\title{On Kummer characters arising from Galois actions on the pro-$p$ fundamental groups of once-punctured CM elliptic curves}
\author{Shun Ishii \\ \href{ishii.shun@keio.jp}{ishii.shun@keio.jp} }
\affil{{\small Department of Mathematics, Keio University, 3-14-1 Hiyoshi, Kouhoku-ku, Yokohama 223-8522, Japan.}}
\date{}
\begin{document}

\maketitle

\begin{abstract}
In this paper, we study certain characters arising from the Galois actions on the pro-$p$ geometric fundamental groups of once-punctured CM elliptic curves. These characters may be regarded as analogues of the Soul\'e characters, which arise from the Galois action associated to the thrice-punctured projective line. We study various arithmetic assumptions under which such characters are nontrivial or even surjective.

\smallskip
\noindent \textbf{Keywords.} once-punctured elliptic curve, complex multiplication, \'etale fundamental group, Galois representation.
\end{abstract}

\tableofcontents

\section{Introduction}\label{1}

In this paper, we study certain characters arising from Galois actions on the pro-$p$ geometric fundamental groups (i.e. the maximal pro-$p$ quotients of the geometric \'etale fundamental groups) of once-punctured CM elliptic curves. 

These characters may be regarded as elliptic analogues of \emph{the Soul\'e characters}, which arise from the Galois action on the pro-$p$ geometric fundamental group $\pi_{1}^{\et}(\mathbb{P}^{1}_{\overline{\mathbb{Q}}} \setminus \{ 0,1,\infty \})^{(p)}$ of the thrice-punctured projective line. 

We briefly recall backgrounds. Let $p$ be an odd prime and $(\zeta_{n})_{n \geq 1}$ a sequence primitive $p^{n}$-th roots of unity in $\overline{\mathbb{Q}}$ satisfying $\zeta_{n+1}^{p}=\zeta_{n}$ for every $n$. For every odd integer $m \geq 3$, \emph{the $m$-th Soul\'e
character} is a $\mathrm{Gal}(\mathbb{Q}(\mu_{p^{\infty}})/\mathbb{Q})$-equivariant character
\[
\kappa_{m} \colon G_{\mathbb{Q}(\mu_{p^{\infty}})}^{\mathrm{ab}} \to \mathbb{Z}_{p}(m)
\] whose reduction modulo $p^{n}$, via Kummer theory, corresponds to $p^{n}$-th roots of  
\[
\prod_{1 \leq a \leq p^{n}, (a,p)=1}(1-\zeta_{n}^{a})^{a^{m-1}}
\]  for every $n \geq 1$. Note that this character depends on the choice of $(\zeta_{n})_{n \geq 1}$ only up to multiplication by an element of $\mathbb{Z}_{p}^{\times}$.

\begin{theorem}[Properties of the Soul\'e characters]\label{thm:fund}\hfill
\begin{enumerate}
\item $\kappa_{m}$ is nontrivial for every odd $m \geq 3$. 
\item $\kappa_{m}$ is surjective for every $m \geq 3$ such that $m \equiv 1 \bmod p-1$.
\item $\kappa_{m}$ is surjective for every odd $m \geq 3$ if and only if Vandiver's conjecture holds for $p$, i.e., the class number of the number field $\mathbb{Q}(\zeta_{p}+\zeta_{p}^{-1})$ is not divisible by $p$.
\end{enumerate}
\end{theorem}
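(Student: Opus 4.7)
The plan is to translate all three assertions into Iwasawa-theoretic statements about the cyclotomic tower $\mathbb{Q}(\mu_{p^\infty})/\mathbb{Q}$, and then to invoke classical results on cyclotomic units and class groups. The underlying observation is that, by Kummer theory, $\kappa_{m}$ is determined by the action of $G_{\mathbb{Q}(\mu_{p^\infty})}^{\mathrm{ab}}$ on a compatible system of $p^{n}$-th roots of the cyclotomic units
$$
c_{n,m} := \prod_{1 \leq a \leq p^{n},\,(a,p)=1}(1-\zeta_{n}^{a})^{a^{m-1}}.
$$
Hence $\kappa_{m}$ factors through the Kummer dual of the $\Lambda$-module $C_\infty$ generated by the $c_{n,m}$, where $\Lambda = \mathbb{Z}_{p}[[\mathrm{Gal}(\mathbb{Q}(\mu_{p^\infty})/\mathbb{Q})]]$, and each of (1)--(3) becomes a statement about the image of this module in Galois cohomology.

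For (1), I would reduce nontriviality to showing that some $c_{n,m}$ is not a $p$-th power in $\mathbb{Q}(\mu_{p^n})^{\times}$ modulo roots of unity. Projecting to the $\omega^{1-m}$-eigenspace and using the standard computation of cyclotomic units along eigencomponents, this reduces to the non-vanishing modulo $p$ of the generalized Bernoulli number $B_{1,\omega^{-m}}$ for odd $m \ge 3$; the non-vanishing follows from the Kummer congruences combined with the analytic class number formula, with the edge cases ($m \equiv 1 \bmod p-1$) handled directly using (2).

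For (2), when $m \equiv 1 \pmod{p-1}$ the character $\omega^{m-1}$ is trivial, so the relevant eigenspace of $C_\infty$ coincides, up to an index prime to $p$, with the $+$-part of the Iwasawa module of global units, which by a classical theorem of Iwasawa is cyclic over $\Lambda$ and has no nontrivial finite $\Lambda$-submodule. Chasing this through the Kummer pairing with $\mathbb{Z}_{p}(m)$ yields surjectivity onto $\mathbb{Z}_{p}(m)$.

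For (3), I would invoke the main conjecture of Iwasawa theory (Mazur--Wiles) together with the reflection principle. Simultaneous surjectivity of $\kappa_{m}$ for all odd $m \ge 3$ is equivalent to the vanishing of the $\omega^{1-m}$-eigenspaces of the inverse limit of $p$-parts of the class groups in the cyclotomic tower for every even index $1-m$; by Kummer's reflection this in turn is equivalent to the vanishing of the corresponding $+$-eigenspaces of the class group of $\mathbb{Q}(\mu_{p})$, which is Vandiver's conjecture. The main obstacle will be in (3): carrying the equivalence across the main conjecture and reflection cleanly, since Mazur--Wiles gives one direction almost immediately, while the converse requires the full strength of the theory of cyclotomic units in the $+$-part together with Iwasawa's result that the relevant eigenspaces of $C_\infty$ have no nonzero pseudo-null submodules.
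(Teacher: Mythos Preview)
The paper does not actually prove this theorem; it is quoted as background and the reader is referred to Ichimura--Sakaguchi, with the remark that the nontriviality rests on two deep inputs --- Soul\'e's finiteness of $H^{2}_{\mathrm{\acute{e}t}}(\mathrm{Spec}(\mathbb{Z}[\tfrac{1}{p}]),\mathbb{Z}_{p}(m))$ for $m\ge 2$, and the Iwasawa main conjecture (Mazur--Wiles) --- while the surjectivity statements rest on the classical index formula relating cyclotomic units to the class number of $\mathbb{Q}(\mu_{p})$.

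Against that backdrop, your plan for (1) has a genuine gap. You reduce nontriviality to the statement that some $c_{n,m}$ is not a $p$-th power, and then claim this follows from $B_{1,\omega^{-m}}\not\equiv 0\bmod p$. But that congruence fails precisely at the irregular indices: by the Kummer congruence $B_{1,\omega^{-m}}\equiv -B_{m}/m\bmod p$, and for an irregular prime there exist odd $m$ with $p\mid B_{m}$. In those eigencomponents the cyclotomic unit \emph{is} a $p$-th power at level one (indeed at every level, up to the relevant twist), so your argument proves surjectivity only in the regular case and says nothing about nontriviality otherwise. Nontriviality for all odd $m\ge 3$ is genuinely a deep statement: one needs to know that the Soul\'e class in $H^{1}_{\mathrm{\acute{e}t}}(\mathbb{Z}[\tfrac{1}{p}],\mathbb{Z}_{p}(m))$ is non-torsion, and this is exactly where Soul\'e's finiteness of $H^{2}$ together with the main conjecture (giving finite index of cyclotomic units among global units in the appropriate $\Lambda$-module sense) enter. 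There is no purely ``Bernoulli mod $p$'' shortcut.

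For (3) your route is more circuitous than necessary and partly misdirected. Since $m$ is odd, $1-m$ is even, so the eigenspaces in question are already on the plus side; no reflection is needed. The clean argument is the one the paper alludes to: the index formula $[O_{\mathbb{Q}(\mu_{p})^{+}}^{\times}:C^{+}]=h^{+}$ shows that the cyclotomic units surject onto every even eigenspace of $E_{1}/E_{1}^{p}$ if and only if $p\nmid h^{+}$, and each such eigenspace is one-dimensional by Dirichlet, so the projected cyclotomic unit generates it (equivalently, $\kappa_{m}\bmod p$ is nonzero) precisely under Vandiver. The main conjecture is not required here.
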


For proofs, see Ichimura-Sakaguchi \cite{IS}. There the nontriviality of the Soul\'e characters are mainly resorted to two ingredients: the finiteness of the cohomology group $H^{2}_{\et}(\mathrm{Spec}(\mathbb{Z}[\frac{1}{p}]), \mathbb{Z}_{p}(m))$ for $m \geq 2$ proved by Soul\'e \cite[page 287, Corollaire]{So3} and Iwasawa main conjecture proved by Mazur-Wiles \cite{MZ}. As for the surjectivity, a key ingredient is a relation between the class number of $\mathbb{Q}(\zeta_{p}+\zeta_{p}^{-1})$ and the index of the group of cyclotomic units inside the group of units.

Let us briefly mention a relationship between the Soul\'e characters and the Galois action on the pro-$p$ geometric fundamental group $\pi_{1}^{\et}(\mathbb{P}^{1}_{\overline{\mathbb{Q}}} \setminus \{ 0,1,\infty \})^{(p)}$. In his pioneering work \cite{Ih1}, Ihara constructed and studied a homomorphism 
\[
\alpha_{0,3} \colon G_{\mathbb{Q}(\mu_{p^{\infty}})}^{\mathrm{ab}} \to \mathbb{Z}_{p}[[T_{1}, T_{2}]] 
\] called \emph{the universal power series for Jacobi sums} in a group-theoretical way from the Galois action on the maximal metabelian quotient of $\pi_{1}^{\et}(\mathbb{P}^{1}_{\overline{\mathbb{Q}}} \setminus \{ 0,1,\infty \})^{(p)}$. The Soul\'e characters are related to this power series in the following way:

\begin{theorem}[Ihara-Kaneko-Yukinari {\cite{IKY}}]
\[
\alpha_{0,3}(\sigma) = \sum_{m \geq 3 \colon \text{odd}} \frac{\kappa_{m}(\sigma)}{1-p^{m-1}} \sum_{i+j=m} \frac{U_{1}^{m_{1}}U_{2}^{m_{2}}}{m_{1}!m_{2}!}.
\] holds for every $\sigma \in G_{\mathbb{Q}(\mu_{p^{\infty}})}$. Here, we regard $\mathbb{Z}_{p}[[T_{1}, T_{2}]]$ as a subring of $\mathbb{Q}_{p}[[U_{1}, U_{2}]]$ via $U_{i} \coloneqq \log(1+T_{i})$ for $i=1, 2$. 
\end{theorem}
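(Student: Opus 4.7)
The plan is to reduce the identification of coefficients to an explicit Kummer-theoretic computation, working inside the maximal meta-abelian pro-$p$ quotient $\pi^{\mathrm{ma}}$ of $\pi_{1}^{\mathrm{\acute{e}t}}(\mathbb{P}^{1}_{\bar{\mathbb{Q}}} \setminus \{ 0,1,\infty \})^{(p)}$. Fix pro-$p$ loops $x_{0}, x_{1}, x_{\infty}$ around the three punctures with $x_{0}x_{1}x_{\infty} = 1$, and let $C \subset \pi^{\mathrm{ma}}$ be the commutator subgroup; since $\pi^{\mathrm{ma}}$ is meta-abelian, $C$ is itself abelian and is naturally a module over the completed group ring $\Lambda := \mathbb{Z}_{p}[[T_{1}, T_{2}]]$ with $T_{i} = x_{i} - 1$. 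By construction, $\alpha_{0,3}(\sigma)$ records the action of $\sigma \in G_{\mathbb{Q}(\mu_{p^{\infty}})}$ on a canonical generator of $C$ (for instance the image of $[x_{0}, x_{1}]$), so the theorem amounts to computing each coefficient of the resulting element of $\Lambda$ in the basis $U_{1}^{m_{1}}U_{2}^{m_{2}}/(m_{1}!m_{2}!)$.

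First I would specialize the meta-abelian Galois action at cyclotomic points. The finite quotient of $\pi^{\mathrm{ma}}$ mod $p^{n}$ corresponds to the tower of covers of $\mathbb{P}^{1} \setminus \{0,1,\infty\}$ obtained by adjoining $p^{n}$-th roots of $z$ and $1-z$; evaluating the canonical Kummer class of this tower at $z = \zeta_{n}^{a}$ for $1 \leq a \leq p^{n}$ with $(a,p)=1$ yields precisely the Kummer cocycle associated to the cyclotomic unit $1 - \zeta_{n}^{a}$. Secondly, the change of variables $U_{i} = \log(1+T_{i})$ diagonalizes the weight action of $\mathrm{Gal}(\mathbb{Q}(\mu_{p^{\infty}})/\mathbb{Q})$ on $\Lambda$, so each monomial $U_{1}^{m_{1}}U_{2}^{m_{2}}$ is a Tate-twist-pure vector of weight $m = m_{1}+m_{2}$, and the weight-$m$ part of $\alpha_{0,3}(\sigma)$ is forced to be a single cyclotomic-equivariant character of weight $m$. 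Matching the two descriptions, and tracking the weighting $a^{m-1}$ that arises when summing the specialized Kummer cocycles over $a$ (from expanding $(1+T_{i})^{a}$ in divided powers), reproduces the definition of $\kappa_{m}(\sigma)$; the factor $(1 - p^{m-1})^{-1}$ emerges as an Euler factor at $p$ when inclusion--exclusion is used to restrict the sum to $(a,p) = 1$ and one passes to the limit $n \to \infty$.

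The main obstacle will be the precise identification of Ihara's canonical generator of $C/[C,C]$ with the assembled Kummer class at the $\zeta_{n}^{a}$: one needs a careful compatibility between the tangential base point used to rigidify $x_{0}, x_{1}, x_{\infty}$ and the \'etale paths from this base point to each cyclotomic point. A secondary technical point is the restriction to odd $m$, which follows once one verifies that complex conjugation acts by $-1$ on $\mathbb{Z}_{p}(1)$, forcing the even-weight coefficients to vanish by the $\mathrm{Gal}(\mathbb{Q}(\mu_{p^{\infty}})/\mathbb{Q})$-equivariance present in $\alpha_{0,3}$. Once these two bookkeeping issues are settled, the formula follows by comparing coefficients weight by weight in the natural filtration on $\Lambda$.
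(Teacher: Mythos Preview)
The paper does not contain a proof of this statement: it is quoted in the introduction as a background result of Ihara--Kaneko--Yukinari, with a bare citation to \cite{IKY} and no argument supplied. There is therefore no ``paper's own proof'' to compare your proposal against.

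As for the proposal itself, the broad shape is in line with the original IKY argument: one does work in the meta-abelian quotient, interpret the commutator as a $\mathbb{Z}_{p}[[T_{1},T_{2}]]$-module via Fox free differential calculus (equivalently the Magnus embedding), and identify the coefficients with Kummer characters of cyclotomic $p$-units; the factor $(1-p^{m-1})^{-1}$ does arise from stripping the Euler factor at $p$. Two points deserve more care than you give them. First, your explanation of why only odd $m$ occur is not quite right: complex conjugation is not in $G_{\mathbb{Q}(\mu_{p^{\infty}})}$, so you cannot simply invoke its action on the values of $\alpha_{0,3}$; the vanishing for even $m$ comes instead from the functional equation built into Ihara's construction (equivalently, the invariance of the relevant Kummer data under $a\mapsto -a$), which kills the even-weight contributions. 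Second, the ``specialization at $z=\zeta_{n}^{a}$'' step is more delicate than a one-line evaluation: one must actually compute the Galois action on the abelianized commutator via the Blanchfield--Lyndon description and match it against the explicit cocycle, which is where the bulk of the work in \cite{IKY} lies. Your sketch names the right ingredients but does not really carry out this identification.
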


For more discussions of the Galois action on the pro-$p$ geometric fundamental group of $\mathbb{P}^{1}_{\mathbb{Q}} \setminus \{ 0,1,\infty \}$, see \cite{Ih6}, for example. 

\medskip

We move on to the case of once-punctured elliptic curves. In \cite{Na2}, Nakamura studied Galois actions on the pro-$p$ geometric fundamental group $\pi_{1}^{\et}(E_{\overline{K}} \setminus O)^{(p)}$ of a once-punctured elliptic curve $E \setminus O$ over a number field $K$ and defined an analogue of Ihara's power series
\[
\alpha_{1,1} \colon G_{K(E[p^{\infty}])}^{\mathrm{ab}} \to \mathbb{Z}_{p}[[T_{p}(E)]](1)
\] which is denoted by $\alpha$ in the original paper. Moreover, he found explicit Kummer-theoretic characterizations of coefficients of his power series. 

To state his result, we fix a basis $\omega_{1}=(\omega_{1,n})_{n\geq 1}$ and $\omega_{2}=(\omega_{2,n})_{n \geq 1}$ of $T_{p}(E)=\varprojlim_{n} E[p^{n}](\overline{\mathbb{Q}})$. We identify $\mathbb{Z}_{p}[[T_{p}(E)]]$ with $\mathbb{Z}_{p}[[T_{1}, T_{2}]]$ by $T_{i} =\omega_{i}-1$. We define the elliptic Soul\'e characters as follows:

\begin{definition}[cf. {\cite[(3.11.3)]{Na2}}]\label{ellSoule}
  For $\boldsymbol{m}=(m_{1}, m_{2}) \in \mathbb{Z}^{2}_{\geq 1}$ such that $m_{1}+m_{2}$ is even, 
  we define \emph{the $\boldsymbol{m}$-th elliptic Soul\'e character}
  \[
  \kappa_{\boldsymbol{m}} \colon G_{K(E[p^{\infty}])}^{\rm{ab}} \to \mathbb{Z}_{p}
  \] to be the character whose reduction modulo $p^{n}$ corresponds to 
  \[
    \epsilon_{(m_{1}, m_{2}), n} \coloneqq \prod_{\substack{0 \leq a,b <p^{n} \\ p \nmid \gcd(a,b)}}
\left(
\prod_{k=0}^{n} \theta_{p}(a\omega_{1, n+1+k}+b\omega_{2, n+1+k})^{-p^{2k}}
\right)^{a^{m_{1}-1}b^{m_{2}-1}}
\] where $\theta_{p}$ is a rational function on the curve $E/K$ defined in Section \ref{2.1}, via Kummer theory. Here, note that we use the convention $0^{0} \coloneqq 1$.   
\end{definition}

\begin{theorem}[Nakamura  {\cite[Theorem (A)]{Na2}}]\label{thm:Nakamura}
\[
\alpha_{1,1}(\sigma)=\sum_{m \geq 2 \colon \text{even}}^{\infty}\frac{1}{1-p^{m}}\sum_{\substack{(m_{1}, m_{2}) \in \mathbb{Z}^{2}_{\geq 0} \\ m_{1}+m_{2}=m}}\kappa_{(m_{1}+1, m_{2}+1)}(\sigma)\frac{U_{1}^{m_{1}}U_{2}^{m_{2}}}{m_{1}! m_{2}!}
\] holds for every $\sigma \in G_{K(E[p^{\infty}])}$. Here, we regard $\mathbb{Z}_{p}[[T_{1}, T_{2}]]$ as a subring of $\mathbb{Q}_{p}[[U_{1}, U_{2}]]$ via $U_{i}=\log(1+T_{i})$ for $i=1,2$.
\end{theorem}

We have important remarks regarding the elliptic Soul\'e characters:

\begin{remark}
  (1) As in the case of the Soul\'e characters, the elliptic Soul\'e characters does depend on the choices of bases of $T_{\p}(E)$ and $T_{\bar{\p}}(E)$, only up to multiplication by $\mathbb{Z}_{p}^{\times}$ by definition. Since we work with the fixed basis $\{ \omega_{1}, \omega_{2} \}$, we do not mention this dependence hereafter.

  (2) Theorem \ref{thm:Nakamura} was originally stated via a certain basis of $T_{p}(E)$ which comes from a free basis of $\pi_{1}^{\mathrm{top}}(E(\mathbb{C}) \setminus O, \vec{0})$ where $\vec{0}$ is a tangential basepoint at the origin. By using such a topological basis, one can simplify Definition \ref{ellSoule} by using the fundamental theta function $\theta(z, \mathcal{L})$ associated to a lattice $\mathcal{L}$ for $E(\mathbb{C})$ \cite[\textsection 2]{Na2}. However, the proof given in \cite{Na2} works for every basis. 
\end{remark}

While properties of the Soul\'e characters are well understood, the fundamental properties of the elliptic Soul\'e characters, such as the surjectivity or even the nontriviality, are largely unknown. As a partial result, Nakamura \cite[(3.12)]{Na2} proved that certain linear combinations of them are nontrivial. 

In this paper, we consider the elliptic Soul\'e characters arising from CM elliptic curves over imaginary quadratic fields, and study the nontriviality and the surjectivity of them. 

\medskip

Let $K$ be an imaginary quadratic field of class number one and $(E, O)$ an elliptic curve over $K$ which has complex multiplication by the ring of integers $O_{K}$ of $K$\footnote{Note that the isomorphism class of $E$ over $\overline{\mathbb{Q}}$ does not depend on the choice of an elliptic curve with CM by $O_{K}$. Hence, geometrically, we consider a single elliptic curve for each $K$.}.

Let $p \geq 5$ be a prime such that $E$ has potentially good ordinary reduction at primes above $p$. The ideal $(p) \subset O_{K}$ splits into a product of two primes $\p=(\pi)$ and its complex conjugate $\bar{\p}=(\bar{\pi})$ where $p=\pi \bar{\pi}$. The Tate module $T_{p}(E)$ also splits into the direct sum of \[
  T_{\p}(E) \coloneqq \varprojlim_{n} E[\p^{n}](\overline{\mathbb{Q}}) \quad \text{and} \quad T_{\p}(E) \coloneqq \varprojlim_{n} E[\p^{n}](\overline{\mathbb{Q}})
  \] and we have the corresponding characters 
\[
\chi_{1} \colon G_{K} \to \mathrm{Aut}(T_{\p}(E)) \cong \mathbb{Z}_{p}^{\times} \quad \text{and} \quad
\chi_{2} \colon G_{K} \to \mathrm{Aut}(T_{\bar{\p}}(E)) \cong \mathbb{Z}_{p}^{\times}.
\] 

We fix a basis $\omega_{1}=(\omega_{1,n})_{n \geq 1}$ and $\omega_{2}=(\omega_{2,n})_{n \geq 1}$ of $T_{p}(E)$ so that $\omega_{1}$ (resp. $\omega_{2}$) generates $T_{\p}(E)$ (resp. $T_{\bar{\p}}(E)$). 

\medskip

Since the homomorphism $\alpha_{1,1}$ is equivariant and the space of degree $m$ homogeneous polynomials in $\mathbb{Q}[[T_{1}, U_{2}]]$ is naturally isomorphic $\mathrm{Sym}^{m}\left( T_{p}(E) \otimes \mathbb{Q}_{p} \right)$, the character $\kappa_{\boldsymbol{m}}$ gives a $\mathrm{Gal}(K(E[p^{\infty}])/K)$-equivariant homomorphism
\[
\kappa_{\boldsymbol{m}} \colon G_{K(E[p^{\infty}])}^{\mathrm{ab}} \to \mathbb{Z}_{p}(\boldsymbol{m}) \coloneqq \mathbb{Z}_{p}(\chi_{1}^{m_{1}}\chi_{2}^{m_{2}}).
\]

The next theorem gives an analogue of Theorem \ref{thm:fund}: in the following, we denote the ray class field of $K$ modulo $p$ (resp. $\p$) by $K(p)$ (resp. $K(\p)$).

\begin{theorem}[Properties of the elliptic Soul\'e characters, Theorem \ref{non_vanishing} and Sections \ref{5.1}-\ref{5.4}]\label{thm:main}

Let $I \coloneqq \{ \boldsymbol{m}=(m_{1}, m_{2}) \in \mathbb{Z}_{ \geq 1}^{2} \setminus \{ (1,1) \} \mid m_{1} \equiv m_{2} \bmod \lvert O_{K}^{\times} \rvert \}$ and $\boldsymbol{m} \in I$.
\begin{enumerate}

\item $\kappa_{\boldsymbol{m}}$ is nontrivial if $H^{2}_{\et}(\mathrm{Spec}(O_{K}[\frac{1}{p}]), \mathbb{Z}_{p}(\boldsymbol{m}))$ is finite.

\item $\kappa_{\boldsymbol{m}}$ is not surjective if $\boldsymbol{m} \geq (2,2)$ and $\boldsymbol{m} \equiv (1,1) \mod p-1$. 

\item $\kappa_{\boldsymbol{m}}$ is surjective for every $\boldsymbol{m} \in I$ such that $\boldsymbol{m} \geq (2,2)$ and  $\boldsymbol{m} \not \equiv (1,1) \bmod p-1$ if and only if the class number of $K(p)$ is not divisible by $p$ and there exists a unique prime of $K(p)$ above $\p$.

\item $\kappa_{(m,1)}$ (resp. $\kappa_{(1,m)}$) is surjective if $m \in \mathbb{Z}_{>1}$ satisfies  $m \equiv 1 \bmod p-1$. 

\item $\kappa_{(m,1)}$ (resp. $\kappa_{(1,m)}$) is surjective for every $(m,1) \in I$ (resp. $(1,m) \in I$) if and only if the class number of $K(\p)$ (resp. $K(\bar{\p})$) is not divisible by $p$.
\end{enumerate}
\end{theorem}

\begin{remark}
(1) The finiteness of $H^{2}_{\et}$ in Theorem \ref{thm:main} (1) is a special case of a conjecture of Jannsen \cite[Conjecture 1]{Ja}. In Section \ref{4.2}, we discuss some assumptions under which this finiteness holds.

(2) Kucuksakalli \cite[Theorem 2]{Ku} proved that the class number of $K(\p)$ is divisible by $p$ when $K=\mathbb{Q}(\sqrt{-163})$ and $\p$ is a prime above $p=307$. In particular, some elliptic Soul\'e characters of the form $\kappa_{(m,1)}$ are not surjective.
\end{remark}

This paper is organized as follows. In Section \ref{2}, we discuss certain properties of elliptic units which are frequently used in the following sections. In Section \ref{3}, we modify elliptic Soul\'e characters to obtain certain cocycles coming from norm compatible systems of elliptic units in Section \ref{4}. Such a construction originates from Soul\'e \cite{So1} \cite{So2} and is also studied by Kings \cite[2.2.1]{Ki3} in a similar situation. In Section \ref{4.1}, we prove Theorem \ref{thm:main} (1) by adopting an argument of Kings \cite[Proposition 5.2.5]{Ki3}. In Section \ref{4.2}, we discuss various conditions which imply the finiteness of $H^{2}_{\et}$. In Section \ref{5}, using a relation between the class numbers of ray class fields of $K$ and elliptic units, we prove (2)-(5) of Theorem \ref{thm:main}.

\section*{Notations}
\hfill

In the rest of this paper, $K$ is an imaginary quadratic field of class number one and $(E, O)$ is an elliptic curve over $K$ which has complex multiplication by $O_{K}$ as in Section \ref{1}.

Moreover, $p \geq 5$ is a prime such that $E$ has potentially good ordinary reduction at primes. The ideal $(p)$ splits into a product of $\p=(\pi)$ and its complex conjugate $\bar{\p}=(\bar{\pi})$ where $p=\pi \bar{\pi}$. We fix a basis $\omega_{1}=(\omega_{1,n})_{n \geq 1}$ of $T_{\p}(E)$ and $\omega_{2}=(\omega_{2,n})_{n \geq 1}$ of $T_{\bar{\p}}(E)$. We have the corresponding characters
\[
\chi_{1} \colon G_{K} \to \mathrm{Aut}(T_{\p}(E)) \cong \mathbb{Z}_{p}^{\times} \quad \text{and} \quad
\chi_{2} \colon G_{K} \to \mathrm{Aut}(T_{\bar{\p}}(E)) \cong \mathbb{Z}_{p}^{\times}.
\] 

\medskip

{\bf Indexes.} For $\boldsymbol{m}=(m_{1}, m_{2}) \in \mathbb{Z}^{2}$ and an integer $n$, we write $\boldsymbol{m} \equiv 0 \bmod n$ if $\boldsymbol{m} \in n\mathbb{Z}^{2}$. For any two pairs $\boldsymbol{m}=(m_{1}, m_{2})$ and $\boldsymbol{n}=(n_{1}, n_{2})$, we write $\boldsymbol{m} \geq \boldsymbol{n}$ if $m_{i} \geq n_{i}$ holds for $i=1,2$. Similarly, we write $\boldsymbol{m}>\boldsymbol{n}$ if both $\boldsymbol{m} \geq \boldsymbol{n}$ and $\boldsymbol{m} \neq \boldsymbol{n}$ hold. In the following, we denote $(1,1)$ by~$\boldsymbol{1}$. 

\medskip

{\bf Profinite Groups.} For a profinite group $G$, we denote the maximal abelian quotient of $G$ by $G^{\rm{ab}}$. Moreover, we denote the maximal pro-$p$ quotient of $G$ by $G^{(p)}$. 

\medskip

{\bf Number Fields. } We fix an algebraic closure $\overline{\mathbb{Q}}$ of $\mathbb{Q}$ and every number field is considered to be a subfield of $\overline{\mathbb{Q}}$. For a number field $F$, we denote the absolute Galois group $\mathrm{Gal}(\overline{\mathbb{Q}}/F)$ of $F$ by $G_{F}$, the ring of integers of $F$ by $O_{F}$ and the group of roots of unity in $F$ by $\mu(F)$. For an integer $m \geq 1$, we denote the group of $m$-th roots of unity in $\overline{\mathbb{Q}}$ by $\mu_{m}$.

For a set $S$ of nonarchimedean places of $F$, we denote the ring of $S$-integers of $F$ by $O_{F,S}$. For a nonarchimedean place $v$ of $F$, we denote the $v$-adic completions of $K$ (resp. $O_{F}$) by $F_{v}$ (resp. $O_{F_{v}}$). 

For a subfield $F$ of $\overline{\mathbb{Q}}$, the set of primes of $F$ above $p$ (resp. $\p$, $\bar{\p}$, if $F$ contains $K$) is denoted by $S_{p}(F)$ (resp. $S_{\p}(F)$, $S_{\bar{\p}}(F)$). Moreover, we denote the maximal pro-$p$ extension of $F$ unramified outside $S_{p}(F)$ (resp. $S_{\p}(F)$, $S_{\bar{\p}}(F)$) by $F_{S_{p}}(p)$ (resp. $F_{S_{\p}}(p)$, $F_{S_{\bar{\p}}}(p)$). We drop $F$ from e.g. $S_{p}(F)$ if the field is clear from context.

\medskip

{\bf Imaginary Quadratic Fields.} We denote the order of $O_{K}^{\times}$ by $w_{K}$. For a nonzero ideal $\mathfrak{m}$ of $O_{K}$, we denote the order of $O_{K}/\mathfrak{m}$ by $N\mathfrak{m}$ and the ray class field of $K$ modulo $\mathfrak{m}$ by $K(\mathfrak{m})$. We write $K(\mathfrak{m}^{\infty}) \coloneqq \cup_{n \geq 1} K(\mathfrak{m}^{n})$. Moreover, for a nonzero ideal $\mathfrak{a}$ of $O_{K}$ relatively prime to $\mathfrak{m}$, we denote the Artin symbol $(\mathfrak{a}, K(\mathfrak{m})/K) \in \mathrm{Gal}(K(\mathfrak{m})/K)$ by $\sigma_{\mathfrak{a}}$.

Since we assume $p$ splits in $K$, the natural inclusions $ \mathbb{Z}_{p} \hookrightarrow O_{K_{\p}}$ and $\mathbb{Z}_{p} \hookrightarrow O_{K_{\bar{\p}}}$ are isomorphisms. Hence we have two ring homomorphisms 
\[i_{1} \colon O_{K} \hookrightarrow O_{K_{\p}} \xrightarrow{\sim} \mathbb{Z}_{p} \quad \text{and} \quad 
i_{2} \colon O_{K} \hookrightarrow O_{K_{\bar{\p}}} \xrightarrow{\sim} \mathbb{Z}_{p}.
\] For $\boldsymbol{m}=(m_{1}, m_{2}) \in \mathbb{Z}_{\geq 0}^{2}$, we denote $i_{1}^{m_{1}}i_{2}^{m_{2}} \colon O_{K} \to \mathbb{Z}_{p}$ by $i^{\boldsymbol{m}}$.

\medskip
 
{\bf CM Elliptic Curves.} For $\alpha \in O_{K}$, we shall write the corresponding endomorphism of $E$ by $[\alpha]$. For an ideal $\mathfrak{m}$ of $O_{K}$, the set of $\overline{\mathbb{Q}}$-points of the $\mathfrak{m}$-torsion subgroup scheme $E[\mathfrak{m}]$ determines an injective homomorphism
\[
\mathrm{Gal}(K(E[\mathfrak{m}])/K) \hookrightarrow \mathrm{Aut}(E[\mathfrak{m}](\overline{\mathbb{Q}})) \cong (O_{K}/\mathfrak{m})^{\times}.
\] and this homomorphism induces an isomorphism
\[
\mathrm{Gal}(K(\mathfrak{m})/K) \xrightarrow{\sim} (O_{K}/\mathfrak{m})^{\times}/\mathrm{im}(O_{K}^{\times})
\] which does not depend on the choice of $E$. 

\medskip

For $\boldsymbol{m}=(m_{1}, m_{2}) \in \mathbb{Z}^{2}$, we write
\[
  \chi^{\boldsymbol{m}} \coloneqq \chi_{1}^{m_{1}}\chi_{2}^{m_{2}} \colon G_{K} \to \mathbb{Z}_{p}^{\times}.
\] Note that, if $ m_{1} \equiv m_{2} \bmod w_{K}$, then $\chi^{\boldsymbol{m}}$ factors through $\mathrm{Gal}(K(E[p^{\infty}])/K) \to \mathrm{Gal}(K(p^{\infty})/K)$. 

For a $\mathbb{Z}_{p}$-module $M$ on which $G_{K}$ acts, we denote the $\chi^{\boldsymbol{m}}$-twist of $M$ by $M(\boldsymbol{m})$. We identify $T_{\p}(E)$  with $\mathbb{Z}_{p}(1,0)$ and $T_{\bar{\p}}$ with $\mathbb{Z}_{p}(0,1)$ via our chosen basis, respectively. Note that $\mathbb{Z}_{p}(m,m)$ is simply the $m$-th Tate twist since $\chi^{\boldsymbol{1}}=\chi_{1}\chi_{2}$ is the $p$-adic cyclotomic character.

\section{Preliminaries on elliptic units}\label{2}

In this section, we define a certain rational function $\theta_{\mathfrak{a}}$ on $E$ associated to a nontrivial ideal $\mathfrak{a}$ of $O_{K}$ and study arithmetic properties of its special values evaluated at torsion points of $E$. The function $\theta_{\mathfrak{a}}$ is also a quotient of the so-called fundamental theta function, cf. de Shalit \cite[Chapter II]{dS}. Our main references are de Shalit \cite{dS} and Rubin \cite{Ru1} \cite{Ru2}. 

\subsection{The function $\theta_{\mathfrak{a}}$}\label{2.1}

\begin{definition}[{\cite[Chapter $\rm{I\hspace{-.01em}I}$ \textsection 2, 2.3]{dS}}]
Let $\mathfrak{a}$ be a nontrivial ideal of $O_{K}$. We define a rational function $\theta_{\mathfrak{a}}$ on $E$ by
\[
\theta_{\mathfrak{a}} = \alpha^{-12}\Delta(E)^{N\mathfrak{a}-1}\prod_{\nu \in E[\mathfrak{a}]\setminus O}(x-x(\nu))^{-6}.
\]
Here, $\alpha$ is a generator of $\mathfrak{a}$, and $\Delta(E)$ is the discriminant of the chosen Weierstrass model of $E$. Moreover, $x \colon E \to \mathbb{P}^{1}$ is a finite morphism of degree two corresponding to the $x$-coordinate of $E$.
\end{definition}

The function $\mathfrak{a}$ is defined over $K$ with $\mathrm{div}(\theta_{\mathfrak{a}})=12(N\mathfrak{a}[O]-\sum_{P \in E[\mathfrak{a}]}[P])$. Moreover, $\theta_{\mathfrak{a}}$ is invariant under the action of $\mathrm{Aut}(E)$. The following two propositions are frequently used in this section:

\begin{proposition}[The distribution relation {\cite[Chapter $\rm{I\hspace{-.01em}I}$ \textsection 2, 2.3 Proposition]{dS}}]\label{distribution_relation}
Let $\mathfrak{a}$ and $\mathfrak{b}$ be nontrivial ideals of $O_{K}$ relatively prime to each other, and let $\beta$ be a generator of $\mathfrak{b}$. Then we have
\[
\prod_{\nu \in E[\mathfrak{b}]} \theta_{\mathfrak{a}}(\tau+\nu)=\theta_{\mathfrak{a}}(\beta \tau).
\] 
\end{proposition}

\begin{proposition}[{\cite[Chapter $\rm{I\hspace{-.01em}I}$ \textsection 2, 2.4 Proposition]{dS}}, see also {\cite[Theorem 7.4]{Ru1}}]\label{elliptic_unit}
Let $\mathfrak{m}$ and $\mathfrak{a}$ be nontrivial ideals of $O_{K}$ relatively prime to each other, and $\tau$ a primitive $\mathfrak{m}$-torsion point of $E$. Then, the following assertions hold:
\begin{enumerate}
\item $\theta_{\mathfrak{a}}(\tau) \in K(\mathfrak{m})$.
\item For every ideal $\mathfrak{c}=(c)$ of $O_{K}$ relatively prime to $\mathfrak{m}$, we have
\[
\theta_{\mathfrak{a}}(\tau)^{\sigma_{\mathfrak{c}}}=\theta_{\mathfrak{a}}(c\tau)=\theta_{\mathfrak{a}\mathfrak{c}}(\tau)\theta_{\mathfrak{c}}(\tau)^{-N\mathfrak{a}}.
\]
\item $\theta_{\mathfrak{a}}(\tau)$ is a unit unless $\mathfrak{m}$ is a power of a maximal ideal. If $\mathfrak{m}$ is a power of a maximal ideal $\mathfrak{l}$, then $\theta_{\mathfrak{a}}(\tau)$ is an $\mathfrak{l}$-unit.
\end{enumerate}
\end{proposition}

\begin{remark}\label{pthpower}
  Let $\tau \in \left( E[p^{\infty}]\setminus E[p] \right)(\overline{\mathbb{Q}})$. Then $\theta_{p}(\tau)$ is a $p$-unit in $K(p^{\infty})$. The proof of Proposition \ref{elliptic_unit} (3) given in \cite[Theorem 7.4]{Ru1} works in this case.  
\end{remark}

\begin{lemma}\label{p_to_a}
Let $\mathfrak{a}$ and $\mathfrak{b}$ be nontrivial ideals of $O_{K}$ relatively prime to each other and take generators of $a, b \in O_{K}$ of $\mathfrak{a}$ and $\mathfrak{b}$, respectively. Then, as rational functions on $E$, we have
\[
\frac{\theta_{\mathfrak{b}}^{N\mathfrak{a}}}{\theta_{\mathfrak{b}} \circ [a]} = \frac{\theta_{\mathfrak{a}}^{N\mathfrak{b}}}{\theta_{\mathfrak{a}} \circ [b]}.
\]
\end{lemma}
\begin{proof} 
Since the divisors associated to both sides coincide, the two functions are equal up to a multiple of $K^{\times}$. Moreover, for an arbitrary primitive $\mathfrak{m}$-torsion point $\tau$ of $E$ where $\mathfrak{m}$ is prime to $\mathfrak{a}\mathfrak{b}$, 
\[
\theta_{\mathfrak{a}}(b\tau)\theta_{\mathfrak{b}}(\tau)^{N\mathfrak{a}}
=
\theta_{\mathfrak{b}}(a\tau)\theta_{\mathfrak{a}}(\tau)^{N\mathfrak{b}}
\] holds by Proposition \ref{elliptic_unit}. This concludes the proof.
\end{proof}

By using Proposition \ref{distribution_relation} and Proposition \ref{elliptic_unit}, one can compute norms of special values of $\theta_{\mathfrak{a}}$ as follows.

\begin{proposition}[{\cite[Chapter $\rm{I\hspace{-.01em}I}$ \textsection 2, 2.5 Proposition $(\mathrm{i})$]{dS}}]\label{elliptic_norm}
Let $\mathfrak{f}$ be a nontrivial ideal of $O_{K}$, $\mathfrak{l}=(l)$ a maximal ideal of $O_{K}$, $\mathfrak{a}$ a nontrivial ideal of $O_{K}$ relatively prime to $\mathfrak{g} \coloneqq \mathfrak{f}\mathfrak{l}$ and $\tau$ a primitive $\mathfrak{g}$-torsion point of $E$. If we denote the number of roots of unity in $K$ congruent to $1 \bmod \mathfrak{f}$ (resp. $\mathfrak{g}$) by $w_{\mathfrak{f}}$ (resp. $w_{\mathfrak{g}}$) and set $e \coloneqq \frac{w_{\mathfrak{f}}}{w_{\mathfrak{g}}}$, we have
\begin{eqnarray*}
N_{K(\mathfrak{g})/K(\mathfrak{f})}(\theta_{\mathfrak{a}}(\tau)^{e}) = 
\begin{cases}
\theta_{\mathfrak{a}}(l\tau)^{1-\sigma_{\mathfrak{l}}^{-1}}  & (\text{if } \mathfrak{l} \nmid \mathfrak{f}) \\
\theta_{\mathfrak{a}}(l\tau) & (\text{if } \mathfrak{l} \mid \mathfrak{f})
\end{cases}.
\end{eqnarray*}
\end{proposition}


\subsection{The group of elliptic units}\label{2.2}

In this subsection, we briefly recall Rubin's group of elliptic units. A useful reference is given by Schmitt \cite{Sch}, which carefully compares the three different kinds of elliptic units (defined by Rubin, de Shalit and Yager, respectively).

\begin{definition}[{\cite[Definition 2.1]{Sch}}, see also {\cite[\textsection 1]{Ru2}}]\label{elliptic_unit_R}
Let $F$ be a finite abelian extension of $K$.
\begin{enumerate}
\item We define a subgroup $C_{F}$ of $O_{F}^{\times}$ to be a subgroup generated by 
\[
N_{FK(\mathfrak{m})/F}(\theta_{\mathfrak{a}}(\tau))^{\sigma-1},
\] where $\mathfrak{m}$ ranges over the ideals of $O_{K}$ such that $O_{K}^{\times} \to O_{K}/\mathfrak{m}$ is injective, $\tau$ over the primitive $\mathfrak{m}$-torsion points of $E$, $\mathfrak{a}$ over the ideals of $O_{K}$ with $(\mathfrak{a}, 6\mathfrak{m})=1$ and $\sigma$ over $\mathrm{Gal}(F/K)$.  Note that $C_{F}$ is a  $\mathrm{Gal}(F/K)$-submodule of $O_{F}^{\times}$.
\item We define a subgroup $C(F)$ of $O_{F}^{\times}$, \emph{the group of elliptic units of $F$}, by
\[
C(F) \coloneqq \mu(F)C_{F}.
\]
\item We define a group $\mathcal{C}$ by the projective limit
\[
\mathcal{C} \coloneqq \varprojlim_{n} C(K(p^{n})) \otimes \mathbb{Z}_{p},
\] where the transition map $C(K(p^{n+1})) \otimes \mathbb{Z}_{p} \to C(K(p^{n})) \otimes \mathbb{Z}_{p} $ is induced by the norm map from $K(p^{n+1})^{\times}$ to $K(p^{n})^{\times}$ for every $n$.
\end{enumerate}
\end{definition}

\begin{remark}
In fact, the definitions appearing in \cite{Sch} \cite{Ru2} are slightly different from the definitions given above, since in \cite{Sch} \cite{Ru2} they  consider suitable $12$-th roots of elliptic units. However, since we assume $p \geq 5$, the resulting groups become the same upon taking tensor products with $\mathbb{Z}_{p}$.
\end{remark}

The following theorem relates the group of elliptic units $C(F)$ with the $p$-part of the class number of $F$. This theorem is essential in Section \ref{5}.

\begin{theorem}[{\cite[Theorem 1.3]{Ru2}}]\label{Rubin_cl}
Let $F$ be an abelian extension of $K$, and assume that the degree $[F \colon K]$ is prime to $p$. Then the $p$-part of the index $[O_{F}^{\times} \colon C(F)]$ is equal to the $p$-part of the class number of $F$.
\end{theorem}

First, we prove that one can impose more conditions on $\mathfrak{m}$ appearing in the definition of the group of elliptic units:

\begin{lemma}\label{useful_lemma}
Let $n \geq 1$ be an integer. The group $C_{K(p^{n})}$ is generated by 
\[
\theta_{\mathfrak{a}}(\tau)^{\sigma-1},
\] where $\mathfrak{m}$ ranges over ideals of the form $\p^{m_{1}}\bar{\p}^{m_{2}}$ for some $(m_{1}, m_{2}) \in \mathbb{Z}^{2}_{\geq 0} \setminus \{ (0,0) \}$ such that $m_{i} \leq n$ $(i=1,2)$, $\tau$ over the primitive $\mathfrak{m}$-torsion points of $E$, $\mathfrak{a}$ over the ideals of $O_{K}$ with $(\mathfrak{a}, 6\mathfrak{m})=1$ and $\sigma$ over $\mathrm{Gal}(K(p^{n})/K)$.
\end{lemma}

\begin{proof}
Let $\mathfrak{n}$ be an arbitrary ideal of $O_{K}$ such that $O_{K}^{\times} \to O_{K}/\mathfrak{n}$ is injective. Write $\mathfrak{n} = \p^{m_{1}}\bar{\p}^{m_{2}} \mathfrak{m}$ for an ideal $\mathfrak{m}$ with $(\mathfrak{m},p)=1$. Let $\tau$ be a primitive $\mathfrak{n}$-torsion point of $E$ and $\mathfrak{a}$ an arbitrary ideal with $(\mathfrak{a}, 6\mathfrak{n})=1$.  

First, by the equality
\[
K(p^{n}) \cap K(\mathfrak{n})= K(\p^{\min(n, m_{1})}\bar{\p}^{\min(n, m_{2})}),
\] there is a natural isomorphism
\[
\mathrm{Gal}(K(p^{n})K(\mathfrak{n})/K(p^{n})) \xrightarrow{\sim} \mathrm{Gal}(K(\mathfrak{n})/K(\p^{\min(n, m_{1})}\bar{\p}^{\min(n, m_{2})})).
\] Since $\theta_{\mathfrak{a}}(\tau)$ is contained in $K(\mathfrak{n})$, we have
\[
N_{K(p^{n})K(\mathfrak{n})/K(p^{n})} \theta_{\mathfrak{a}}(\tau)
= N_{K(\mathfrak{n})/K(\p^{\min(n, m_{1})}\bar{\p}^{\min(n, m_{2})})} \theta_{\mathfrak{a}}(\tau). \tag{$\ast$}
\] 
If $(m_{1}, m_{2})=(0,0)$, then ($\ast$) is contained in $K$ and taking the action of $\sigma-1$ for $\sigma \in \mathrm{Gal}(K(p^{n})/K)$ yields $1$.

Hence we may assume  $(m_{1}, m_{2}) \neq (0,0)$. By Proposition \ref{elliptic_norm}, ($\ast$) is contained in a Galois submodule generated by $\theta_{\mathfrak{a}}(\pi^{m_{1}-\min(n, m_{1})}\bar{\pi}^{m_{2}-\min(n,m_{2})} \alpha \tau)$ where $\alpha$ is a generator of $\mathfrak{m}$. The claim follows since $\pi^{m_{1}-\min(n, m_{1})}\bar{\pi}^{m_{2}-\min(n,m_{2})} \alpha \tau$ is a primitive $\p^{\min(n, m_{1})}\bar{\p}^{\min(n, m_{2})}$-torsion point of $E$. 
\end{proof}

Recall that we fixed a basis $\{\omega_{1}, \omega_{2} \}$ of $T_{p}(E)$ such that $\omega_{1}$ (resp. $\omega_{2}$) generates $T_{\p}(E)$ (resp. $T_{\bar{\p}}(E)$).

\begin{proposition}\label{useful_prop}
Let $n \geq 1$ be an integer. The group $C_{K(p^{n})}$ is generated by 
\[
\theta_{\mathfrak{a}}(\omega_{1,n}+\omega_{2,n})^{\sigma-1}, \theta_{\mathfrak{a}}(\omega_{1,n})^{\sigma-1} \quad \text{and} \quad \theta_{\mathfrak{a}}(\omega_{2,n})^{\sigma-1}
\] where $\sigma$ ranges over $\mathrm{Gal}(K(p^{n})/K)$ and $\mathfrak{a}$ over the ideals of $O_{K}$ with $(\mathfrak{a}, 6p)=1$, $(\mathfrak{a}, 6\p)=1$ and $(\mathfrak{a}, 6\bar{\p})=1$, respectively.
\end{proposition}
\begin{proof}

We consider an arbitrary element of the form $\theta_{\mathfrak{a}}(\tau)$, where $\mathfrak{a}$ is an ideal of $O_{K}$ with $(\mathfrak{a}, 6\p^{m_{1}}\bar{\p}^{m_{2}})=1$ and $\tau$ is a primitive $\p^{m_{1}}\bar{\p}^{m_{2}}$-torsion point of $E$ for a pair of nonnegative integers $(m_{1}, m_{2})$ with $m_{i} \leq n$ for $i=1,2$. By Lemma \ref{useful_lemma}, it suffices to prove that $\theta_{\mathfrak{a}}(\tau)$ is contained in a $\mathrm{Gal}(K(p^{n})/K)$-submodule of $K(p^{n})^{\times}$ generated by $\theta_{\mathfrak{a}}(\omega_{1,n}+\omega_{2,n})$ if $m_{1}m_{2} \neq 0$, by $\theta_{\mathfrak{a}}(\omega_{2,n})$ if $m_{1}=0$ and by $\theta_{\mathfrak{a}}(\omega_{1,n})$ if $m_{2}=0$.

First we assume $m_{1}=0$. Then Proposition \ref{elliptic_unit} (2), we can find an element $\sigma \in \mathrm{Gal}(K(\bar{\p}^{m_{2}})/K)$ such that
\[
  \theta_{\mathfrak{a}}(\tau)=\sigma \left(\theta_{\mathfrak{a}}(\omega_{2,m_{2}}) \right)
\] holds. Moreover, by Proposition \ref{elliptic_norm}, $\theta_{\mathfrak{a}}(\omega_{2,m_{2}})$ can be written as the norm image of $\theta_{\mathfrak{a}}(\omega_{2,n})$. This concludes the proof and the same argument works when $m_{2}=0$. If $m_{1}m_{2} \neq 0$, then by Proposition \ref{elliptic_unit} (2) again, there is $\sigma \in \mathrm{Gal}(K(\p^{m_{1}}\bar{\p}^{m_{2}})/K)$ such that 
\[
  \theta_{\mathfrak{a}}(\tau)=\sigma \left(\theta_{\mathfrak{a}}(\omega_{2,m_{1}}+\omega_{2,m_{2}}) \right)
\] and Proposition \ref{elliptic_norm} imply that $\theta_{\mathfrak{a}}(\omega_{2,m_{1}}+\omega_{2,m_{2}})$ can be written as the norm image of $\theta_{\mathfrak{a}}(\omega_{1,n}+\omega_{2,n})$ as desired. 
\end{proof}

\begin{remark}\label{useful_remark}
By a similar argument, it follows that the group $C_{K(\p^{n})}$ (resp. $C_{K(\bar{\p}^{n})}$) is generated by $\theta_{\mathfrak{a}}(\omega_{1,n})^{\sigma-1}$ (resp. $\theta_{\mathfrak{a}}(\omega_{2,n})^{\sigma-1}$), where $\mathfrak{a}$ runs through ideals of $O_{K}$ with $(\mathfrak{a}, 6\p)=1$ (resp. $(\mathfrak{a}, 6\bar{\p})=1$) and $\sigma$ through $\mathrm{Gal}(K(\p^{n})/K)$ (resp. $\mathrm{Gal}(K(\bar{\p}^{n})/K)$) for every $n \geq 1$. 
\end{remark}

We separate the group of elliptic units into the following three subgroups:

\begin{definition}\label{C_D}
Let $n \geq 1$ be an integer.
\begin{enumerate}
\item Let $C'(K(p^{n}))$ be the subgroup of $C(K(p^{n}))$ generated by $\mu(K(p^{n}))$ and  
\[
\theta_{\mathfrak{a}}(\omega_{1,n}+\omega_{2,n})^{\sigma-1}
\] where $\mathfrak{a}$ ranges over the ideals of $O_{K}$ with $(\mathfrak{a}, 6p)=1$ and $\sigma$ over $\mathrm{Gal}(K(p^{n})/K)$. 

\item Let $D_{1}(K(p^{n}))$ be the subgroup of $C(K(p^{n}))$ generated by 
\[
\theta_{\mathfrak{a}}(\omega_{1,n})^{\sigma-1}
\] where $\mathfrak{a}$ ranges over the ideals of $O_{K}$ with $(\mathfrak{a}, 6\p)=1$ and $\sigma$ over $\mathrm{Gal}(K(\p^{n})/K)$. Similarly, let $D_{2}(K(p^{n}))$ be the subgroup of $C(K(p^{n}))$ generated by 
\[
\theta_{\mathfrak{a}}(\omega_{2,n})^{\sigma-1}
\] where $\mathfrak{a}$ ranges over the ideals of $O_{K}$ with $(\mathfrak{a}, 6\bar{\p})=1$ and $\sigma$ over $\mathrm{Gal}(K(\bar{\p}^{n})/K)$.

\item We define three subgroups $\mathcal{C'}$, $\mathcal{D}_{1}$ and $\mathcal{D}_{2}$ of $\mathcal{C}$ by
\[
\mathcal{C'} \coloneqq \varprojlim_{n} C'(K(p^{n})) \otimes \mathbb{Z}_{p},
\]
\[
\mathcal{D}_{1} \coloneqq \varprojlim_{n} D_{1}(K(p^{n})) \otimes \mathbb{Z}_{p} \quad \text{and} \quad 
\mathcal{D}_{2} \coloneqq \varprojlim_{n} D_{2}(K(p^{n})) \otimes \mathbb{Z}_{p},
\]
 respectively.
\end{enumerate}
\end{definition}

Note that the group $\mathcal{C}$ is generated by $\mathcal{D}_{1}$, $\mathcal{D}_{2}$ and $\mathcal{C}'$ by Proposition \ref{useful_prop}. The following propositions shows $\mathcal{C}$ coincides with $\mathcal{C}'$:

\begin{proposition}\label{R_dS}
  The subgroups $\mathcal{D}_{1}$ and $\mathcal{D}_{2}$ are trivial. In particular, $\mathcal{C}=\mathcal{C}'$.
\end{proposition}

\begin{proof}
Let $(x_{n})_{n \geq 1} \in \mathcal{D}_{1}$ and fix an arbitrary integer $n_{0} \geq 1$. Since $x_{n_{0}+N}$ is contained in $K(\p^{n_{0}+N}) \subset K(p^{n_{0}}\p^{N})$ for every $N \geq 1$, it holds that 
\begin{eqnarray*}
x_{n_{0}}
&=& N_{K(p^{n_{0}+N})/K(p^{n_{0}})}(x_{n_{0}+N}) \\
&=& N_{K(p^{n_{0}}\p^{N})/K(p^{n_{0}})}(N_{K(p^{n_{0}+N})/K(p^{n_{0}}\p^{N})}(x_{n_{0}+N})) \\
&=& N_{K(p^{n_{0}}\p^{N})/K(p^{n_{0}})}(x_{n_{0}+N}^{[K(p^{n_{0}+N}) \colon K(p^{n_{0}}\p^{N})]}) \in (D_{1}(K(p^{n_{0}})) \otimes \mathbb{Z}_{p})^{p^{N}}.
\end{eqnarray*} Since $N$ is arbitrary, we conclude $x_{n_{0}}=1$. This shows that $\mathcal{D}_{1}$ is trivial, and the same proof works for $\mathcal{D}_{2}$.
\end{proof}

\section{Rewriting elliptic Soul\'e characters}\label{3}

Our first task toward proving the nontriviality of the elliptic Soul\'e characters is to modify $\epsilon_{\boldsymbol{m},n}$ into a product of elliptic units in $K(p^{n})$. This can be done by multiplying $\epsilon_{\boldsymbol{m},n}$ by a suitable nonzero multiple of $\mathbb{Z}_{p}$. 

\medskip

In the following of this section, let $\boldsymbol{m}=(m_{1}, m_{2}) \in I$, $\mathfrak{a}$ a nontrivial ideal of $O_{K}$ prime to $p$ and $\alpha \in O_{K}$ a generator of $\mathfrak{a}$. Since the Artin symbol corresponding to $\mathfrak{a}$ acts on $p$-power torsion points of $E$ by multiplication by $\alpha$ up to multiplication by an element of $O_{K}^{\times}=\mathrm{Aut}(E)$, which leaves $\theta_{p}$ invariant, it follows that
\[
\sigma_{\mathfrak{a}} \left( \theta_{p}(\tau) \right)=\theta_{p}(\alpha \tau)
\] for every nontrivial $p$-power torsion point $\tau$ of $E$.

\begin{lemma}\label{mathfrak_a}
Let $n \geq 1$ be an integer. As an element of $K(p^{\infty})^{\times}/K(p^{\infty})^{\times p^{n}}$, we have the following equality. 
\[
\epsilon_{\boldsymbol{m}, n}^{N\mathfrak{a}-\chi^{\boldsymbol{1}-\boldsymbol{m}}(\sigma_{\mathfrak{a}})}=\prod_{\substack{0 \leq a,b <p^{n} \\ p \nmid \gcd(a,b)}}\theta_{\mathfrak{a}}(a\omega_{1,n}+b\omega_{2,n})^{a^{m_{1}-1}b^{m_{2}-1}}.
\]
\end{lemma}
\begin{proof} 
By the above discussion, it holds that 
\begin{eqnarray*}
\theta_{p}(a\omega_{1,n+1+k}+b\omega_{2,n+1+k})
&=&\theta_{p}\left(\sigma_{\mathfrak{a}}(\sigma_{\mathfrak{a}}^{-1}(a\omega_{1,n+1+k}+b\omega_{2,n+1+k}))\right) \\
&=& (\theta_{p}\circ[\alpha])(a\chi_{1}(\sigma_{\mathfrak{a}}^{-1})\omega_{1,n+1+k}+b\chi_{2}(\sigma_{\mathfrak{a}}^{-1})\omega_{2,n+1+k})
\end{eqnarray*} for every $k \geq 0$. Hence it follows that 
\[
\epsilon_{\boldsymbol{m}, n}^{\chi^{\boldsymbol{1}-\boldsymbol{m}}(\sigma_{\mathfrak{a}})}= \prod_{\substack{0 \leq a,b <p^{n} \\ p \nmid \gcd(a,b)}}\left( 
\prod_{k=0}^{\infty} (\theta_{p} \circ [\alpha])(a\omega_{1, n+1+k}+b\omega_{2,n+1+k})^{-p^{2k}}
\right)^{a^{m_{1}-1}b^{m_{2}-1}},
\] and we obtain
\[
\epsilon_{\boldsymbol{m}, n}^{-N\mathfrak{a}+\chi^{\boldsymbol{1}-\boldsymbol{m}}(\sigma_{\mathfrak{a}})}
=\prod_{\substack{0 \leq a,b <p^{n} \\ p \nmid \gcd(a,b)}}
\left(
\prod_{k=0}^{n} (\frac{(\theta_{p} \circ [\alpha])(a\omega_{1,n+1+k}+b\omega_{2,n+1+k})}{\theta_{p}(a\omega_{1,n+1+k}+b\omega_{2, n+1+k})^{N\mathfrak{a}}})^{-p^{2k}}
\right)^{a^{m_{1}-1}b^{m_{2}-1}}.
\] By Lemma \ref{p_to_a}, we simplify the right-hand side as follows.
\begin{eqnarray*}
&=& \prod_{\substack{ 0 \leq a,b <p^{n} \\ p \nmid \gcd(a,b)}}
\left(
\prod_{k=0}^{n} 
\left(
\frac{(\theta_{\mathfrak{a}} \circ [p])(a\omega_{1, n+1+k}+b\omega_{2, n+1+k})}{\theta_{\mathfrak{a}}(a\omega_{1, n+1+k}+b\omega_{2,n+1+k})^{p^{2}}}
\right)^{-p^{2k}}
\right)^{a^{m_{1}-1}b^{m_{2}-1}} \\
&=& \prod_{\substack{ 0 \leq a,b <p^{n} \\ p \mid \gcd(a,b)}}
\left(
\prod_{k=0}^{n} 
\left(
\frac{\theta_{\mathfrak{a}}(a\omega_{1,n+k}+b\omega_{2,n+k})}{\theta_{\mathfrak{a}}(a\omega_{1,n+1+k}+b\omega_{2,n+1+k})^{p^{2}}}
\right)^{-p^{2k}}
\right)^{a^{m_{1}-1}b^{m_{2}-1}} \\
&=&  \prod_{\substack{ 0 \leq a,b <p^{n} \\ p \nmid \gcd(a,b)}} \theta_{\mathfrak{a}} (a\omega_{1,n}+b\omega_{2,n})^{-a^{m_{1}-1}b^{m_{2}-1}}.
\end{eqnarray*} This concludes the proof.
\end{proof}

\subsection{The case where $\boldsymbol{m}=(m_{1}, m_{2}) \geq (2,2)$}\label{3.1}

In this subsection, we assume that $\boldsymbol{m}=(m_{1}, m_{2}) \geq (2,2)$. In the following, we modify $\epsilon_{\boldsymbol{m},n}$ into a certain form that only involves $\theta_{\mathfrak{a}}$ evaluated over primitive $p^{n}$-torsion points of $E$. 

\begin{lemma}\label{non_deg_case}
Let $n \geq 1$ be an integer. As an element of $K(p^{n})^{\times}/K(p^{n})^{\times p^{n}}$, 
\[
\prod_{\substack{a, b \in \mathbb{Z}/p^{n}\mathbb{Z} \\ p \nmid \gcd(a,b)}}\theta_{\mathfrak{a}}(a\omega_{1,n}+b\omega_{2,n})^{a^{m_{1}-1}b^{m_{2}-1}}
\] is equal to the $\left( \frac{1}{1-i^{\boldsymbol{m}-\boldsymbol{1}}(\pi)}+\frac{1}{1-i^{\boldsymbol{m}-\boldsymbol{1}}(\bar{\pi})}-1 \right)$-th power of
\[
\epsilon_{\boldsymbol{m}, n, \mathfrak{a}} \coloneqq \prod_{a, b \in (\mathbb{Z}/p^{n}\mathbb{Z})^{\times}}\theta_{\mathfrak{a}}(a\omega_{1,n}+b\omega_{2,n})^{a^{m_{1}-1}b^{m_{2}-1}}.
\]
\end{lemma}

\begin{proof}
First, note that
\begin{eqnarray*}
\epsilon_{\boldsymbol{m}, n,\mathfrak{a}}^{p^{m_{1}-1}}
&=&  \prod_{a, b \in (\mathbb{Z}/p^{n}\mathbb{Z})^{\times}}\theta_{\mathfrak{a}}(a\omega_{1, n}+b\omega_{2, n})^{(pa)^{m_{1}-1}b^{m_{2}-1}} \\
&=& \prod_{\substack{\bar{a} \in p(\mathbb{Z}/p^{n}\mathbb{Z})^{\times} \\ b \in (\mathbb{Z}/p^{n}\mathbb{Z})^{\times}}} \left( \prod_{\substack{a \in (\mathbb{Z}/p^{n}\mathbb{Z})^{\times} \\ pa \equiv \bar{a} \bmod p^{n}}}\theta_{\mathfrak{a}}(a\omega_{1, n}+b\omega_{2,n}) \right)^{\bar{a}^{m_{1}-1}b^{m_{2}-1}}.
\end{eqnarray*} On the other hand, the distribution relation (Proposition \ref{distribution_relation}) implies
\begin{eqnarray*}
\prod_{\substack{ a \in (\mathbb{Z}/p^{n}\mathbb{Z})^{\times} \\ pa \equiv \bar{a} \bmod p^{n}}}\theta_{\mathfrak{a}}(a\omega_{1,n}+b\omega_{2,n})
&=&\prod_{\tau \in E[\p]}\theta_{\mathfrak{a}} (a\omega_{1,n}+b\omega_{2,n}+\tau) = \theta_{\mathfrak{a}}(\pi(a\omega_{1,n}+b\omega_{2,n})) \\
&=&\theta_{\mathfrak{a}}(\bar{\pi}^{-1}\bar{a} \omega_{1,n}+\pi b\omega_{2,n}).
\end{eqnarray*}
Hence we conclude
\begin{eqnarray*}
\epsilon_{\boldsymbol{m}, n,\mathfrak{a}}^{p^{m_{1}-1}} &=&
\prod_{ a,b \in (\mathbb{Z}/p^{n}\mathbb{Z})^{\times}} \theta_{\mathfrak{a}}(a\omega_{1,n}+b\omega_{2,n})^{(pa)^{m_{1}-1}b^{m_{2}-1}} \\
&=&  \prod_{\substack{\bar{a} \in p(\mathbb{Z}/p^{n}\mathbb{Z})^{\times} \\ b \in (\mathbb{Z}/p^{n}\mathbb{Z})^{\times}}} \theta_{\mathfrak{a}}(\bar{\pi}^{-1}\bar{a}\omega_{1,n}+\pi b \omega_{2,n})^{\bar{a}^{m_{1}-1}b^{m_{2}-1}} \\
&=& \left( \prod_{\substack{\bar{a} \in p(\mathbb{Z}/p^{n}\mathbb{Z})^{\times}\\ b \in (\mathbb{Z}/p^{n}\mathbb{Z})^{\times}}} \theta_{\mathfrak{a}}(\bar{a}\omega_{1,n}+b\omega_{2,n})^{\bar{a}^{m_{1}-1}b^{m_{2}-1}} \right)^{i_{\p}^{m_{1}-1}(\bar{\pi})i_{\bar{\p}}^{m_{2}-1}(\pi^{-1})}. 
\end{eqnarray*} Since $p^{m_{1}-1}=i_{1}^{m_{1}-1}(p)=i_{1}^{m_{1}-1}(\pi)i_{1}^{m_{1}-1}(\bar{\pi})$, we have
\[
\epsilon_{\boldsymbol{m},n,\mathfrak{a}}^{i^{\boldsymbol{m}-1}(\pi)} 
=
\prod_{\substack{a \in p(\mathbb{Z}/p^{n}\mathbb{Z})^{\times} \\ b \in (\mathbb{Z}/p^{n}\mathbb{Z})^{\times}}} \theta_{\mathfrak{a}}(a\omega_{1,n}+b\omega_{2,n})^{a^{m_{1}-1}b^{m_{2}-1}}.
\] 
For every $1 \leq i < n$, the same argument as above shows
\[
\epsilon_{\boldsymbol{m} ,n,\mathfrak{a}}^{i^{\boldsymbol{m}-\boldsymbol{1}}(\pi)^{i}}
=
\prod_{\substack{a \in p^{i}(\mathbb{Z}/p^{n}\mathbb{Z})^{\times} \\ b \in (\mathbb{Z}/p^{n}\mathbb{Z})^{\times}}} \theta_{\mathfrak{a}}(a\omega_{1,n}+b\omega_{2,n})^{a^{m_{1}-1}b^{m_{2}-1}} 
\] and 
\[
\epsilon_{\boldsymbol{m},n,\mathfrak{a}}^{i^{\boldsymbol{m}-\boldsymbol{1}}(\bar{\pi})^{i}}
=
\prod_{\substack{a \in (\mathbb{Z}/p^{n}\mathbb{Z})^{\times} \\ b \in p^{i}(\mathbb{Z}/p^{n}\mathbb{Z})^{\times}}} \theta_{\mathfrak{a}}(a\omega_{1,n}+b\omega_{2,n})^{a^{m_{1}-1}b^{m_{2}-1}}
\] The assertion follows from these equalities.
\end{proof}

\subsection{The case where $m_{1}=1$ or $m_{2}=1$}\label{3.2}

In this subsection, we assume that $m_{1}=1$ or $m_{2}=1$. We only treat the case where $m_{2}=1$, since the remaining case can be treated in the same manner. In the following, we reduce to a one-variable situation by considering the primitive $\p^{n}$-torsion points instead of the primitive $p^{n}$-torsion points.

\begin{lemma}\label{deg_case}
Let $n \geq 1$ be an integer. As an element of $K(p^{n})^{\times}/K(p^{n})^{\times p^{n}}$, 
\[
\prod_{\substack{a, b \in \mathbb{Z}/p^{n}\mathbb{Z}  \\ p \nmid \gcd(a,b)}}\theta_{\mathfrak{a}}(a\omega_{1, n}+b\omega_{2,n})^{a^{m_{1}-1}}
\] is the $\left( \frac{1-i^{\boldsymbol{m}-\boldsymbol{1}}(\bar{\pi})}{1-i^{\boldsymbol{m}-\boldsymbol{1}}(\pi)}+i^{\boldsymbol{m}-\boldsymbol{1}}(\bar{\pi}) \right)$-th power of
\[
\epsilon_{\boldsymbol{m},n, \mathfrak{a}} \coloneqq \prod_{a \in (\mathbb{Z}/p^{n}\mathbb{Z})^{\times}} \theta_{\mathfrak{a}}(a\bar{\pi}^{n}\omega_{1,n})^{a^{m_{1}-1}}.
\]
\end{lemma}

\begin{proof} First, we divide the set $\{ (a, b) \in (\mathbb{Z}/p^{n}\mathbb{Z})^{2} \mid p \nmid \gcd(a,b) \}$ into the following two subsets
\[
S_{1} \coloneqq \{ (a, b) \in (\mathbb{Z}/p^{n}\mathbb{Z})^{2}  \mid b \in (\mathbb{Z}/p^{n}\mathbb{Z})^{\times} \} 
\] and
\[
S_{2} \coloneqq \{ (a, b) \in (\mathbb{Z}/p^{n}\mathbb{Z})^{2}  \mid a \in (\mathbb{Z}/p^{n}\mathbb{Z})^{\times} \text{ and } b \equiv 0 \bmod p  \}.
\]  Then, the same argument as in the proof of Lemma \ref{non_deg_case} shows the  equality
\[
\prod_{(a,b) \in S_{1}}\theta_{\mathfrak{a}}(a\omega_{1,n}+b\omega_{2,n})^{a^{m_{1}-1}}=\left( \prod_{ a , b \in (\mathbb{Z}/p^{n}\mathbb{Z})^{\times}}\theta_{\mathfrak{a}}(a\omega_{1,n}+b\omega_{2, n})^{a^{m_{1}-1}} \right)^{\sum_{i=0}^{n-1}i^{\boldsymbol{m}-\boldsymbol{1}}(\pi)^{i}}.
\] 
To compute the right-hand side, note that 
\begin{eqnarray*}
\prod_{ a , b \in (\mathbb{Z}/p^{n}\mathbb{Z})^{\times}}\theta_{\mathfrak{a}}(a\omega_{1,n}+b\omega_{2,n})^{a^{m_{1}-1}} 
&=& \prod_{ a \in (\mathbb{Z}/p^{n}\mathbb{Z})^{\times}}\prod_{ b \in (\mathbb{Z}/p^{n}\mathbb{Z})^{\times}}\theta_{\mathfrak{a}}(a\omega_{1,n}+b\omega_{2,n})^{a^{m_{1}-1}} \\
&=& \prod_{ a \in (\mathbb{Z}/p^{n}\mathbb{Z})^{\times}}N_{K(p^{n})/K(\p^{n})}(\theta_{\mathfrak{a}}(a\omega_{1,n}+\omega_{2,n}))^{a^{m_{1}-1}} \\
&=& \left( \prod_{ a \in (\mathbb{Z}/p^{n}\mathbb{Z})^{\times}} \theta_{\mathfrak{a}}(a\bar{\pi}^{n}\omega_{1,n})^{a^{m_{1}-1}} \right)^{1-\sigma_{\bar{\p}}^{-1}}.
\end{eqnarray*} Here, we use Lemma \ref{elliptic_norm} to deduce the last equality. Since the inverse of the Artin symbol $\sigma_{\bar{\p}}^{-1}$ sends $\theta_{\mathfrak{a}}(a\bar{\pi}^{n}\omega_{1, n})$ to $\theta_{\mathfrak{a}}(a\bar{\pi}^{n-1}\omega_{1,n})$, it follows that
\[
\left( \prod_{ a \in (\mathbb{Z}/p^{n}\mathbb{Z})^{\times}} \theta_{\mathfrak{a}}(a\bar{\pi}^{n}\omega_{1,n})^{a^{m_{1}-1}} \right)^{1-\sigma_{\bar{\p}}^{-1}}=\left( \prod_{ a \in (\mathbb{Z}/p^{n}\mathbb{Z})^{\times}} \theta_{\mathfrak{a}}(a\bar{\pi}^{n}\omega_{1,n})^{a^{m_{1}-1}} \right)^{1-i^{\boldsymbol{m}-\boldsymbol{1}}(\bar{\pi})}.
\] 

Next, we compute the product $\prod_{(a,b) \in S_{2}}\theta_{\mathfrak{a}}(a\omega_{1,n}+b\omega_{2,n})^{a^{m_{1}-1}}
$ by using Proposition \ref{distribution_relation} as follows:
\begin{eqnarray*}
\prod_{(a,b) \in S_{2}}\theta_{\mathfrak{a}}(a\omega_{1,n}+b\omega_{2,n})^{a^{m_{1}-1}}
&=& \prod_{ a \in (\mathbb{Z}/p^{n}\mathbb{Z})^{\times}} \left( \prod_{b \in p\mathbb{Z}/p^{n}\mathbb{Z}}\theta_{\mathfrak{a}}(a\omega_{1,n}+b\omega_{2,n}) \right)^{a^{m_{1}-1}} \\
&=& \prod_{ a \in (\mathbb{Z}/p^{n}\mathbb{Z})^{\times}} \left( \prod_{\tau \in E[\bar{\p}^{n-1}]}\theta_{\mathfrak{a}}(a\omega_{1,n}+\tau) \right)^{a^{m_{1}-1}} \\
&=& \prod_{ a \in (\mathbb{Z}/p^{n}\mathbb{Z})^{\times}} \theta_{\mathfrak{a}}(a\bar{\pi}^{n-1}\omega_{1,n})^{a^{m_{1}-1}} \\
&=& \left( \prod_{ a \in (\mathbb{Z}/p^{n}\mathbb{Z})^{\times}} \theta_{\mathfrak{a}}(a\bar{\pi}^{n}\omega_{1,n})^{a^{m_{1}-1}} \right)^{i^{\boldsymbol{m}-\boldsymbol{1}}(\bar{\pi})}.
\end{eqnarray*} 

Combining these computations yields that $
\prod_{\substack{a, b \in \mathbb{Z}/p^{n}\mathbb{Z}  \\ p \nmid \gcd(a,b)}}\theta_{\mathfrak{a}}(a\omega_{1,n}+b\omega_{2,n})^{a^{m_{1}-1}}$ multiplied by
\[
(1-i^{\boldsymbol{m}-\boldsymbol{1}}(\bar{\pi}))\sum_{i=0}^{\infty}i^{\boldsymbol{m}-\boldsymbol{1}}(\pi)^{i}+i^{\boldsymbol{m}-\boldsymbol{1}}(\bar{\pi})=\frac{1-i^{\boldsymbol{m}-\boldsymbol{1}}(\bar{\pi})}{1-i^{\boldsymbol{m}-\boldsymbol{1}}(\pi)}+i^{\boldsymbol{m}-\boldsymbol{1}}(\bar{\pi})
\] is equal to $\epsilon_{\boldsymbol{m}, n,\mathfrak{a}}$ as desired. 
\end{proof}

A similar computation yields the following.

\begin{lemma}
Let $n \geq 1$ be an integer. As an element of $K(p^{n})^{\times}/K(p^{n})^{\times p^{n}}$, 
\[
\prod_{\substack{a, b \in \mathbb{Z}/p^{n}\mathbb{Z} \\ p \nmid \gcd(a,b)}}\theta_{\mathfrak{a}}(a\omega_{1,n}+b\omega_{2, n})^{b^{m_{2}-1}}
\] is equal to the $\left( \frac{1-i^{\boldsymbol{m}-\boldsymbol{1}}(\pi)}{1-i^{\boldsymbol{m}-\boldsymbol{1}}(\bar{\pi})}+i^{\boldsymbol{m}-\boldsymbol{1}}(\pi) \right)$-th power of
\[
\epsilon_{\boldsymbol{m}, n, \mathfrak{a}} \coloneqq \prod_{b \in (\mathbb{Z}/p^{n}\mathbb{Z})^{\times}} \theta_{\mathfrak{a}}(b\pi^{n}\omega_{2,n})^{b^{m_{2}-1}}.
\]
\end{lemma}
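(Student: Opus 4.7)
The plan is to mirror the proof of Lemma \ref{lmm:deg} verbatim after swapping the roles of the two primes $\mathfrak{p}$ and $\bar{\mathfrak{p}}$ (equivalently, of $\omega_{1}$ and $\omega_{2}$, of $\pi$ and $\bar{\pi}$, and of $\chi_{1}$ and $\chi_{2}$). Concretely, I would partition the index set $\{(a,b)\in(\mathbb{Z}/p^{n}\mathbb{Z})^{2}\mid p\nmid\gcd(a,b)\}$ as $S_{1}'\sqcup S_{2}'$ with
\[
S_{1}'\coloneqq\{(a,b)\mid a\in(\mathbb{Z}/p^{n}\mathbb{Z})^{\times}\},\qquad S_{2}'\coloneqq\{(a,b)\mid b\in(\mathbb{Z}/p^{n}\mathbb{Z})^{\times},\ a\equiv0\bmod p\},
\]
which is exactly the partition used in Lemma \ref{lmm:deg} with the roles of $a$ and $b$ exchanged.

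On $S_{1}'$, the argument of Lemma \ref{lmm:nondeg}, applied to the second coordinate, identifies the contribution with the $b$-unit sub-product raised to the geometric series $\sum_{i=0}^{n-1}i^{\boldsymbol{m}-\boldsymbol{1}}(\bar{\pi})^{i}$. For the remaining product over $a,b\in(\mathbb{Z}/p^{n}\mathbb{Z})^{\times}$ with exponent $b^{m_{2}-1}$, I would collapse the inner product over $a$ via the norm from $K(p^{n})$ to $K(\bar{\mathfrak{p}}^{n})$ and invoke Proposition \ref{prp:ell_norm} to rewrite it as $\theta_{\mathfrak{a}}(b\pi^{n}\omega_{2,n})^{1-\sigma_{\mathfrak{p}}^{-1}}$. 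Since $\sigma_{\mathfrak{p}}^{-1}$ acts on $\omega_{2,n}\in E[\bar{\mathfrak{p}}^{n}]$ by multiplication by $i_{2}(\pi)^{-1}$, it sends $\theta_{\mathfrak{a}}(b\pi^{n}\omega_{2,n})$ to $\theta_{\mathfrak{a}}(b\pi^{n-1}\omega_{2,n})$, so the exponent collapses to $1-i^{\boldsymbol{m}-\boldsymbol{1}}(\pi)$ after the Galois invariance of the exponent $b^{m_{2}-1}$ (relative to the $\sigma_{\mathfrak{p}}$-action, which only twists via $\chi_{2}$) is tracked.

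On $S_{2}'$, I would apply the distribution relation of Proposition \ref{prp:dist}: for fixed $b\in(\mathbb{Z}/p^{n}\mathbb{Z})^{\times}$, the inner product over $a\in p\mathbb{Z}/p^{n}\mathbb{Z}$ runs through the translates by $\tau\in E[\mathfrak{p}^{n-1}]$, collapsing to $\theta_{\mathfrak{a}}(\pi^{n-1}b\omega_{2,n})=\theta_{\mathfrak{a}}(b\pi^{n}\omega_{2,n})^{i^{\boldsymbol{m}-\boldsymbol{1}}(\pi)}$ after applying $\sigma_{\mathfrak{p}}^{-1}$ once as above (and keeping in mind that passing between $\pi^{n-1}$ and $\pi^{n}$ uses that $\pi$ acts on $E[\bar{\mathfrak{p}}^{n}]$ through its unit image $i_{2}(\pi)\in\mathbb{Z}_{p}^{\times}$).

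Adding the two contributions produces the total exponent
\[
(1-i^{\boldsymbol{m}-\boldsymbol{1}}(\pi))\sum_{i=0}^{n-1}i^{\boldsymbol{m}-\boldsymbol{1}}(\bar{\pi})^{i}+i^{\boldsymbol{m}-\boldsymbol{1}}(\pi)=\frac{1-i^{\boldsymbol{m}-\boldsymbol{1}}(\pi)}{1-i^{\boldsymbol{m}-\boldsymbol{1}}(\bar{\pi})}+i^{\boldsymbol{m}-\boldsymbol{1}}(\pi)
\]
modulo $p^{n}$, exactly as predicted, where the geometric series is valid modulo $p^{n}$ because $i^{\boldsymbol{m}-\boldsymbol{1}}(\bar{\pi})^{n}\equiv0\bmod p^{n}$ (since $\bar{\pi}\in\bar{\mathfrak{p}}$ maps to a uniformizer under $i_{2}$). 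The only place demanding care is the bookkeeping of $\sigma_{\mathfrak{p}}^{-1}$-twists on both the function-theoretic side (which shifts $\pi^{n}$ to $\pi^{n-1}$) and the exponent $b^{m_{2}-1}$; once this is correctly accounted for, the identity follows by direct comparison with $\epsilon_{\boldsymbol{m},n,\mathfrak{a}}=\prod_{b}\theta_{\mathfrak{a}}(b\pi^{n}\omega_{2,n})^{b^{m_{2}-1}}$.
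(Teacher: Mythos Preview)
Your proposal is correct and matches the paper's own treatment exactly: the paper does not spell out a proof for this lemma but simply states ``The same computation yields the following,'' referring to the proof of Lemma~\ref{lmm:deg}, and what you have written is precisely that computation carried out with the roles of $\mathfrak{p}$ and $\bar{\mathfrak{p}}$ (and correspondingly $\omega_{1}\leftrightarrow\omega_{2}$, $\pi\leftrightarrow\bar{\pi}$) interchanged. The partition $S_{1}'\sqcup S_{2}'$, the use of the norm $N_{K(p^{n})/K(\bar{\mathfrak{p}}^{n})}$ together with Proposition~\ref{prp:ell_norm}, the action of $\sigma_{\mathfrak{p}}^{-1}$, the distribution relation on $S_{2}'$, and the final geometric-series bookkeeping all mirror Lemma~\ref{lmm:deg} step for step.
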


\section{Nontriviality of elliptic Soul\'e characters}\label{4}

In this section, we prove Theorem \ref{thm:main} (1) (=Theorem \ref{non_vanishing}) and study various situations which imply the finiteness of $H^{2}_{\et}$ appearing in the assumption of the theorem. Let
\[
I \coloneqq \{ \boldsymbol{m}=(m_{1}, m_{2}) \in \mathbb{Z}_{ \geq 1}^{2} \setminus \{ \boldsymbol{1} \} \mid m_{1} \equiv m_{2} \bmod w_{K} \}.
\]

First, we prepare the following lemma. 

\begin{lemma}\label{Soule_factor}
Let $\boldsymbol{m}=(m_{1},m_{2}) \in \mathbb{Z}_{\geq 1}^{2}\setminus \{ \boldsymbol{1} \}$.
\begin{enumerate}
\item If $\boldsymbol{m} \not \in I$, then $\kappa_{\boldsymbol{m}}$ is trivial.
\item Let $\Omega_{p}$ be the maximal abelian pro-$p$ extension of $K(p^{\infty})$ unramified outside $p$. Then the character $\kappa_{\boldsymbol{m}}$ factors through the natural homomorphism \[
  G_{K(E[p^{\infty}])}^{\mathrm{ab}} \to \mathrm{Gal}(\Omega_{p}/K(p^{\infty})),
  \] which is surjective. 
\end{enumerate}
\end{lemma} 
\begin{proof}
(1) For every $\zeta \in O_{K}^{\times}$, it follows by observing the action of $O_{K}^{\times}$ on $E$ that $\kappa_{\boldsymbol{m}}=\zeta^{m_{1}-m_{2}}\kappa_{\boldsymbol{m}}$. Hence $\kappa_{\boldsymbol{m}}$ is trivial unless $\boldsymbol{m} \not \in I$.

(2) Since the intersection $\Omega_{p} \cap K(E[p^{\infty}])$ is an abelian extension of $K$ unramified outside $p$, we have $\Omega_{p} \cap K(E[p^{\infty}])=K(p^{\infty})$. The surjectivity follows from this equality. The rest of the assertion that the character $\kappa_{\boldsymbol{m}}$ factors through the homomorphism \[
  G_{K(E[p^{\infty}])}^{\mathrm{ab}} \to \mathrm{Gal}(\Omega_{p}/K(p^{\infty})),
\]follows by the definition of $\kappa_{\boldsymbol{m}}$ (Definition \ref{ellSoule}) and the fact $\theta_{p}(\omega)$ is a $p$-unit of $K(p^{\infty})$ for every $\omega \in  \left( E[p^{\infty}]\setminus E[p] \right)(\overline{\mathbb{Q}})$ cf. Remark \ref{pthpower}. 
\end{proof}

Lemma \ref{Soule_factor}(2) allows us to regard $\kappa_{\boldsymbol{m}}$ as an element of the cohomology group \[
  H^{1}_{\et}(O_{K(p^{\infty}), S_{p}}, \mathbb{Z}_{p}(\boldsymbol{m}))^{\mathrm{Gal}(K(p^{\infty})/K)}.
\]

\medskip

In the following of this paper, we abbreviate $O_{K(\p^{m_{1}}\bar{\p}^{m_{2}})}^{\times}$ as $E_{\boldsymbol{m}}$ for every $\boldsymbol{m}=(m_{1}, m_{2}) \in \mathbb{Z}^{2}_{\geq 0}$. If $m_{1}=m_{2}=m$, it is even abbreviated as $E_{m}$.

\begin{definition}\label{norm_unit}
We define a $\mathbb{Z}_{p}[[\mathrm{Gal}(K(p^{\infty})/K)]]$-module $\mathcal{E}$ to be 
\[
\mathcal{E} \coloneqq \varprojlim_{n} E_{n} \otimes \mathbb{Z}_{p},
\] where transition maps are taken to be norms. Note that this is equivalent to defining $\mathcal{E}$ to be $\varprojlim_{n} E_{n}/E_{n}^{p^{n}}$.
\end{definition}

\subsection{Conditional nontriviality of elliptic Soul\'e characters}\label{4.1}

In this section, we prove Theorem \ref{thm:main} (1) (=Theorem \ref{non_vanishing}). We adopt some techniques developed by Kings \cite{Ki3}  to study the Tamagawa number conjecture for CM elliptic curves. There he considers the \'etale (or Galois) cohomology groups with coefficients in the Tate twist of the $p$-adic Tate module $T_{p}(E)(k+1)=\mathbb{Z}_{p}(k+2, k+1) \oplus \mathbb{Z}_{p}(k+1, k+2)$ for $k \geq 0$.

First, we review the construction of the Soul\'e elliptic elements \cite[2.2.1]{Ki3}, which originates from Soul\'e \cite{So1} \cite{So2}. We fix $\boldsymbol{m}=(m_{1}, m_{2}) \in I$ and let $e=(e_{n})_{n \geq 1} \in \mathcal{E}$ be a norm compatible system of units. For every $n \geq 1$, the element $e_{n}$ can be regarded as an element of $H^{1}_{\et}(O_{K(p^{n}),S_{p}}, \mathbb{Z}/p^{n}(\boldsymbol{m}))$ via the inclusion from Kummer theory
\[
E_{n}/E_{n}^{p^{n}}(\boldsymbol{m}-\boldsymbol{1}) \subset H^{1}_{\et}(O_{K(p^{n}),S_{p}}, \mathbb{Z}/p^{n}(\boldsymbol{m})).
\] Moreover, by composing this inclusion with the corestriction map with respect to $\mathrm{Spec}(O_{K(p^{n}),S_{p}}) \to \mathrm{Spec}(O_{K,S_{p}})$, we obtain an element of $H^{1}_{\et}(O_{K, S_{p}},  \mathbb{Z}/p^{n}(\boldsymbol{m}))$. 

Then it holds by direct computation that such a system of elements forms a projective system with respect to the reduction map, hence defines an element of $H^{1}_{\et}(O_{K, S_{p}},  \mathbb{Z}_{p}(\boldsymbol{m}))$. This construction gives a homomorphism
\[
\mathcal{E}(\boldsymbol{m}-\boldsymbol{1})=(\varprojlim E_{n}/E_{n}^{p^{n}})(\boldsymbol{m}-\boldsymbol{1}) \to H^{1}_{\et}(O_{K, S_{p}},  \mathbb{Z}_{p}(\boldsymbol{m})),
\] and it follows that this homomorphism factors through the $\mathrm{Gal}(K(p^{\infty})/K)$-coinvariants of $\mathcal{E}(\boldsymbol{m}-\boldsymbol{1})$. We write the resulting homomorphism as
\[
e_{\boldsymbol{m}} \colon \mathcal{E}(\boldsymbol{m}-\boldsymbol{1})_{\mathrm{Gal}(K(p^{\infty})/K)}  \to H^{1}_{\et}(O_{K, S_{p}},  \mathbb{Z}_{p}(\boldsymbol{m})).
\]  Moreover, we use the same letter $e_{\boldsymbol{m}}$ to denote the homomorphism
\[
  \mathcal{E}(\boldsymbol{m}-\boldsymbol{1})_{\mathrm{Gal}(K(p^{\infty})/K)}  \to \mathrm{Hom}_{\mathrm{Gal}(K(p^{\infty})/K)}(\mathrm{Gal}(\Omega_{p}/K(p^{\infty})), \mathbb{Z}_{p}(\boldsymbol{m}))
\] obtained by composing $e_{\boldsymbol{m}}$ with the restriction map 
\[
  H^{1}_{\et}(O_{K, S_{p}},  \mathbb{Z}_{p}(\boldsymbol{m})) \to H^{1}_{\et}(O_{K(p^{\infty}), S_{p}}, \mathbb{Z}_{p}(\boldsymbol{m}))^{\mathrm{Gal}(K(p^{\infty})/K)}.
\] 
We remark that the both kernel and the cokernel of this restriction map are finite since both $H^{1}(\mathrm{Gal}(K(p^{\infty})/K), \mathbb{Z}_{p}(\boldsymbol{m}))$ and $H^{2}(\mathrm{Gal}(K(p^{\infty})/K), \mathbb{Z}_{p}(\boldsymbol{m}))$ are finite. 

\medskip

The next proposition allows us to relate $e_{\boldsymbol{m}}$ with the elliptic Soul\'e characters.

\begin{lemma}
For $u=(u_{n})_{n \geq 1} \in \mathcal{E}$, the character $e_{\boldsymbol{m}}(u) \colon \mathrm{Gal}(\Omega_{p}/K(p^{\infty})) \to \mathbb{Z}_{p}(\boldsymbol{m})$ modulo $p^{n}$ coincides with the Kummer character associated to the $p^{n}$-th root of 
\[
\prod_{\sigma \in \mathrm{Gal}(K(p^{n})/K)} \sigma(u_{n})^{\chi^{\boldsymbol{m}-\boldsymbol{1}}(\sigma)} .
\]
\end{lemma}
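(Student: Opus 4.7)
The plan is to unwind the construction of $e_{\boldsymbol{m}}(u)$ and to compute the corestriction of the Kummer cocycle attached to $u_n \otimes \omega_n^{\boldsymbol{m}-\boldsymbol{1}}$ explicitly at the level of $1$-cocycles. Three ingredients enter: (i) the inclusion $E_n/E_n^{p^n}(\boldsymbol{m}-\boldsymbol{1}) \hookrightarrow H^1_{\mathrm{\acute{e}t}}(O_{K(p^n),S_p}, \mathbb{Z}/p^n(\boldsymbol{m}))$ coming from Kummer theory, (ii) the standard cocycle formula for corestriction along the normal subgroup $G_{K(p^n)} \subset G_K$, and (iii) the twist by $\chi^{\boldsymbol{m}-\boldsymbol{1}}$ in the $G_K$-action on $\mathbb{Z}/p^n(\boldsymbol{m})$.

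First, I would trivialize $\mathbb{Z}/p^n(\boldsymbol{m}-\boldsymbol{1})$ using the basis $\omega_n^{\boldsymbol{m}-\boldsymbol{1}}$, so that $\mathbb{Z}/p^n(\boldsymbol{m})$ is identified with $\mu_{p^n}$ as an abelian group, but with $G_K$ acting through the twisted rule $\sigma \star \zeta = \sigma(\zeta)^{\chi^{\boldsymbol{m}-\boldsymbol{1}}(\sigma)}$. Under this identification Kummer theory sends the class of $u_n \otimes \omega_n^{\boldsymbol{m}-\boldsymbol{1}}$ to the cocycle $c \colon G_{K(p^n)} \to \mu_{p^n}$ defined by $c(h) = h(u_n^{1/p^n})/u_n^{1/p^n}$ for a fixed choice of $p^n$-th root of $u_n$.

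Next, fix lifts $\{\tilde{\sigma}\}_{\sigma \in \mathrm{Gal}(K(p^n)/K)} \subset G_K$ of a system of coset representatives. The standard cocycle formula for corestriction along a normal subgroup gives
\[
\mathrm{Cor}(c)(g) = \prod_{\sigma \in \mathrm{Gal}(K(p^n)/K)} \tilde{\sigma} \star c(\tilde{\sigma}^{-1} g \tilde{\sigma})
\]
for every $g \in G_{K(p^n)}$, with the twisted action on $\mu_{p^n}$. Treating $\tilde{\sigma}(u_n^{1/p^n})$ as a chosen $p^n$-th root of $\sigma(u_n)$, a direct manipulation rewrites the $\sigma$-th factor as $\bigl(g(\sigma(u_n)^{1/p^n})/\sigma(u_n)^{1/p^n}\bigr)^{\chi^{\boldsymbol{m}-\boldsymbol{1}}(\sigma)}$. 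Consequently $\mathrm{Cor}(c)$ is precisely the Kummer character of $\prod_{\sigma} \sigma(u_n)^{\chi^{\boldsymbol{m}-\boldsymbol{1}}(\sigma)}$, and further restriction to $G_{K(p^\infty)}^{\mathrm{ab}}$ yields the stated formula after comparing with the definition of $e_{\boldsymbol{m}}(u)$.

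The main delicate point is keeping the twist consistent throughout: the $G_K$-action on $\mathbb{Z}/p^n(\boldsymbol{m})$ differs from the cyclotomic one by $\chi^{\boldsymbol{m}-\boldsymbol{1}}$, and this exponent must enter at exactly the right place inside the corestriction formula. Well-definedness (independence of the lifts $\tilde\sigma$ and of the choice of $p^n$-th roots) then follows automatically from the cocycle identity together with the cancellation of $\mu_{p^n}$-ambiguities inside the product over $\sigma$. Finally, compatibility of these explicit descriptions as $n$ varies is forced by the norm-compatibility of $u$ and by the fact that corestriction commutes with reduction modulo $p^n$, so the displayed formula indeed describes the reduction of $e_{\boldsymbol{m}}(u)$ modulo $p^n$.
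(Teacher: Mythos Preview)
Your proposal is correct and is precisely the unwinding that the paper has in mind: the paper's own proof is a single sentence asserting that the claim follows directly from the construction of $e_{\boldsymbol{m}}$, and what you have written is exactly that direct verification, carried out via the standard corestriction formula for a normal subgroup together with the Kummer identification twisted by $\chi^{\boldsymbol{m}-\boldsymbol{1}}$.
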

\begin{proof}
The assertion follows directly from the construction of $e_{\boldsymbol{m}}$.
\end{proof}

Note that the system of elliptic units $\left( \theta_{\mathfrak{a}}(\omega_{1,n}+\omega_{2,n}) \right)_{n \geq 1}$ is contained in $\mathcal{E}$ for every nontrivial ideal $\mathfrak{a}$ of $O_{K}$ relatively prime to $p$ by Lemma \ref{elliptic_norm}. 

\begin{definition}\label{dfn:kappa_a}
Let $\boldsymbol{m} \in I$ and $\mathfrak{a}$ a nontrivial ideal of $O_{K}$ relatively prime to $p$. We define a character $\kappa_{\boldsymbol{m},\mathfrak{a}} \colon \mathrm{Gal}(\Omega_{p}/K(p^{\infty})) \to \mathbb{Z}_{p}(\boldsymbol{m})$ to be the image of the system $\left( \theta_{\mathfrak{a}}(\omega_{1,n}+\omega_{2,n}) \right)_{n \geq 1}$ under the homomorphism $e_{\boldsymbol{m}}$.
\end{definition}

Then, results in the previous section yield the following formula:

\begin{proposition}\label{kappa_a}
Let $\boldsymbol{m}=(m_{1}, m_{2}) \in I$ and $\mathfrak{a}$ a nontrivial ideal of $O_{K}$ relatively prime to $p$. Then the following relation holds:
\[
w_{K} \left( \frac{1}{1-i^{\boldsymbol{m}-\boldsymbol{1}}(\pi)}+\frac{1}{1-i^{\boldsymbol{m}-\boldsymbol{1}}(\bar{\pi})}-1 \right) \kappa_{\boldsymbol{m},\mathfrak{a}}
=
\left(
  N\mathfrak{a}-\chi^{\boldsymbol{1}-\boldsymbol{m}}(\sigma_{\mathfrak{a}})
\right)
\kappa_{\boldsymbol{m}}.
\]
\end{proposition}

\begin{proof}
If $\boldsymbol{m} \geq (2,2)$, the assertion follows from Lemma \ref{mathfrak_a}, Lemma \ref{non_deg_case} and the fact that we count the same value $w_{K}$ times. If $m_{2}=1$, the character $\chi^{\boldsymbol{m}-\boldsymbol{1}}(=\chi_{1}^{m_{1}-1})$ factors through $\mathrm{Gal}(K(\p^{\infty})/K)$. Hence $\kappa_{\boldsymbol{m}, \mathfrak{a}}$ modulo $p^{n}$ corresponds to
\[
\prod_{\sigma \in \mathrm{Gal}(K(p^{n})/K)} \sigma\left(\theta_{\mathfrak{a}}(\omega_{1,n}+\omega_{2,n})\right)^{\chi^{\boldsymbol{m}-\boldsymbol{1}}(\sigma)}=\prod_{\sigma \in \mathrm{Gal}(K(\p^{n})/K)} \sigma \left(N_{K(p^{n})/K(\p^{n})}(\theta_{\mathfrak{a}}(\omega_{1,n}+\omega_{2,n}))\right)^{\chi^{\boldsymbol{m}-\boldsymbol{1}}(\sigma)}
\] for every $n \geq 1$. By Lemma \ref{elliptic_norm}, the right-hand side can be simplified as
\[
\left( \prod_{\sigma \in \mathrm{Gal}(K(\p^{n})/K)} \sigma(\theta_{\mathfrak{a}}(\bar{\pi}^{n}\omega_{1,n}))^{\chi_{1}^{m_{1}-1}(\sigma)}\right)^{1-\sigma_{\bar{\p}}^{-1}}.
\] Hence the right-hand side is equal to the $w_{K}^{-1}(1-i^{\boldsymbol{m}-\boldsymbol{1}}(\bar{\pi}))$-th power of $\epsilon_{\boldsymbol{m}, n, \mathfrak{a}}$. By Lemma \ref{mathfrak_a} and Lemma \ref{deg_case}, we conclude that 
\begin{eqnarray*}
\left( N\mathfrak{a}-\chi^{\boldsymbol{1}-\boldsymbol{m}}(\sigma_{\mathfrak{a}}) \right) \kappa_{\boldsymbol{m}}=
\left( \frac{1-i^{\boldsymbol{m}-\boldsymbol{1}}(\bar{\pi})}{1-i^{\boldsymbol{m}-\boldsymbol{1}}(\pi)}+i^{\boldsymbol{m}-\boldsymbol{1}}(\bar{\pi})
\right)
\frac{w_{K}}{1-i^{\boldsymbol{m}-\boldsymbol{1}}(\bar{\pi})}\kappa_{\boldsymbol{m}, \mathfrak{a}}
\end{eqnarray*} as desired.
\end{proof}

To prove a conditional nontriviality of the elliptic Soul\'e characters, we need the following observation.

\begin{lemma}\label{non_vanishing_lemma}
The $\mathbb{Z}_{p}$-module $e_{\boldsymbol{m}}(\mathcal{C}(\boldsymbol{m}-\boldsymbol{1})_{\mathrm{Gal}(K(p^{\infty})/K)})$ is contained in a $\mathbb{Z}_{p}$-submodule of $H^{1}_{\et}(O_{K(p^{\infty}), S_{p}}, \mathbb{Z}_{p}(\boldsymbol{m}))^{\mathrm{Gal}(K(p^{\infty})/K)}$ generated by $\kappa_{\boldsymbol{m}}$.
\end{lemma}
\begin{proof}
By Proposition \ref{R_dS}, the group $\mathcal{C}$ coincides with $\mathcal{C'}$. Moreover, by \cite[Lemma 3.6]{Sch}, it holds that $\varprojlim_{n} \mu(K(p^{n})) \subset \mathcal{C}'$ is trivial. Hence to conclude the proof, it suffices to show that the image of an element of the form $\theta_{\mathfrak{a}}(\omega_{1,n}+\omega_{2,n})^{\sigma-1}$ (cf. Definition \ref{C_D}) in $H^{1}_{\et}(O_{K(p^{\infty}), S_{p}}, \mathbb{Z}/p^{n}(\boldsymbol{m}))$ is a multiple of $\kappa_{\boldsymbol{m}} \bmod p^{n}$ for every $n \geq 1$. This claim follows from the proof of Proposition \ref{kappa_a}.
\end{proof}

\begin{theorem}\label{non_vanishing}
Let $\boldsymbol{m} \in I$, and assume that $H^{2}_{\et}(O_{K,S_{p}}, \mathbb{Z}_{p}(\boldsymbol{m}))$ is finite. Then the $\boldsymbol{m}$-th elliptic Soul\'e character $\kappa_{\boldsymbol{m}}$ is nontrivial.
\end{theorem}

\begin{proof}
The proof of this theorem is similar to that of \cite[Proposition 5.2.5]{Ki3}. We divide the proof into several steps. 

\medskip

First, let $M \subset K(p)$ (resp. $L \subset K(p^{\infty})$) be the subfield of $K(p)$ (resp. $K(p^{\infty})$) which corresponds to the kernel of $\chi^{\boldsymbol{m}-\boldsymbol{1}}$ modulo $p$ (resp. the kernel of $\chi^{\boldsymbol{m}-\boldsymbol{1}}$). Then $L/M$ is a $\mathbb{Z}_{p}$-extension. Let $L_{n}/M$ denote the subextension of $L/M$ such that $[L_{n}:M]=p^{n}$ for every $n \geq 1$ and $\mathcal{E}_{L} \coloneqq \varprojlim_{n} O_{L_{n}}^{\times}/O_{L_{n}}^{\times p^{n}}$. Similar to the construction of $e_{\boldsymbol{m}}$, one can construct a homomorphism:
\[
e_{\boldsymbol{m}, L}: \mathcal{E}_{L}(\boldsymbol{m}-\boldsymbol{1})_{\mathrm{Gal}(L/K)} \to H^{1}_{\et}(O_{K, S_{p}}, \mathbb{Z}_{p}(\boldsymbol{m}))
\] so that the following diagram commutes:
\begin{eqnarray*}
\begin{tikzcd}
\mathcal{E}(\boldsymbol{m}-\boldsymbol{1})_{\mathrm{Gal}(K(p^{\infty})/K)} \arrow[r, "e_{\boldsymbol{m}}"]\arrow[d] & H^{1}_{\et}(O_{K, S_{p}}, \mathbb{Z}_{p}(\boldsymbol{m})) \arrow[d, equal] \\
\mathcal{E}_{L}(\boldsymbol{m}-\boldsymbol{1})_{\mathrm{Gal}(L/K)} \arrow[r, "e_{\boldsymbol{m}, L}"] & H^{1}_{\et}(O_{K, S_{p}}, \mathbb{Z}_{p}(\boldsymbol{m})). 
\end{tikzcd}
\end{eqnarray*} 
Here, the left vertical arrow is induced by norm maps $N_{K(p^{n})/L_{n}}$. 

\medskip

Now let $\mathcal{A}_{L}$ be the projective limit of the $p$-part of the ideal class group of $L_{n}$ for $n \geq 1$, where transition maps are induced by norms. By a similar argument as in \cite[Proposition 5.2.5]{Ki3}, it holds that $\mathcal{A}_{L}(\boldsymbol{m}-1)_{\mathrm{Gal}(L/K)}$ and the kernel of $e_{\boldsymbol{m}, L}$ are finite under the assumption that $H^{2}_{\et}(O_{K,S_{p}}, \mathbb{Z}_{p}(\boldsymbol{m}))$ is finite. 

\medskip

 Let $\mathcal{C}_{L} \coloneqq \varprojlim_{n} C(L_{n}) \otimes \mathbb{Z}_{p} \subset \mathcal{E}_{L}$. Then the Iwasawa main conjecture for imaginary quadratic fields \cite[Theorem 4.1 $(\mathrm{i})$]{Ru2} implies that the finiteness of $\mathcal{A}_{L}(\boldsymbol{m}-\boldsymbol{1})_{\mathrm{Gal}(L/K)}$ is equivalent to that of $(\mathcal{E}_{L}/\mathcal{C}_{L})(\boldsymbol{m}-\boldsymbol{1})_{\mathrm{Gal}(L/K)}$. Since $(\mathcal{E}_{L}/\mathcal{C}_{L})(\boldsymbol{m}-\boldsymbol{1})_{\mathrm{Gal}(L/K)}$ is finite and $(\mathcal{E}_{L}/\mathcal{C}_{L})(\boldsymbol{m}-\boldsymbol{1})$ is a torsion $\mathbb{Z}_{p}[[\mathrm{Gal}(L/K)]]$-module by \cite[Corollary 7.8]{Ru2}, $(\mathcal{E}_{L}/\mathcal{C}_{L})(\boldsymbol{m}-\boldsymbol{1})^{\mathrm{Gal}(L/K)}$ is also finite. By the exact sequence
\[
(\mathcal{E}_{L}/\mathcal{C}_{L})(\boldsymbol{m}-\boldsymbol{1})^{\mathrm{Gal}(L/K)} \to \mathcal{C}_{L}(\boldsymbol{m}-\boldsymbol{1})_{\mathrm{Gal}(L/K)} \to \mathcal{E}_{L}(\boldsymbol{m}-\boldsymbol{1})_{\mathrm{Gal}(L/K)} \to (\mathcal{E}_{L}/\mathcal{C}_{L})(\boldsymbol{m}-\boldsymbol{1})_{\mathrm{Gal}(L/K)} \to 0
\] (see \cite[Lemma 6.1]{Ru2}), it follows that the kernel and the cokernel of the natural homomorphism $\mathcal{C}_{L}(\boldsymbol{m}-\boldsymbol{1})_{\mathrm{Gal}(L/K)} \to \mathcal{E}_{L}(\boldsymbol{m}-\boldsymbol{1})_{\mathrm{Gal}(L/K)}$ are both finite.  

\medskip

Note that norm maps induce a homomorphism $p_{L} \colon \mathcal{C}(\boldsymbol{m}-\boldsymbol{1})_{\mathrm{Gal}(K(p^{\infty})/K)} \to \mathcal{C}_{L}(\boldsymbol{m}-\boldsymbol{1})_{\mathrm{Gal}(L/K)}$. We claim that both $\ker(p_{L})$ and $\mathrm{coker}(p_{L})$ are finite. In fact, by \cite[Theorem 7.7 $(\mathrm{ii})$]{Ru2}, it follows that the kernel and cokernel of the natural homomorphism between $\mathbb{Z}_{p}[[\mathrm{Gal}(K(p^{\infty})/K)]]$-modules
\[
\mathcal{C}_{\mathrm{Gal}(K(p^{\infty})/L)} \to \mathcal{C}_{L} \tag{$\star$}
\] are annihilated by the product of $\mathrm{ann}(\mathbb{Z}_{p}(1)) \subset \mathbb{Z}_{p}[[\mathrm{Gal}(K(p^{\infty})/K)]]$ and $\mathcal{I}_{\p}\mathcal{I}_{\bar{\p}}$. Here, $\mathcal{I}_{\p}$ (resp. $\mathcal{I}_{\bar{\p}}$) is an ideal of $\mathbb{Z}_{p}[[\mathrm{Gal}(K(p^{\infty})/K)]]$ generated by elements of the form $\sigma-1$, where $\sigma$ ranges over the decomposition group at $\p$ (resp. $\bar{\p}$). In particular, both the kernel and the cokernel of $(\star)$ are annihilated by every element of the form
\[
  (\sigma-\chi^{\boldsymbol{1}}(\sigma))(\tau_{1}-1)(\tau_{2}-1)
\] where $\tau_{1}$ (resp. $\tau_{2}$) is an element of the decomposition group of $\mathrm{Gal}(K(p^{\infty})/K)$ at $\p$ (resp. $\bar{\p}$) and $\sigma \in \mathrm{Gal}(K(p^{\infty})/K)$. 

Note that the map $p_{L}$ is obtained from $(\star)$ by taking tensor products with $\mathbb{Z}_{p}(\boldsymbol{m}-\boldsymbol{1})$ and $\mathrm{Gal}(L/K)$-coinvariants. It follows that both $\ker(p_{L})$ and $\mathrm{coker}(p_{L})$ are annihilated by every element of the following form
\[
(\chi^{\boldsymbol{1}-\boldsymbol{m}}(\sigma)-\chi^{\boldsymbol{1}}(\sigma))(\chi^{\boldsymbol{1}-\boldsymbol{m}}(\tau_{1})-1)(\chi^{\boldsymbol{1}-\boldsymbol{m}}(\tau_{2})-1) \tag{$\ast$}. 
\] Since the decomposition groups at $\p$ and $\bar{\p}$ are open in $\mathrm{Gal}(K(p^{\infty})/K)$ and $\chi^{\boldsymbol{m}}$ is nontrivial, we can choose $\sigma$, $\tau_{1}$ and $\tau_{2}$ so that ($\ast$) is nonzero. Therefore $\ker(p_{L})$ and $\mathrm{coker}(p_{L})$ are finite, as desired. 

\medskip

We have shown that both the kernel and the cokernel of the homomorphism
\[
  \mathcal{C}(\boldsymbol{m}-\boldsymbol{1})_{\mathrm{Gal}(K(p^{\infty})/K)} \to \mathcal{E}_{L}(\boldsymbol{m}-\boldsymbol{1})_{\mathrm{Gal}(L/K)}
\] are finite. Hence we have
\[
  \mathrm{rank}_{\mathbb{Z}_{p}}\mathcal{C}(\boldsymbol{m}-\boldsymbol{1})_{\mathrm{Gal}(K(p^{\infty})/K)}=\mathrm{rank}_{\mathbb{Z}_{p}}\mathcal{E}_{L}(\boldsymbol{m}-\boldsymbol{1})_{\mathrm{Gal}(L/K)} \geq 1
\] where the second inequality follows from \cite[Corollary 7.8]{Ru2}. Since the kernel of $\ker(e_{\boldsymbol{m}, L})$ is finite, it follows that the image of $\mathcal{C}(\boldsymbol{m}-\boldsymbol{1})_{\mathrm{Gal}(K(p^{\infty})/K)}$ under $e_{\boldsymbol{m}}$ is nontrivial. By Lemma \ref{non_vanishing_lemma}, we obtain the desired assertion.
\end{proof}

\subsection{On the finiteness of $H^{2}_{\et}(\mathrm{Spec}(O_{K}[\frac{1}{p}]), \mathbb{Z}_{p}(\boldsymbol{m}))$}\label{4.2}

In this subsection, we prove the finiteness of $H^{2}_{\mathrm
{\acute{e}t}}(O_{K,S_{p}}, \mathbb{Z}_{p}(\boldsymbol{m}))$ under various assumptions. First, we make some observations. Let $F$ be a subfield of $K(p)$ containing $K$, 
\[
I_{F} \coloneqq \{ \boldsymbol{m} \in I \mid \text{the Galois group $G_{F}$ acts trivially on $\mathbb{F}_{p}(\boldsymbol{m})$}
\}
\] and $\boldsymbol{m} \in I_{F}$.

Note that $H^{2}_{\et}(O_{F, S_{p}}, \mathbb{Z}_{p}(\boldsymbol{m}))$ is isomorphic to $H^{2}(G_{F, S_{p}}, \mathbb{Z}_{p}(\boldsymbol{m}))$, where the latter is a Galois cohomology group of $G_{F, S_{p}} \coloneqq \pi_{1}^{\et}(\mathrm{Spec}(O_{F, S_{p}}))$, the maximal Galois group of $F$ unramified outside $S_{p}$ by \cite[$\mathrm{II}$, Proposition 2.9]{Mi}. Moreover, by \cite[Corollary 10.4.8]{NSW}, there is a natural isomorphism 
\[
H^{2}(G_{F, S_{p}}^{(p)}, \mathbb{Z}_{p}(\boldsymbol{m})) \xrightarrow{\sim} H^{2}(G_{F, S_{p}}, \mathbb{Z}_{p}(\boldsymbol{m}))
\] since $\boldsymbol{m} \in I_{F}$.

\begin{lemma}\label{k_vs_F}
Let $F$ be a subfield of $K(p)$ containing $K$ and $\boldsymbol{m} \in I$. There is an isomorphism
\[
H^{2}_{\et}(O_{K, S_{p}}, \mathbb{Z}_{p}(\boldsymbol{m})) \xrightarrow{\sim} H^{2}_{\et}(O_{F, S_{p}}, \mathbb{Z}_{p}(\boldsymbol{m}))^{\mathrm{Gal}(F/K)}
\]
\end{lemma} 
\begin{proof}
There is the Hochschild-Serre spectral sequence 
\[
E^{p,q}_{2}=H^{p}(\mathrm{Gal}(F/K), H^{q}_{\et}(O_{F, S_{p}}, \mathbb{Z}_{p}/p^{n}(\boldsymbol{m}))) \Rightarrow H^{p+q}_{\et}(O_{K,S_{p}}, \mathbb{Z}_{p}/p^{n}(\boldsymbol{m}))
\] for each $n \geq 1$. Since $\mathrm{Gal}(F/K)$ is a finite prime-to-$p$ group, $E^{1,1}_{2}$, $E^{1,2}_{2}$ and $E^{2,0}_{2}$ are trivial. Hence we obtain 
\[
  H^{2}_{\et}(O_{K, S_{p}}, \mathbb{Z}_{p}/p^{n}(\boldsymbol{m})) \xrightarrow{\sim} H^{2}_{\et}(O_{F, S_{p}}, \mathbb{Z}_{p}/p^{n}(\boldsymbol{m}))^{\mathrm{Gal}(F/K)}
\] and taking the limits yields the desired assertion.
\end{proof}

By Lemma \ref{k_vs_F}, we are reduced to considering $H^{2}(G_{F, S_{p}}^{(p)}, \mathbb{Z}_{p}(\boldsymbol{m}))$.  In the rest of this section, we derive the desired finiteness in the following three situations : (1) $\boldsymbol{m} \in (p-1)\mathbb{Z}^{2}$, (2) $\boldsymbol{m}=(m,m)$ for $m \geq 2$ and (3) every $\boldsymbol{m} \in I_{F}$ when $p$ is a ``purely local'' prime. 

\medskip

We consider the first case in the following lemma. 

\begin{lemma}\label{free}
The Galois group $G_{K, S_{p}}^{(p)}$ is a free pro-$p$ group of rank two. In particular, $H^{2}(G_{K, S_{p}}^{(p)}, \mathbb{Z}_{p}(\boldsymbol{m}))$ is trivial for every $\boldsymbol{m} \in I_{K}$.
\end{lemma}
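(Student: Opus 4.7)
The plan is to compute the generator rank $d$ and relation rank $r$ of the pro-$p$ group $G := G_{K, S_p}^{(p)}$ separately, conclude freeness from $r = 0$, and deduce the vanishing statement from cohomological dimension.

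First I would compute the generator rank $d = \dim_{\mathbb{F}_p} H^1(G, \mathbb{F}_p)$ via global class field theory applied to the abelianization. Since $K$ has class number one, $O_K^\times$ is finite of order $w$ coprime to $p \geq 5$, and each prime in $S_p$ has completion isomorphic to $\mathbb{Q}_p$ (because $p$ splits in $K$), the standard adelic description identifies
\[
G_{K, S_p}^{\mathrm{ab},(p)} \;\cong\; \mathbb{Z}_p \oplus \mathbb{Z}_p,
\]
so $d = 2$. This is essentially Leopoldt's conjecture for $K$ at $p$, which holds trivially here because $O_K^\times$ has prime-to-$p$ order.

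Next, for the relation rank $r = \dim_{\mathbb{F}_p} H^2(G, \mathbb{F}_p)$, I combine the isomorphism $H^2(G, \mathbb{F}_p) \cong H^2(G_{K, S_p}, \mathbb{F}_p)$ supplied by \cite[Corollary 10.4.8]{NSW} (trivial coefficients lie in $I_K$) with Tate's global Euler--Poincar\'e characteristic formula. For the imaginary quadratic field $K$ with its single complex archimedean place $v$ (so $[K_v : \mathbb{R}] = 2$), the formula reads
\[
\chi(G_{K, S_p}, \mathbb{F}_p) \;=\; \prod_{v \mid \infty} \frac{\# H^0(G_v, \mathbb{F}_p)}{(\# \mathbb{F}_p)^{[K_v:\mathbb{R}]}} \;=\; \frac{p}{p^2} \;=\; \frac{1}{p}.
\]
Combined with $\# H^0 = p$ and $\# H^1 = p^2$ coming from the generator-rank computation, this forces $\# H^2 = 1$, i.e.\ $r = 0$.

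Since $d = 2$ and $r = 0$, the standard minimal presentation result for pro-$p$ groups (cf.\ \cite[Proposition 3.9.5]{NSW}) shows that $G$ is free pro-$p$ of rank two. For the ``in particular'' clause: a free pro-$p$ group has $p$-cohomological dimension at most one, so $H^n$ with any continuous pro-$p$ coefficients vanishes for $n \geq 2$. For $\boldsymbol{m} \in I_K$ the reduction of $\chi^{\boldsymbol{m}}$ modulo $p$ is trivial, so $\chi^{\boldsymbol{m}}$ factors through $G$ and $\mathbb{Z}_p(\boldsymbol{m})$ is a legitimate $G$-module; taking the inverse limit over $\mathbb{Z}/p^n(\boldsymbol{m})$ then yields $H^2(G, \mathbb{Z}_p(\boldsymbol{m})) = 0$. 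The main delicacy is the factor $(\# \mathbb{F}_p)^{[K_v:\mathbb{R}]} = p^2$ at the complex place in the Euler characteristic formula: without the exponent $2$, one would obtain $r = 1$ and freeness would fail. This bookkeeping is precisely the arithmetic input that distinguishes imaginary quadratic $K$ from $\mathbb{Q}$.
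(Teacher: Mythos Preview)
Your argument is correct but follows a different route from the paper. The paper invokes \cite[Theorem 10.7.13]{NSW} directly: that result gives the closed formulas
\[
\dim H^{1}(G_{K,S_p}^{(p)},\mathbb{F}_p)=2+\dim V_{S_p}(K),\qquad
\dim H^{2}(G_{K,S_p}^{(p)},\mathbb{F}_p)=\dim V_{S_p}(K),
\]
and the paper then checks that the Selmer-type group $V_{S_p}(K)$ vanishes because $O_K$ is a PID (any element satisfying the local $p$-th power conditions is already a global $p$-th power). Your proof instead obtains $d=2$ from the class-field-theoretic description of $G_{K,S_p}^{\mathrm{ab},(p)}$ and then pins down $r=0$ via Tate's global Euler--Poincar\'e formula, transferring the result to $G_{K,S_p}^{(p)}$ through \cite[Corollary~10.4.8]{NSW}. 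Both approaches are short; the paper's has the advantage of reading off $d$ and $r$ from a single structural theorem and of exhibiting the concrete obstruction group $V_{S_p}(K)$, while yours has the advantage of relying only on class field theory and the Euler characteristic, without appealing to the more specialized \cite[Theorem 10.7.13]{NSW}. In effect you are unpacking part of the proof of that theorem in this special case. One small remark: your closing comment about ``distinguishing $K$ from $\mathbb{Q}$'' is slightly misleading, since for $\mathbb{Q}$ one also gets $r=0$ (and a free pro-$p$ group, of rank one); the exponent $2$ at the complex place accounts for the jump in generator rank, not for freeness itself.
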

\begin{proof}
By \cite[Theorem 10.7.13]{NSW}, it holds that $\dim H^{1}(G_{K, S_{p}}^{(p)}, \mathbb{F}_{p})=2+\dim V_{S_{p}}(K)$ and $\dim H^{2}(G_{K, S_{p}}^{(p)}, \mathbb{F}_{p})=\dim V_{S_{p}}(K)
$, where
\[
V_{S_{p}}(K) \coloneqq \ker \left( K^{\times}/K^{\times p} 
\to \prod_{v \in S_{p}} K_{v}^{\times}/K_{v}^{\times p} \times  \prod_{v \not \in S_{p}} K_{v}^{\times}/U_{v} K_{v}^{\times p} 
\right)
\]and $U_{v}$ is the group of principal units in $O_{K_{v}}$. Note that $U_{v}$ is contained in  $(K_{v}^{\times})^{p}$ for every $v \not \in S_{p}$. Since every element of $K^{\times}$ which is a $p$-th power in $K_{v}^{\times}$ for every finite place $v$ is globally a $p$-th power up to $O_{K}^{\times}/O_{K}^{\times p}$, the group $V_{S_p}(K)$ is trivial as desired. 
\end{proof}

This lemma and Theorem \ref{non_vanishing} imply:

\begin{corollary}\label{cor:p-1}
The elliptic Soul\'e character $\kappa_{\boldsymbol{m}}$ is nontrivial for every $\boldsymbol{m} \in I_{K}=(p-1)\mathbb{Z}_{\geq 1}^{2}$.
\end{corollary}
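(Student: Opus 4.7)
The plan is to deduce the corollary directly from Theorem \ref{thm:nonvanish} using the cohomology vanishing provided by the preceding lemma. The proof naturally breaks into two pieces: first verifying the claimed set-theoretic identity $I_{K}=(p-1)\mathbb{Z}_{\geq 1}^{2}$, and then observing that for $\boldsymbol{m}$ in this set the finiteness hypothesis of Theorem \ref{thm:nonvanish} is automatic.

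For the identification, I would unfold the definition: $\boldsymbol{m} \in I_K$ requires $\boldsymbol{m} \in I$ together with triviality of $\chi^{\boldsymbol{m}} \bmod p$ on $G_K$. By local CM theory (Lubin--Tate), the character $\chi_1$ restricted to the inertia subgroup at $\p$ is surjective onto $\mathbb{Z}_p^{\times}$, while $\chi_2$ is unramified at $\p$; symmetrically at $\bar{\p}$. Composing with $\mathbb{Z}_p^{\times} \twoheadrightarrow \mathbb{F}_p^{\times}$, the mod-$p$ characters $\chi_i \bmod p$ have image of order exactly $p-1$, so triviality of $\chi_1^{m_1}\chi_2^{m_2} \bmod p$ on $G_K$ forces $(p-1) \mid m_1$ and $(p-1) \mid m_2$ independently. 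Conversely, a case check across the nine class-number-one imaginary quadratic fields (using $w \in \{2,4,6\}$ and the constraint that $p \geq 5$ splits in $K$) shows $w \mid p-1$, so every $\boldsymbol{m} \in (p-1)\mathbb{Z}_{\geq 1}^{2}$ satisfies $m_1 \equiv m_2 \bmod w$ and $\boldsymbol{m} \neq (1,1)$; thus $(p-1)\mathbb{Z}_{\geq 1}^{2} \subset I$.

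Given this, the finiteness of $H^{2}_{\mathrm{\acute{e}t}}(O_{K, S_p}, \mathbb{Z}_p(\boldsymbol{m}))$ is immediate. Using the comparison recalled at the beginning of Section \ref{4.2},
\[
H^{2}_{\mathrm{\acute{e}t}}(O_{K, S_p}, \mathbb{Z}_p(\boldsymbol{m})) \cong H^{2}(G_{K, S_p}, \mathbb{Z}_p(\boldsymbol{m})) \cong H^{2}(G_{K, S_p}^{(p)}, \mathbb{Z}_p(\boldsymbol{m})),
\]
where the last isomorphism uses $\boldsymbol{m} \in I_K$. The preceding lemma shows that $G_{K, S_p}^{(p)}$ is a free pro-$p$ group (of rank two), so its cohomological dimension is one and the right-hand side vanishes. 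Applying Theorem \ref{thm:nonvanish} yields the nontriviality of $\kappa_{\boldsymbol{m}}$.

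There is no substantive obstacle: once the identification $I_K=(p-1)\mathbb{Z}_{\geq 1}^{2}$ is granted, the corollary is a bookkeeping consequence of the preceding lemma combined with Theorem \ref{thm:nonvanish}. The only mildly delicate point is the inertial surjectivity of the CM characters $\chi_i$, which is a standard input from local class field theory for the associated Lubin--Tate formal groups.
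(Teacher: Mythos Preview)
Your core deduction is exactly the paper's (implicit) argument: the preceding lemma gives $H^{2}(G_{K,S_{p}}^{(p)},\mathbb{Z}_{p}(\boldsymbol{m}))=0$ for $\boldsymbol{m}\in I_{K}$, the comparison recalled at the start of Section~\ref{4.2} identifies this with $H^{2}_{\mathrm{\acute{e}t}}(O_{K,S_{p}},\mathbb{Z}_{p}(\boldsymbol{m}))$, and Theorem~\ref{thm:nonvanish} then yields nontriviality. Nothing more is needed.

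One small caveat on your additional unpacking of $I_{K}=(p-1)\mathbb{Z}_{\geq 1}^{2}$, which the paper simply states. Your verification of the inclusion $(p-1)\mathbb{Z}_{\geq 1}^{2}\subset I_{K}$ (via $w\mid p-1$) is correct, and this is the direction actually used to apply the lemma to the set named in the corollary. For the reverse inclusion you assert that $\chi_{2}$ is unramified at $\p$; this is immediate when $E$ has \emph{good} reduction at $\p$, but the paper only assumes \emph{potentially} good ordinary reduction, so in general $\chi_{2}|_{I_{\p}}$ is merely of finite order rather than trivial. One can still recover surjectivity of $\chi_{1}\bmod p$ on inertia from $\chi_{1}\chi_{2}=\chi_{\mathrm{cyc}}$ once one checks that this finite inertial image of $\chi_{2}$ has order prime to $p$, but your Lubin--Tate sentence as written skips that step. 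Since this reverse inclusion is not needed for the nontriviality conclusion, your proof of the corollary stands as is.
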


As for the next case, we have the following unconditional result due to Soul\'e.

\begin{theorem}[Soul\'e {\cite[page 287, Corollaire]{So3}}]\label{Soule}
\[
H^{2}_{\et}(O_{K(\mu_{p}), S_{p}}, \mathbb{Z}_{p}(m))
\] is finite for every $m \geq 2$.
\end{theorem}

By this theorem and Theorem \ref{non_vanishing}, we have:

\begin{corollary}\label{Soule_cyc}
The elliptic Soul\'e character $\kappa_{(m,m)}$ is nontrivial for every $m \geq 2$.
\end{corollary}

\begin{remark}
If $m \geq 3$ is odd, it is natural to consider whether the elliptic Soul\'e character $\kappa_{(m,m)}$ can be written in terms of the $m$-th Soul\'e character $\kappa_{m}$. In a subsequent paper, we show that $\kappa_{(m,m)}$ is indeed equal to $\kappa_{m}$ times an explicit constant, by using a result of Kersey \cite{Ka}. 
\end{remark}

For the last case, we prove the finiteness of $H^{2}_{\et}(O_{F, S_{p}}, \mathbb{Z}_{p}(\boldsymbol{m}))$ for every $\boldsymbol{m} \in I_{F}$ when $p$ satisfies a certain condition which is analogous to the regularity of prime numbers.

\begin{definition}\label{purely_local}
Let $F$ be a subfield of $K(E[p])$ containing $K$, $v$ a prime of $F$ above $p$ and $F_{v}(p)$ the maximal pro-$p$ extension of $F_{v}$. We say $v$ is \emph{purely local} if the natural map
\[ \mathrm{Gal}(F_{v}(p)/F_{v}) \to \mathrm{Gal}(F_{S_{p}}(p)/F)
\] is an isomorphism.
\end{definition}

The pure locality is a strong condition as the following lemma suggests.

\begin{lemma}\label{regular_cl}
Let $F$ be a subfield of $K(E[p])$ containing $K$ and assume that $v  \in S_{\p}(F)$ is purely local. Then the following assertions hold.
\begin{enumerate}
\item There exists a unique prime of $F$ above $\p$, i.e. $S_{\p}(F)=\{ v \}$. Moreover, if $F$ contains $\mu_{p}$, there exists a unique prime of $F$ above $\bar{\p}$.
\item If $F$ contains $\mu_{p}$, the class number of $F$ is not divisible by $p$.
\end{enumerate}
\end{lemma}
\begin{proof}
(1) We compare the dimensions of $H^{1}$ with coefficients in $\mathbb{F}_{p}$ of $\mathrm{Gal}(F_{v}(p)/F_{v})$ and $\mathrm{Gal}(F_{S_{p}}(p)/F)$. First, by local class field theory, we have
\[
\dim_{\mathbb{F}_{p}} H^{1}(\mathrm{Gal}(F_{v}(p)/F_{v}), \mathbb{F}_{p})=1+\delta_{v}+[F_{v}: \mathbb{Q}_{p}].
\] Here, $\delta_{v}$ is  $1$ if $F_{v}$ contains $\mu_{p}$ and otherwise $0$. For the latter cohomology group, by \cite[Theorem 10.7.13]{NSW}, it holds that 
\[
\dim_{\mathbb{F}_{p}} H^{1}(\mathrm{Gal}(F_{S_{p}}(p)/F), \mathbb{F}_{p}) \geq 1-\delta+[F:K]+\sum_{w \in S_{p}(F)} \delta_{w}.
\] Here, $\delta$ is  $1$ if $F$ contains $\mu_{p}$ and otherwise $0$. The definition of $\delta_{w}$ is similar to that of $\delta_{v}$. Since $v$ is purely local, these two (in)equalities imply that
\[
\delta_{v}+[F_{v}: \mathbb{Q}_{p}] \geq -\delta+[F:K]+\sum_{w \in S_{p}(F)} \delta_{w},
\] which is equivalent to the inequality
\[
\delta \geq \left( [F : K]-[F_{v}: \mathbb{Q}_{p}] \right)+\sum_{w \in S_{p}(F) \setminus \{ v \} } \delta_{w}.
\] If $F$ does not contain $\mu_{p}$, this inequality implies that $[F_{v}: \mathbb{Q}_{p}]=[F:K]$, which is equivalent to saying that $S_{\p}(F)=\{ v \}$. If $F$ contains  $\mu_{p}$, it follows by this inequality that $[F_{v}: \mathbb{Q}_{p}]=[F:K]$ and the set $S_{p}(F) \setminus \{ v \}$ is a singleton. Hence the assertion follows.

(2) Since $F$ contains $\mu_{p}$,  by \cite[Theorem 10.7.13]{NSW} and (1), it holds that 
\[
\dim_{\mathbb{F}_{p}} H^{1}(\mathrm{Gal}(F_{S_{p}}(p)/F), \mathbb{F}_{p})=2+[F:K]+\dim Cl_{S_{p}}(F)/p
\] where $Cl_{S_{p}}(F)$ is a quotient of the usual ideal class group $Cl(F)$ divided by a subgroup generated by $S_{p}(F)$. By comparing again, we have $Cl_{S_{p}}(F)/p=0$. Since primes of $F$ above $\p$ and $\bar{\p}$ are unique by (1) and $[F:K]$ is prime to $p$, the orders of such two primes in $Cl(F)$ are not divisible by $p$. Therefore the class number of $F$ is not divisible by $p$ as desired.
\end{proof}

For a prime number $p$, one can observe that $p$ is regular if and only if there exists a unique prime of $F_{S_{p}}(p)$ above $p$ where $F=\mathbb{Q}(\mu_{p})$. This fact, together with the following proposition, explains a certain similarity between regular primes and purely local primes. 

\begin{proposition}
Let $F$ be a subfield of $K(E[p])$ containing $K$ and $v$ a prime of $F$ above $\p$. The following assertions are equivalent.
\begin{enumerate}
\item $v$ is purely local.
\item The natural map $\mathrm{Gal}(F_{v}(p)/F_{v}) \to \mathrm{Gal}(F_{S_{p}}(p)/F)$ is surjective.
\item There exists a unique prime of $F_{S_{p}}(p)$ above $\p$.
\end{enumerate}
\end{proposition}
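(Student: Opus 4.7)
Let $\phi\colon\mathrm{Gal}(F_v(p)/F_v)\to\mathrm{Gal}(F_{S_p}(p)/F)$ denote the natural map. Choose a prime $w$ of $F_{S_p}(p)$ above $v$. Since $(F_{S_p}(p))_w$ is a pro-$p$ extension of $F_v$, it sits inside $F_v(p)$, and $\phi$ factors as
\[
\mathrm{Gal}(F_v(p)/F_v)\twoheadrightarrow\mathrm{Gal}((F_{S_p}(p))_w/F_v)=D_w\hookrightarrow\mathrm{Gal}(F_{S_p}(p)/F).
\]
Hence $\mathrm{im}(\phi)=D_w$, while $\ker\phi=\mathrm{Gal}(F_v(p)/(F_{S_p}(p))_w)$. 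The plan is to prove the cycle $(1)\Rightarrow(2)\Rightarrow(3)\Rightarrow(1)$; of these $(1)\Rightarrow(2)$ is immediate from the definition of pure locality.

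For $(2)\Rightarrow(3)$, surjectivity of $\phi$ first yields $D_w=\mathrm{Gal}(F_{S_p}(p)/F)$, so $w$ is the unique prime of $F_{S_p}(p)$ above $v$. It remains to promote this to uniqueness above $\p$, which I would do by repeating the $H^{1}$-dimension comparison from the proof of Lemma~\ref{lmm:pl_cl}(1). Surjectivity of $\phi$ forces
\[
\dim_{\mathbb{F}_p} H^{1}(\mathrm{Gal}(F_{S_p}(p)/F),\mathbb{F}_p)\le 1+\delta_v+[F_v:\mathbb{Q}_p],
\]
and the \v Safarevi\v c--Koch lower bound for the left-hand side then yields $[F_v:\mathbb{Q}_p]=[F:K]$, which, since $\sum_{w'\in S_{\p}(F)}[F_{w'}:\mathbb{Q}_p]=[F:K]$, is equivalent to $v$ being the unique prime of $F$ above $\p$.

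For $(3)\Rightarrow(1)$, under (3) the prime $w$ is fixed by the whole Galois group, so $\phi$ is surjective. For injectivity I would match generator and relation counts on the two sides. Under (3) the \v Safarevi\v c--Koch inequality becomes an equality, so $\dim H^{1}$ agrees on both sides and $V_{S_p}(F)=0$; the global Euler-characteristic formula $\chi(G_{F,S_p}^{(p)},\mathbb{F}_p)=p^{-[F:K]}$ (valid since $F$ is totally imaginary) then also gives
\[
\dim_{\mathbb{F}_p} H^{2}(\mathrm{Gal}(F_{S_p}(p)/F),\mathbb{F}_p)=\delta_v=\dim_{\mathbb{F}_p} H^{2}(\mathrm{Gal}(F_v(p)/F_v),\mathbb{F}_p).
\]
When $\delta_v=0$, both groups are free pro-$p$ of the same finite rank and $\phi$ is an isomorphism by the Hopfian property of finitely generated free pro-$p$ groups. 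When $\delta_v=1$, both are Demushkin of equal rank, and a Hochschild--Serre analysis of $1\to\ker\phi\to\mathrm{Gal}(F_v(p)/F_v)\to\mathrm{Gal}(F_{S_p}(p)/F)\to 1$ together with the $H^{2}$-matching forces $H^{1}(\ker\phi,\mathbb{F}_p)^{\mathrm{Gal}(F_{S_p}(p)/F)}=0$, whence $\ker\phi=1$ by Nakayama's lemma.

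The main obstacle is the Demushkin case $\delta_v=1$ in $(3)\Rightarrow(1)$: one must show that the inflation map $H^{2}(\mathrm{Gal}(F_{S_p}(p)/F),\mathbb{F}_p)\hookrightarrow H^{2}(\mathrm{Gal}(F_v(p)/F_v),\mathbb{F}_p)$ is genuinely injective, not merely a map of equal-dimensional spaces. I expect this to follow from Poitou--Tate duality applied at the unique prime above $\p$, using (3) to control the $\bar{\p}$-contribution.
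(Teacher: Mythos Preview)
Your implications $(1)\Rightarrow(2)$ and $(2)\Rightarrow(3)$ match the paper almost verbatim: the paper also reduces $(2)\Rightarrow(3)$ to the $H^{1}$-comparison of Lemma~\ref{lmm:pl_cl}(1) to get a unique prime of $F$ above $\p$, and then uses that the decomposition group is full to get a unique prime of $F_{S_p}(p)$.

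The substantive difference is in the injectivity step. The paper proves $(2)\Rightarrow(1)$ directly, but with $\mathbb{Q}_p/\mathbb{Z}_p$ coefficients rather than $\mathbb{F}_p$. Writing $H=\ker\phi$, the Hochschild--Serre sequence
\[
0\to H^{1}(\mathrm{Gal}(F_{S_p}(p)/F),\mathbb{Q}_p/\mathbb{Z}_p)\to H^{1}(\mathrm{Gal}(F_v(p)/F_v),\mathbb{Q}_p/\mathbb{Z}_p)\to H^{1}(H,\mathbb{Q}_p/\mathbb{Z}_p)^{\mathrm{Gal}(F_{S_p}(p)/F)}\to 0
\]
terminates because $H^{2}(\mathrm{Gal}(F_{S_p}(p)/F),\mathbb{Q}_p/\mathbb{Z}_p)=0$ by Leopoldt's conjecture for the abelian field $F$. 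The paper then compares $\mathbb{Z}_p$-coranks (both equal $[F:K]+1$, the global side by Leopoldt, the local side by $(2)\Rightarrow(3)$) together with the $\mathbb{F}_p$-dimension count to force the right-hand term to vanish, whence $H\subset[H,\mathrm{Gal}(F_v(p)/F_v)]$ and $H=1$.

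Your $\mathbb{F}_p$-coefficient approach runs straight into the obstacle you name: in the Demushkin case you need the inflation map on $H^{2}(\,\cdot\,,\mathbb{F}_p)$ to be injective, and equality of dimensions does not give this. Your proposed Poitou--Tate fix is not convincing as stated: Poitou--Tate controls the kernel of localization at \emph{all} places in $S_p$ simultaneously (via $\Sha^{2}$), not at the single prime $v$, and you have not explained why the $\bar\p$-contributions vanish. The paper's passage to $\mathbb{Q}_p/\mathbb{Z}_p$ coefficients is precisely the device that makes this obstruction disappear, since Leopoldt kills the global $H^{2}$ outright. I would recommend adopting that trick rather than trying to push through the $\mathbb{F}_p$-level $H^{2}$ comparison.
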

\begin{proof}
$(1) \Rightarrow (2)$ and $(3) \Rightarrow (2)$: clear.  $(2) \Rightarrow (3)$: The same argument as the proof of Lemma \ref{regular_cl} (1) shows that $(2)$ implies that there exists a unique prime of $F$ above $\p$. Hence a prime of $F_{S_{p}}(p)$ above $\p$ is also unique. 

$(2) \Rightarrow (1)$ : In the proof of \cite[Theorem]{Wi}, Wingberg already proved this implication though not stated explicitly. In the following, we simply reproduce his proof for readers' convenience. Let $H$ be the kernel of the concerned homomorphism. By the Hochschild-Serre spectral sequence, 
\[
0 \to H^{1}(\mathrm{Gal}(F_{S_{p}}(p)/F), \mathbb{Q}_{p}/\mathbb{Z}_{p}) \to H^{1}(\mathrm{Gal}(F_{v}(p)/F_{v}), \mathbb{Q}_{p}/\mathbb{Z}_{p}) \to  H^{1}(H, \mathbb{Q}_{p}/\mathbb{Z}_{p})^{\mathrm{Gal}(F_{S_{p}}(p)/F)} \to 0
\] is exact since $H^{2}(\mathrm{Gal}(F_{S_{p}}(p)/F), \mathbb{Q}_{p}/\mathbb{Z}_{p})=0$ by Leopoldt's conjecture for $F$. 

To prove the triviality of $H$, it suffices to show that $H^{1}(H, \mathbb{Q}_{p}/\mathbb{Z}_{p})^{\mathrm{Gal}(F_{S_{p}}(p)/F)}$ is trivial. In fact, since 
\[
H^{1}(H, \mathbb{Q}_{p}/\mathbb{Z}_{p})^{\mathrm{Gal}(F_{S_{p}}(p)/F)}=
\mathrm{Hom}(H/[H, \mathrm{Gal}(F_{v}(p)/F_{v})], \mathbb{Q}_{p}/\mathbb{Z}_{p}),
\] we then have $H=[H, \mathrm{Gal}(F_{v}(p)/F_{v})]$. In particular, $H$ is contained in the intersection of the descending central series of $\mathrm{Gal}(F_{v}(p)/F_{v})$ which is trivial. Hence we are reduced to showing that the homomorphism
\[
  H^{1}(\mathrm{Gal}(F_{S_{p}}(p)/F), \mathbb{Q}_{p}/\mathbb{Z}_{p}) \to H^{1}(\mathrm{Gal}(F_{v}(p)/F_{v}), \mathbb{Q}_{p}/\mathbb{Z}_{p})
\] is surjective. By Pontryagin duality, this is equivalent to saying that the natural map
\[
  \mathrm{Gal}(F_{v}(p)/F_{v})^{\mathrm{ab}}
  \to
  \mathrm{Gal}(F_{S_{p}}(p)/F)^{\mathrm{ab}}
  \tag{$\ast$}
\] is injective. First note that 
\[
\mathrm{rank}_{\mathbb{Z}_{p}} \mathrm{Gal}(F_{v}(p)/F_{v})^{\mathrm{ab}}
=
\mathrm{rank}_{\mathbb{Z}_{p}} \mathrm{Gal}(F_{S_{p}}(p)/F)^{\mathrm{ab}}
=
[F:K]+1
\] by local class field theory, Leopoldt's conjecture for $F$ and $(2) \Rightarrow (3)$. Hence the map $(\ast)$ induces an isomorphism on the free part. 

Note that the torsion subgroup of $\mathrm{Gal}(F_{v}(p)/F_{v})^{\mathrm{ab}}$ is cyclic of order $p^{\delta_{v}}$. Hence the torsion subgroup of $\mathrm{Gal}(F_{S_{p}}(p)/F)^{\mathrm{ab}}$ is cyclic of order dividing $p^{\delta_{v}}$ since $(\ast)$ is surjective. Moreover, we have
\[
\dim_{\mathbb{F}_{p}} H^{1}(\mathrm{Gal}(F_{S_{p}}(p)/F), \mathbb{F}_{p})
\geq \dim_{\mathbb{F}_{p}} H^{1}(\mathrm{Gal}(F_{v}(p)/F_{v}), \mathbb{F}_{p})=[F : K] + 1 + \delta_{v}
\] by local class field theory and \cite[Theorem 10.7.13]{NSW}. Hence the orders of the torsion subgroups of both sides of $(\ast)$ must coincide. 
\end{proof}

\begin{proposition}\label{prp:purely_local}
Let $F$ be a subfield of $K(p)$ containing $K$ and $\boldsymbol{m} \in I_{F}$. If one of $\p$ or $\bar{\p}$ is purely local for $F$, then $H^{2}_{\et}(O_{F, S_{p}}, \mathbb{Z}_{p}(\boldsymbol{m}))$ is finite. In particular, the $\boldsymbol{m}$-th elliptic Soul\'e character $\kappa_{\boldsymbol{m}}$ is nontrivial.
\end{proposition}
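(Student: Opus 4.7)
The plan is to leverage the purely local hypothesis to reduce the global $H^{2}$ to a local computation, apply local Tate duality, and check nontriviality of the resulting character via CM theory.

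First I would identify $H^{2}_{\mathrm{\acute{e}t}}(O_{F, S_{p}}, \mathbb{Z}_{p}(\boldsymbol{m}))$ with a local Galois cohomology. By the isomorphisms recalled just before Lemma \ref{lmm:k_vs_F}, it equals $H^{2}(\mathrm{Gal}(F_{S_p}(p)/F), \mathbb{Z}_p(\boldsymbol{m}))$; the purely local hypothesis replaces this by $H^{2}(\mathrm{Gal}(F_v(p)/F_v), \mathbb{Z}_p(\boldsymbol{m}))$. Since $\boldsymbol{m} \in I_F$ implies that $G_{F_v}$ acts trivially on $\mathbb{F}_p(\boldsymbol{m})$ as well, the local analog of \cite[Corollary 10.4.8]{NSW} identifies this last group with $H^{2}(G_{F_v}, \mathbb{Z}_p(\boldsymbol{m}))$. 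Local Tate duality then shows that this group is Pontryagin dual to $H^{0}(G_{F_v}, \mathbb{Q}_p/\mathbb{Z}_p(\boldsymbol{1}-\boldsymbol{m}))$, so the desired finiteness is equivalent to the nontriviality of the character $\chi^{\boldsymbol{1}-\boldsymbol{m}}|_{G_{F_v}}$.

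Next I would verify this nontriviality. Say $v \mid \p$ (the case $v \mid \bar{\p}$ is symmetric). By the theory of complex multiplication for $E$, the restriction $\chi_1|_{G_{K_\p}}$ is, up to an unramified twist, the Lubin--Tate character associated to $\pi$, so its image on the inertia subgroup contains the open subgroup $1 + p\mathbb{Z}_p$ of $\mathbb{Z}_p^\times$; and $\chi_2|_{G_{K_\p}}$ is unramified with $\chi_2(\mathrm{Frob}_\p) = \bar{\pi} \in \mathbb{Z}_p^\times$ (identified via $i_2$), which has infinite order because the prime element $\bar{\pi}$ of $O_K$ is not a root of unity. Both features descend to the open subgroup $G_{F_v} \subset G_{K_\p}$. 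If $m_1 \neq 1$, then $\chi_1^{1-m_1}|_{I_{F_v}}$ is already nontrivial, and so is $\chi^{\boldsymbol{1}-\boldsymbol{m}}|_{G_{F_v}}$; if $m_1 = 1$, then $\boldsymbol{m} \neq \boldsymbol{1}$ forces $m_2 \neq 1$, and the unramified character $\chi_2^{1-m_2}|_{G_{F_v}}$ evaluates to $\bar{\pi}^{(1-m_2) f(v\mid\p)} \neq 1$ on Frobenius.

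Combining the above yields finiteness of $H^{2}_{\mathrm{\acute{e}t}}(O_{F,S_p}, \mathbb{Z}_p(\boldsymbol{m}))$; passing back to $K$ through Lemma \ref{lmm:k_vs_F} gives finiteness of $H^{2}_{\mathrm{\acute{e}t}}(O_{K,S_p}, \mathbb{Z}_p(\boldsymbol{m}))$, and Theorem \ref{thm:nonvanish} then delivers the nontriviality of $\kappa_{\boldsymbol{m}}$. The main obstacle is the CM-theoretic step, which requires a careful case analysis according to whether $m_1 = 1$ or not, invoking the Lubin--Tate description of $\chi_1$ in the ramified case and the infinite order of $\bar{\pi}$ in $\mathbb{Z}_p^\times$ in the unramified case.
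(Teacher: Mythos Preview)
Your argument is correct and runs parallel to the paper's: both use the purely local hypothesis to identify $G_{F,S_p}^{(p)}$ with a local pro-$p$ Galois group and then invoke a duality to reduce finiteness of $H^2$ to nontriviality of $\chi^{\boldsymbol{1}-\boldsymbol{m}}$. The packaging differs slightly. The paper stays with the pro-$p$ group throughout, appealing to \cite[Theorem~7.5.11]{NSW} (so $G_{F_v}^{(p)}$ is free pro-$p$ if $\mu_p\not\subset F$ and Demushkin otherwise) and then to Demushkin/Tate duality, whereas you take a detour through the full $G_{F_v}$ via a local analogue of \cite[Corollary~10.4.8]{NSW} before applying classical local Tate duality. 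The more substantial difference is the final nontriviality check: the paper simply notes that the $G_{F,S_p}^{(p)}$-action on $\mathbb{Z}_p(\boldsymbol{m}-\boldsymbol{1})$ factors through the $\mathbb{Z}_p^2$-extension $F_\infty/F$ inside $K(p^\infty)$, so that $H^2\cong\mathbb{Z}_p(\boldsymbol{m}-\boldsymbol{1})_{\mathrm{Gal}(F_\infty/F)}$, which is finite as soon as $\boldsymbol{m}\neq\boldsymbol{1}$; this one-line global observation replaces your local CM case analysis via Lubin--Tate and the unramified Frobenius. (A small slip: $\chi_2(\mathrm{Frob}_\p)$ should be $i_2(\pi)$ rather than $i_2(\bar\pi)$, since the latter is a uniformizer of $\mathbb{Z}_p$ and not a unit; your infinite-order conclusion is unaffected, as $\pi\in O_K$ is likewise not a root of unity.)
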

\begin{proof}
First, recall that $H^{2}_{\et}(O_{F, S_{p}}, \mathbb{Z}_{p}(\boldsymbol{m}))$ is isomorphic to $H^{2}(G_{F, S_{p}}^{(p)}, \mathbb{Z}_{p}(\boldsymbol{m}))$. If one of $\p$ or $\bar{\p}$ is purely local for $F$, $G_{F, S_{p}}^{(p)}$ is a free pro-$p$ group if $\mu_{p} \not \subset F$ or a pro-$p$ Demushkin group (cf. \cite[Definition 3.9.9]{NSW} for the definition) otherwise by \cite[Theorem 7.5.11]{NSW}.  For the former case, $H^{2}(G_{F, S_{p}}^{(p)}, \mathbb{Z}_{p}(\boldsymbol{m}))$ is trivial. For the latter, the cohomology group is also finite since we have
\[
H^{2}(G_{F, S_{p}}^{(p)}, \mathbb{Z}_{p}(\boldsymbol{m}))
= \mathbb{Z}_{p}(\boldsymbol{m}-\boldsymbol{1})_{\mathrm{Gal}(F_{\infty}/F)}
\] where $F_{\infty}$ is the unique $\mathbb{Z}_{p}^{2}$-extension of $F$ inside $K(p^{\infty})$ by the Tate duality. The rest of the assertion follows from Theorem \ref{non_vanishing}.
\end{proof}

In the following, we briefly discuss a work of Wingberg which relates pure locality of primes to a work of Yager \cite{Ya2} on ``regular'' primes for (extensions of) imaginary quadratic fields.

\begin{definition}\label{regular_F}
Let $F$ be a subfield of $K(E[p])$ containing $K$. We say $\p$ (resp. $\bar{\p}$) is \emph{regular for $F$} if the field $F_{S_{\p}}(p)$ (resp. $F_{S_{\bar{\p}}}(p)$) is a $\mathbb{Z}_{p}$-extension of $F$. We say $p$ is regular for $F$ if $\p$ and $\bar{\p}$ are both regular for $F$.
\end{definition}

One can easily observe the following lemma.

\begin{lemma}\label{regular_subfield}
The following assertions hold.
\begin{enumerate}
\item $p$ is regular for $K$.
\item Let $K \subset F \subset L \subset K(E[p])$ be subfields of $K(E[p])$ containing $K$. If $\p$ is regular for $L$, then $\p$ is so for $F$.
\end{enumerate}
\end{lemma}
\begin{proof}
(1) Note that the maximal abelian quotient of $\mathrm{Gal}(K_{S_{\p}}(p)/K)$ is equal to the maximal pro-$p$ quotient of $\mathrm{Gal}(K(\p^{\infty})/K)$, which is isomorphic to $\mathbb{Z}_{p}$. Hence $\mathrm{Gal}(K_{S_{\p}}(p)/K)$ itself is isomorphic to $\mathbb{Z}_{p}$. The same argument shows that $\bar{\p}$ is also regular for $K$.

(2) Note that $F_{S_{\p}}(p)$ contains a $\mathbb{Z}_{p}$-extension $K_{S_{\p}}(p)\cdot F$ of $F$. Since the natural homomorphism $\mathbb{Z}_{p} \cong \mathrm{Gal}(L_{S_{\p}}(p)/L) \to \mathrm{Gal}(F_{S_{\p}}(p)/F)$ is surjective, the assertion follows.
\end{proof}

We introduce a theorem of Wingberg. As we shall see, the following original statement needs a slight modification since there is a counter-example (Example \ref{counter_example}).

\begin{theorem}[Wingberg {\cite[Theorem]{Wi}}]\label{Wingberg}
Let $F$ be a subfield of $K(E[p])$ containing $K$. Then $\p$ is regular for $F$ if and only if a prime of $F$ above $\bar{\p}$ is purely local. A similar statement also holds when replacing $\p$ by $\bar{\p}$.
\end{theorem}

If this theorem is true, then $\p$ would be always purely local for $K$. However, the following example gives a counter-example. We remark that both $G_{K_{\p}}^{(p)}$ and $\mathrm{Gal}(K_{S_{p}}(p)/K)$ are free pro-$p$ groups of rank two, hence are isomorphic as abstract profinite groups.

\begin{example}\label{counter_example}
Let $F=K=\mathbb{Q}(\sqrt{-1})$ and $p=29789$. If we set $\pi=110+133\sqrt{-1}$ and $\p=(\pi)$, then we have $(p)=\p\bar{\p}$. In the following, we show that the natural map $G_{K_{\p}}^{(p)} \to \mathrm{Gal}(K_{S_{p}}(p)/K)$ is not surjective by using SageMath \cite{Sage}. This is the only counter-example we found among primes less than $50000$.

To confirm the claim, it suffices to show that the map
\[
G_{K_{\p}}^{(p)} \to \mathrm{Gal}(K_{S_{\bar{\p}}}(p)/K)
\] is not surjective. In other words, the Frobenius element $\sigma_{\p}$ does not generate $\mathrm{Gal}(K_{S_{\bar{\p}}}(p)/K)$. Note that $\mathrm{Gal}(K_{S_{\bar{\p}}}(p)/K)$ is nothing but the group of principal units in $\mathrm{Gal}(K(\bar{\p}^{\infty})/K) \cong O_{K_{\bar{\p}}}^{\times}/O_{K}^{\times}$ and $\sigma_{\p}$ maps to $\pi \bmod O_{K}^{\times}$ under this identification. 

Now the projection of $\pi$ onto the group of principal unit is a generator if and only if its $(p-1)$-th power is so. We calculated the value $\pi^{p-1}-1 \bmod \bar{\p}^{2}$ by using SageMath and found that $\pi^{p-1}$ is contained in the subgroup $1+\bar{\p}^{2}O_{K_{\bar{\p}}} \subsetneq 1+\bar{\p}O_{K_{\bar{\p}}}$, which confirms the claim. 
\end{example}

In fact, the proof of \cite[Theorem]{Wi} essentially shows the following slightly weaker statement than the original one:

\begin{theorem}[A modified version of Theorem {\ref{Wingberg}}]\label{thm:modify}
Let $F$ be a subfield of $K(E[p])$ containing $K$. Then $\p$ is regular for $F$ and there exists a unique prime of $F_{S_{\p}}(p)$ above $\bar{\p}$ if and only if a prime of $F$ above $\bar{\p}$ is purely local. A similar statement also holds when replacing $\p$ by $\bar{\p}$.
\end{theorem}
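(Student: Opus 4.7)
My plan is to prove the two implications separately, both reducing via the preceding proposition to the equivalence: a unique prime $v$ of $F$ above $\bar{\p}$ is purely local if and only if there is a unique prime of $F_{S_{p}}(p)$ above $\bar{\p}$. The theorem then amounts to comparing the number of primes above $\bar{\p}$ in $F_{S_{p}}(p)$ versus in its subfield $F_{S_{\p}}(p)$.

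For $(\Leftarrow)$: assuming the unique $v \mid \bar{\p}$ is purely local, the uniqueness of the prime in $F_{S_{p}}(p)$ passes down to $F_{S_{\p}}(p)$ for free. To obtain $\p$-regularity, I would identify $G \coloneqq \mathrm{Gal}(F_{S_{p}}(p)/F)$ with $G_{F_{v}}^{(p)}$ using the pure locality; the normal subgroup $I_{\bar{\p}}$ generated by the inertia at the unique prime above $\bar{\p}$ then matches the unique inertia subgroup $I_{F_{v}}^{(p)} \subset G_{F_{v}}^{(p)}$, and the quotient $G/I_{\bar{\p}} \cong \mathrm{Gal}(F_{S_{\p}}(p)/F)$ identifies with the maximal unramified pro-$p$ quotient $G_{F_{v}}^{(p)}/I_{F_{v}}^{(p)} \cong \mathbb{Z}_{p}$, yielding regularity.

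For $(\Rightarrow)$: set $L \coloneqq F_{S_{\p}}(p)$, let $w$ be the unique prime of $L$ above $\bar{\p}$, and pick a prime $w'$ of $F_{S_{p}}(p)$ above $w$. Uniqueness of $w$ in $L$ forces uniqueness of the prime $v$ of $F$ above $\bar{\p}$, and since $L/F$ is unramified at $\bar{\p}$, the completion $L_{w}$ is the unramified $\mathbb{Z}_{p}$-extension of $F_{v}$. The crucial observation is that $L_{S_{\p}(L)}(p) = L$: any pro-$p$ extension of $L$ unramified outside $S_{\p}(L)$ is a pro-$p$ extension of $F$ unramified outside $S_{\p}(F)$, hence contained in $F_{S_{\p}}(p) = L$ by maximality. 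Consequently the maximal subextension of $F_{S_{p}}(p) = L_{S_{p}(L)}(p)$ over $L$ unramified at $w$ is $L$ itself, so $\mathrm{Gal}(F_{S_{p}}(p)/L)$ is the normal closure of the inertia at $w'$. The remaining task is to promote this to pure locality of $w$ over $L$, which then transfers through the tower $F \subset L \subset F_{S_{p}}(p)$ to pure locality of $v$ over $F$.

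The main obstacle is this last step of $(\Rightarrow)$: passing from normal generation by the $w'$-inertia to the equality $D_{w'} = \mathrm{Gal}(F_{S_{p}}(p)/L)$. This is precisely where the uniqueness hypothesis for $\bar{\p}$ in $F_{S_{\p}}(p)$ must be used (its absence in Wingberg's original statement being exposed by Example \ref{ex:counter_example}). Following the cohomological method of the preceding proposition's $(2) \Rightarrow (1)$ implication, I would compare $\dim_{\mathbb{F}_{p}} H^{1}(G_{F_{v}}^{(p)}, \mathbb{F}_{p})$ with $\dim_{\mathbb{F}_{p}} H^{1}(G, \mathbb{F}_{p})$ via \cite[Theorem 10.7.13]{NSW} and local class field theory, and invoke Leopoldt's conjecture for $F$ (valid since $F/K$ is abelian) to control $H^{2}$; the $\p$-regularity hypothesis is what aligns the global and local dimension counts and forces the surjection $G_{F_{v}}^{(p)} \twoheadrightarrow D_{w'}$ to exhaust $G$.
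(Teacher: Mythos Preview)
The paper does not supply its own proof of this theorem: it simply asserts that Wingberg's argument in \cite{Wi} already establishes the modified statement. So there is no in-paper proof to compare against; what you have written is an independent outline.

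Your $(\Leftarrow)$ direction is sound. Once $v\mid\bar{\p}$ is purely local, the preceding proposition gives a unique prime of $F_{S_{p}}(p)$ above $\bar{\p}$, hence of the subfield $F_{S_{\p}}(p)$; and the identification $G\cong G_{F_{v}}^{(p)}$ really does carry the inertia at the unique prime above $\bar{\p}$ to $I_{F_{v}}^{(p)}$ (because the isomorphism is induced by the canonical decomposition map), so $\mathrm{Gal}(F_{S_{\p}}(p)/F)\cong G_{F_{v}}^{(p)}/I_{F_{v}}^{(p)}\cong\mathbb{Z}_{p}$.

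Your $(\Rightarrow)$ direction correctly isolates the crux --- upgrading ``$N:=\mathrm{Gal}(F_{S_{p}}(p)/L)$ is the normal closure of $I_{w'}$'' to ``$D_{w'}=N$'' --- but the proposed closing step has a genuine gap. A comparison of $\dim_{\mathbb{F}_{p}}H^{1}$ via \cite[Theorem 10.7.13]{NSW} and local class field theory can at best show that the generator counts of $G_{F_{v}}^{(p)}$ and $G$ are compatible; it does \emph{not} by itself force the map $G_{F_{v}}^{(p)}\to G$ to be surjective. Indeed, for pro-$p$ groups surjectivity is equivalent to injectivity of the restriction $H^{1}(G,\mathbb{F}_{p})\to H^{1}(G_{F_{v}}^{(p)},\mathbb{F}_{p})$, and matching dimensions says nothing about injectivity. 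Put differently, you have shown $D\cdot N=G$ with $D\supset I_{w'}$ and $N$ the $G$-normal closure of $I_{w'}$, but these constraints alone do not imply $D\supset N$ (already in abelian toy models such as $G=\mathbb{Z}_{p}^{2}$ one can have $D\cdot N=G$ with $D\cap N$ proper). Closing this gap requires the specific class-field-theoretic input in Wingberg's proof rather than a bare dimension count; since the paper defers to \cite{Wi}, you should too, or else extract from that paper the precise mechanism that forces $D_{w'}=N$.
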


\begin{remark}
Assume that $F$ is a subfield of $K(E[p])$ containing $K$. If there exists a unique prime of $F_{S_{\p}}(p)$ above $\bar{\p}$, then $K_{S_{\p}}(p)$ also has a unique prime above $\bar{\p}$. The latter is equivalent to saying that $\mathrm{Frob}_{\bar{\p}}$ generates the group of principal units in $\mathrm{Gal}(K(\p^{\infty})/K) \cong O_{K_{\p}}^{\times}/O_{K}^{\times}$, which does not hold in general as Example \ref{counter_example} shows.
\end{remark}

\begin{example}
In \cite[page 33]{Ya2}, Yager provided a Kummer-type criterion which relates the regularity of $\p$ to $\p$-adic properties of special values of Hecke $L$-functions and gave examples of regular primes when $K=\mathbb{Q}(\sqrt{-1})$. For example, $p=5$ is regular for the mod-$5$ ray class field $K(5)$. 

As an another example, $p=29$ is regular for the mod-$\p$ ray class field $K(\p)$ although it is not for $K(29)$. In this case, the prime over $\p$ is purely local for $K(\p)$. Hence we can apply Proposition \ref{prp:purely_local} to conclude the nontriviality of the corresponding elliptic Soul\'e characters.
\end{example}

\section{Surjectivity of elliptic Soul\'e characters}\label{5}

In this last section, we consider the elliptic Soul\'e characters modulo $p$ and derive nontriviality of them under certain situations.

First, by the definition of the elliptic Soul\'e characters (cf. Definition \ref{ellSoule}), $\kappa_{\boldsymbol{m}}$ is congruent to $\kappa_{\boldsymbol{n}}$ modulo $p$ for every $\boldsymbol{m}, \boldsymbol{n} \geq (2,2)$ if $\boldsymbol{m} \equiv \boldsymbol{n} \bmod p-1$. Similarly, $\kappa_{(m,1)}$ is congruent to $\kappa_{(n,1)}$ modulo $p$ and $\kappa_{(1,m)}$ is congruent to $\kappa_{(1,n)}$ modulo $p$ for every $m, n \geq 2$ if $m$ is congruent to $n$ modulo $p-1$. 

\medskip

Let us fix $\boldsymbol{m}=(m_{1}, m_{2}) \in I$. The reduction modulo $p$ of $\kappa_{\boldsymbol{m}}$ is nontrivial if and only if the element 
\[
  \epsilon_{\boldsymbol{m}, 1}=\prod_{\substack{0 \leq a,b <p \\ p \nmid \gcd(a,b)}}
\left(
\prod_{k=0}^{n} \theta_{p}(a\omega_{1, 2}+b\omega_{2, 2})\right)^{a^{m_{1}-1}b^{m_{2}-1}}
\] is nontrivial as an element of $K(p^{\infty})^{\times}/K(p^{\infty})^{\times p}$. By Lemma \ref{mathfrak_a}, we have the following equality
\[
\epsilon_{\boldsymbol{m}, 1}^{N\mathfrak{a}-\chi^{\boldsymbol{1}-\boldsymbol{m}}(\sigma_{\mathfrak{a}})}=\prod_{\substack{0 \leq a,b <p \\ p \nmid \gcd(a,b)}}\theta_{\mathfrak{a}}(a\omega_{1,1}+b\omega_{2,1})^{a^{m_{1}-1}b^{m_{2}-1}}
\] in  $K(p^{\infty})^{\times}/K(p^{\infty})^{\times p}$ for every ideal $\mathfrak{a}$ prime to $p$. 

By Lemma \ref{non_deg_case} and Lemma \ref{deg_case}, the element $\epsilon_{\boldsymbol{m}, 1}$ times $N\mathfrak{a}-\chi^{\boldsymbol{1}-\boldsymbol{m}}(\sigma_{\mathfrak{a}})$ is nontrivial if and only if $\epsilon_{\boldsymbol{m}, 1, \mathfrak{a}}$ is so as an element of $K(p^{\infty})^{\times}/K(p^{\infty})^{\times p}$ : recall that, in these lammas, $\epsilon_{\boldsymbol{m}, 1, \mathfrak{a}}$ was defined as 
\[
  \epsilon_{\boldsymbol{m}, 1, \mathfrak{a}} \coloneqq \prod_{a, b \in (\mathbb{Z}/p \mathbb{Z})^{\times}}\theta_{\mathfrak{a}}(a\omega_{1,1}+b\omega_{2,1})^{a^{m_{1}-1}b^{m_{2}-1}}
\] if $\boldsymbol{m} \geq (2,2)$, and
\[
  \epsilon_{\boldsymbol{m},1, \mathfrak{a}} \coloneqq \prod_{a \in (\mathbb{Z}/p\mathbb{Z})^{\times}} \theta_{\mathfrak{a}}(a\bar{\pi}\omega_{1,1})^{a^{m_{1}-1}}
\] if $m_{2}=1$ and the remaining case is similar. In fact, the term
\[
  \frac{1-i^{\boldsymbol{m}-\boldsymbol{1}}(\bar{\pi})}{1-i^{\boldsymbol{m}-\boldsymbol{1}}(\pi)}+i^{\boldsymbol{m}-\boldsymbol{1}}(\bar{\pi})
\] appearing in Lemma \ref{non_deg_case} is contained in $1+p\mathbb{Z}_{p}$ if $\boldsymbol{m} \geq (2,2)$. Moreover, the term
\[
  \frac{1-i^{\boldsymbol{m}-\boldsymbol{1}}(\bar{\pi})}{1-i^{\boldsymbol{m}-\boldsymbol{1}}(\pi)}+i^{\boldsymbol{m}-\boldsymbol{1}}(\bar{\pi})=\frac{1-i^{\boldsymbol{m}-\boldsymbol{1}}(p)}{1-i^{\boldsymbol{m}-\boldsymbol{1}}(\pi)}
\] appearing in Lemma \ref{deg_case} is also contained in $1+p\mathbb{Z}_{p}$ if $m_{2}=1$ (the case where $m_{1}=1$ is similar). 

\medskip

In the following, we set $\omega \coloneqq \omega_{1,1}+\omega_{2,1}$ and
\[
 \phi_{\boldsymbol{m}} \coloneqq \sum_{\sigma \in \mathrm{Gal}(K(p)/K)} \chi^{\boldsymbol{m}-\boldsymbol{1}}(\sigma) \sigma \in \mathbb{F}_{p}[\mathrm{Gal}(K(p)/K)],
\] which projects $K(p)^{\times}/K(p)^{\times p}$ onto its $\chi^{\boldsymbol{1}-\boldsymbol{m}}$-th isotypic component. It follows from Proposition \ref{elliptic_unit} (2) that 
\[
  \epsilon_{\boldsymbol{m}, 1, \mathfrak{a}}=w_{K}\phi_{\boldsymbol{m}} \left( \theta_{\mathfrak{a}}(\omega) \right)
\] if $\boldsymbol{m} \geq (2,2)$ and that
\[
  \epsilon_{\boldsymbol{m}, 1, \mathfrak{a}}=w_{K}\phi_{\boldsymbol{m}} \left( \theta_{\mathfrak{a}}(\bar{\pi}\omega) \right)
\] if $m_{2}=1$.

\begin{lemma}\label{infty_to_one}
  Assume that $\boldsymbol{m} \in I$ is not congruent to $(0,0)$ modulo $p-1$ and  $\mathfrak{a}$ satisfies $N\mathfrak{a}-\chi^{\boldsymbol{1}-\boldsymbol{m}}(\sigma_{\mathfrak{a}}) \not \equiv 0 \bmod p$. Then $\kappa_{\boldsymbol{m}}$ is surjective if and only if the element \[
      \epsilon_{\boldsymbol{m}, 1, \mathfrak{a}} \in K(p)^{\times}/K(p)^{\times p}
    \] is nontrivial.

\end{lemma}

\begin{proof}
Note that the kernel of the natural homomorphism 
\[
  K(p)^{\times}/K(p)^{\times p} \to K(p^{\infty})^{\times}/K(p^{\infty})^{\times p} 
\] is isomorphic to $H^{0}(\mathrm{Gal}(K(p^{\infty})/K(p)), \mu_{p})=\mu_{p}$. Since $\epsilon_{\boldsymbol{m}, 1, \mathfrak{a}}$ is contained in the $\chi^{\boldsymbol{1}-\boldsymbol{m}}(\neq \chi^{\boldsymbol{1}})$-isotypic component, it follows that
\[
  \epsilon_{\boldsymbol{m}, 1, \mathfrak{a}} \in K(p^{\infty})^{\times}/K(p^{\infty})^{\times p}
\] is nontrivial if and only if it is so as an element of $K(p)^{\times}/K(p)^{\times p}$. This observation, together with the assumption $N\mathfrak{a}-\chi^{\boldsymbol{1}-\boldsymbol{m}}(\sigma_{\mathfrak{a}}) \not \equiv 0 \bmod p$, concludes the proof.
\end{proof}

In the following, we treat $\kappa_{\boldsymbol{m}}$ separately according to the index $\boldsymbol{m} \in I$:

\medskip

Case 1:  $\boldsymbol{m} \geq (2,2)$ and $\boldsymbol{m} \equiv \boldsymbol{1} \bmod p-1$.

\medskip

Case 2:  $\boldsymbol{m} \geq (2,2)$ and $\boldsymbol{m} \not \equiv \boldsymbol{1} \bmod p-1$.

\medskip

Case 3:  either $m_{1}=1$ or $m_{2}=1$ and $\boldsymbol{m} \equiv \boldsymbol{1} \bmod p-1$.

\medskip

Case 4: either $m_{1}=1$ or $m_{2}=1$ and $\boldsymbol{m} \not \equiv \boldsymbol{1} \bmod p-1$.

\medskip

Recall that the group of units $O_{K(p)}^{\times}$ (resp. $O_{K(\p)}^{\times}$, $O_{K(\bar{\p})}^{\times}$) is abbreviated as $E_{1}$ (resp. $E_{(1,0)}$, $E_{(0,1)}$) cf. the beginning of Section \ref{4}.

\subsection{First case}\label{5.1}

We assume that $\boldsymbol{m} \geq (2,2)$ and $\boldsymbol{m} \equiv (1,1) \bmod p-1$. It suffices to consider the case where $\boldsymbol{m}=(p,p)$.

Choose an ideal $\mathfrak{a}$ prime to $p$ such that the first condition in Lemma \ref{infty_to_one} holds. Then the element $\epsilon_{\boldsymbol{m},1, \mathfrak{a}}$ is contained in the group
\[
  \left( E_{1}/E_{1}^{p} \right)^{\mathrm{Gal}(K(p)/K)}=O_{K}^{\times}/O_{K}^{\times p}
\] which is trivial since $p \geq 5$. By Lemma \ref{infty_to_one}, we have:

\begin{proposition}\label{surj_11}
Let $\boldsymbol{m}=(m_{1}, m_{2}) \in I$ such that  $\boldsymbol{m} \geq (2,2)$ and $\boldsymbol{m} \equiv \boldsymbol{1} \mod p-1$. The $\boldsymbol{m}$-th elliptic Soul\'e character $\kappa_{\boldsymbol{m}}$ is not surjective.
\end{proposition}

\subsection{Second case}\label{5.2}

We assume that $\boldsymbol{m}=(m_{1}, m_{2}) \in I$ satisfies $\boldsymbol{m} \geq (2,2)$ and $\boldsymbol{m} \not \equiv \boldsymbol{1} \bmod p-1$.

\begin{proposition}\label{surj_center}
Assume that $m_{1}, m_{2} \not \equiv 1 \bmod p-1$ and that $\boldsymbol{m} \not \equiv (0,0) \bmod p-1$. If the class number of $K(p)$ is not divisible by $p$, then the $\boldsymbol{m}$-th elliptic Soul\'e character $\kappa_{\boldsymbol{m}}$ is surjective (in particular, nontrivial). 
\end{proposition}

\begin{proof}
Since the class number of $K(p)$ is not divisible by $p$, the equality 
\[
C(K(p))/\left( C(K(p)) \cap E_{1}^{p} \right)= E_{1}/E_{1}^{p} \tag{$\ast$}
\] holds by Theorem \ref{Rubin_cl}. By Dirichlet's unit theorem \cite[Proposition 8.7.2]{NSW}, the $\chi^{\boldsymbol{1}-\boldsymbol{m}}$-component of $E_{1}/E_{1}^{p}$ is observed to be one-dimensional. 

Recall that $C(K(p))$ is generated by the image of the three subgroups
\[
  C'(K(p)), \quad D_{1}(K(p)) \quad \text{and} \quad D_{2}(K(p)),
\] see Definition \ref{C_D}. Since $D_{1}(K(p))$ is $\mathrm{Gal}(K(p)/K(\p))$-invariant, it follows that the $\chi^{\boldsymbol{1}-\boldsymbol{m}}$-isotypic component of the image of $D_{1}(K(p))$ is trivial. A similar argument can be applied to $D_{2}(K(p))$.

By definition, the group $C'(K(p))$ is generated by $\mu(K(p))$ and $\theta_{\mathfrak{a}}(\omega)^{\sigma-1}$ where $\mathfrak{a}$ ranges over the ideals relatively prime to $6p$ and $\sigma$ over $\mathrm{Gal}(K(p)/K)$. If we consider the action of 
\[
 \phi_{\boldsymbol{m}}=\sum_{\sigma \in \mathrm{Gal}(K(p)/K)} \chi^{\boldsymbol{m}-\boldsymbol{1}}(\sigma) \sigma \in \mathbb{F}_{p}[\mathrm{Gal}(K(p)/K)]
\] on $E_{1}/E_{1}^{p}$ (cf. the discussion before Lemma \ref{infty_to_one}), it follows from the equality ($\ast$) that the $\chi^{\boldsymbol{1}-\boldsymbol{m}}$-isotypic component of $E_{1}/E_{1}^{p}$ is generated by  $\phi_{\boldsymbol{m}}(\theta_{\mathfrak{a}}(\omega))$, where $\mathfrak{a}$ ranges over the  ideals relatively prime to $6p$. We note that $\mu(K(p))$ does not affect the $\chi^{\boldsymbol{1}-\boldsymbol{m}}$-isotypic component since we assume $\boldsymbol{m} \not \equiv (0,0) \bmod p-1$.

Fix an ideal $\mathfrak{a}$ prime to $6p$ such that $\chi^{\boldsymbol{1}-\boldsymbol{m}}(\sigma_{\mathfrak{a}})-N\mathfrak{a} \not \equiv 0 \bmod p$. Then, for an arbitrary ideal $\mathfrak{c}$ prime to $6p$, we have
\[
\theta_{\mathfrak{a}}(\omega)^{\sigma_{\mathfrak{c}}-N\mathfrak{c}}
=
\theta_{\mathfrak{c}}(\omega)^{\sigma_{\mathfrak{a}}-N\mathfrak{a}}
\] by Lemma \ref{p_to_a}. Hence it follows that
\[
(\chi^{\boldsymbol{1}-\boldsymbol{m}}(\sigma_{\mathfrak{c}})-N\mathfrak{c}) \phi_{\boldsymbol{m}}(\theta_{\mathfrak{a}}(\omega))=(\chi^{\boldsymbol{1}-\boldsymbol{m}}(\sigma_{\mathfrak{a}})-N\mathfrak{a})\phi_{\boldsymbol{m}}(\theta_{\mathfrak{c}}(\omega)).
\] This shows that the $\chi^{\boldsymbol{1}-\boldsymbol{m}}$-component of $E_{1}/E_{1}^{p}$ is generated by a single element $\phi_{\boldsymbol{m}}(\theta_{\mathfrak{a}}(\omega))$, which is nontrivial by the equality ($\ast$). Hence we can apply Lemma \ref{infty_to_one} to conclude the proof.
\end{proof}

If $m_{1}$ is congruent to $1 \bmod p-1$, the above argument does not work since the $\chi^{\boldsymbol{1}-\boldsymbol{m}}$-isotypic component of the image of $D_{2}(K(p))$ in $E_{1}/E_{1}^{p}$ can be nontrivial. In this case, we prove the following

\begin{proposition}\label{surj_center_2}
   Assume that $m_{1} \equiv 1 \bmod p-1$ or $m_{2} \equiv 1 \bmod p-1$. If the class number of $K(p)$ is not divisible by $p$ and there exists a unique prime of $K(p)$ above $\p$, then $\kappa_{\boldsymbol{m}}$ is surjective (in particular, nontrivial).
\end{proposition}

\begin{proof}
We may assume that $m_{1} \equiv 1 \bmod p-1$ and $m_{2} \not \equiv 1 \bmod p-1$.  Since the class number of $K(p)$ is not divisible by $p$, the group $E_{1}/E_{1}^{p}$ is generated by the images of $C'(K(p))$, $D_{1}(K(p))$ and $D_{2}(K(p))$. As in the proof of Proposition \ref{surj_center}, we consider the action of $\phi_{\boldsymbol{m}}$ on $E_{1}/E_{1}^{p}$. 

First, by our assumption, the image of $D_{1}(K(p))$ under $\phi_{\boldsymbol{m}}$ is trivial. If we choose an arbitrary ideal $\mathfrak{a}$ prime to $6p$ such that $\chi^{\boldsymbol{1}-\boldsymbol{m}}(\sigma_{\mathfrak{a}})-N\mathfrak{a} \not \equiv 0 \bmod p$, then it follows by a similar argument to that of Proposition \ref{surj_center} that the $\chi^{\boldsymbol{1}-\boldsymbol{m}}$-isotypic component of $E_{1}/E_{1}^{p}$ is generated by $\phi_{\boldsymbol{m}}(\theta_{\mathfrak{a}}(\omega))$ and $\phi_{\boldsymbol{m}}(\theta_{\mathfrak{a}}(\pi\omega))$. 

By Proposition \ref{elliptic_norm}, we have
\[
\phi_{\boldsymbol{m}} \left( N_{K(p)/K(\bar{\p})}(\theta_{\mathfrak{a}}(\omega)) \right)
= \phi_{\boldsymbol{m}}(\theta_{\mathfrak{a}}(\pi\omega)^{1-\sigma_{\p}^{-1}})=\left(1-\chi^{\boldsymbol{m}-\boldsymbol{1}}(\sigma_{\p}) \right)\phi_{\boldsymbol{m}}(\theta_{\mathfrak{a}}(\pi\omega)).
\] On the other hand, we also have
\[
\phi_{\boldsymbol{m}} \left( N_{K(p)/K(\bar{\p})}(\theta_{\mathfrak{a}}(\omega)) \right)
=[K(p):K(\p)]\phi_{\boldsymbol{m}}(\theta_{\mathfrak{a}}(\omega)).
\] since $\phi_{\boldsymbol{m}}\sigma=\phi_{\boldsymbol{m}}$ for every $\sigma \in \mathrm{Gal}(K(p)/K(\bar{\p}))$. Hence it follows that 
\[
  [K(p):K(\p)]\phi_{\boldsymbol{m}}(\theta_{\mathfrak{a}}(\omega))=\left(1-\chi^{\boldsymbol{m}-\boldsymbol{1}}(\sigma_{\p}) \right)\phi_{\boldsymbol{m}}(\theta_{\mathfrak{a}}(\pi\omega)).
\] Since the degree $[K(p):K(\p)]$ is prime to $p$, this equality implies that the $\chi_{2}^{1-m_{2}}$-isotypic component of $E_{1}/E_{1}^{p}$ is generated by $\phi_{\boldsymbol{m}}(\theta_{\mathfrak{a}}(\pi \omega))$ and that $\phi_{\boldsymbol{m}}(\theta_{\mathfrak{a}}(\omega))$ is nontrivial if and only if the term 
\[
  1-\chi_{2}^{m_{2}-1}(\sigma_{\p}) \tag{$\ast$}
\] is nonzero modulo $p$. It suffices to show that ($\ast$) is nontrivial under the assumption on the number of primes of $K(p)$ above $\p$, since then we can apply Lemma \ref{infty_to_one}. Note that the character $\chi_{2}^{w_{k}}$ induces an isomorphism
\[
  \mathrm{Gal}(K(\bar{\p})/K) \cong (O_{K}/\bar{\p})^{\times w_{K}}.
\] The assumption is equivalent to the condition that the Frobenius element $\sigma_{\p}$ generates $\mathrm{Gal}(K(\bar{\p})/K)$. Hence ($\ast$) is nonzero modulo $p$ as desired.
\end{proof}

Finally, we consider the case where $\boldsymbol{m} \geq (2,2)$ satisfies $\boldsymbol{m} \equiv (0,0) \bmod p-1$. Note that we cannot apply Lemma \ref{infty_to_one} since the first condition in the lemma is never satisfied in this case. We need the following lemma:

\begin{lemma}\label{coh}
  The following assertions hold.
  \begin{enumerate}
    \item The \'etale cohomology group 
    \[
      H^{1}_{\et}(O_{K(p^{\infty}), S_{p}}, \mathbb{Z}_{p}(p-1))^{\mathrm{Gal}(K(p^{\infty})/K)}
    \]  is isomorphic to $\mathbb{Z}_{p}$.
    \item The \'etale cohomology group 
    \[
      H^{1}_{\et}(O_{K, S_{p}},\mathbb{Z}_{p}(p-1))
    \] is isomorphic to $\mathbb{Z}_{p} \oplus \mathbb{F}_{p}$.
    \item Both the kernel and the cokernel of the restriction map
    \[
      H^{1}_{\et}(O_{K, S_{p}},\mathbb{Z}_{p}(p-1)) \to H^{1}_{\et}(O_{K(p^{\infty}), S_{p}}, \mathbb{Z}_{p}(p-1))^{\mathrm{Gal}(K(p^{\infty})/K)} 
    \] are isomorphic to $\mathbb{F}_{p}$.
  \end{enumerate}
\end{lemma}

\begin{proof}
  (1) We write the unique $\mathbb{Z}_{p}^{2}$-extension of $K$ by $K_{\infty}$. It holds by the Hochschild-Serre spectral sequence that
  \[
    H^{1}_{\et}(O_{K_{\infty}, S_{p}}, \mathbb{Z}_{p}(p-1))^{\mathrm{Gal}(K_{\infty}/K)} \xrightarrow{\sim} H^{1}_{\et}(O_{K(p^{\infty}), S_{p}}, \mathbb{Z}_{p}(p-1))^{\mathrm{Gal}(K(p^{\infty})/K)}
  \] (see Lemma \ref{k_vs_F} for a similar argument). This group can be identified with 
  \[
    \mathrm{Hom}_{\mathrm{Gal}(K_{\infty}/K)}(G_{K_{\infty}, S_{p}}^{(p)},  \mathbb{Z}_{p}(p-1))
  \] where $G_{K_{\infty}, S_{p}}^{(p)}$ is the Galois group of the maximal pro-$p$ extension of $K_{\infty}$ unramified outside $p$. Since $G_{K, S_{p}}^{(p)}$ is a free pro-$p$ group of rank two by Lemma \ref{free}, the group $G_{K_{\infty}, S_{p}}^{(p)}$ coincides with the commutator subgroup of $G_{K, S_{p}}^{(p)}$. Then it follows that the maximal abelian quotient of $G_{K_{\infty}, S_{p}}^{(p)}$ is a free $\mathbb{Z}_{p}[[\mathrm{Gal}(K_{\infty}/K)]]$-module of rank one by \cite[Theorem 2]{Ih1}. The assertion of (1) follows from this.

  (2-3) Since $G_{K_{\infty}, S_{p}}^{(p)}$ is a free pro-$p$ group, it follows that
  \[
    H^{2}_{\et}(O_{K, S_{p}},\mathbb{Z}_{p}(p-1)) \cong H^{2}(G_{K, S_{p}}^{(p)}, \mathbb{Z}_{p}(p-1))
  \] is trivial (cf. the beginning of Section \ref{4.2}). Hence by the five-term exact sequence, the kernel and the cokernel of the restriction map
  \[
    H^{1}_{\et}(O_{K, S_{p}},\mathbb{Z}_{p}(p-1)) \to H^{1}_{\et}(O_{K(p^{\infty}), S_{p}}, \mathbb{Z}_{p}(p-1))^{\mathrm{Gal}(K(p^{\infty})/K)} 
  \] are isomorphic to $H^{1}(\mathrm{Gal}(K(p^{\infty})/K), \mathbb{Z}_{p}(p-1))$ and $H^{2}(\mathrm{Gal}(K(p^{\infty})/K), \mathbb{Z}_{p}(p-1))$, respectively. Since both of these are isomorphic to $\mathbb{F}_{p}$, the assertions of (2) and (3) follow.
\end{proof}

\begin{proposition}\label{surj_center_3}
  The following assertions hold.
  \begin{enumerate}
    \item Assume that $\boldsymbol{m} \geq (2,2)$ satisfies $\boldsymbol{m} \equiv (0,0) \bmod p-1$ and the class number of $K(p)$ is not divisible by $p$. Then $\kappa_{\boldsymbol{m}}$ is surjective (in particular, nontrivial).
    \item Assume that $\kappa_{(p-1,p-1)}$ is surjective. Then the $\chi^{\boldsymbol{1}}$-component of the group of elliptic units mod $p$ 
    \[
      C(K(p))/\left( C(K(p)) \cap E_{1}^{p} \right)
   \] is two-dimensional.
  \end{enumerate}
\end{proposition}

\begin{proof}
  (1) We may assume $\boldsymbol{m}=(p-1,p-1)$. Recall that $\kappa_{\boldsymbol{m}}$ is an element of
  \[
    H^{1}_{\et}(O_{K(p^{\infty}), S_{p}}, \mathbb{Z}_{p}(p-1))^{\mathrm{Gal}(K(p^{\infty})/K)},
  \] which is isomorphic to $\mathbb{Z}_{p}$ by Lemma \ref{coh} (1). It suffices to show that $\kappa_{\boldsymbol{m}}$ generates this group. We fix an ideal $\mathfrak{a}$ prime to $6p$ such that \[
    \phi_{\boldsymbol{m}}\left( \theta_{\mathfrak{a}}(\omega) \right) \in E_{1}/E_{1}^{p}
  \] is nontrivial. Here, we use the assumption that the class number of $K(p)$ is not divisible by $p$. We claim that the image of the norm compatible system $\left( \theta_{\mathfrak{a}}(\omega_{1,n}+\omega_{2,n}) \right)_{n \geq 1}$ under the map $e_{\boldsymbol{m}}$, which we denote by $\kappa_{\boldsymbol{m}, \mathfrak{a}}$\footnote{Note that we used the same symbol in Section \ref{4.1} to denote the restriction of this cocycle.}, generates the free part of $H^{1}(O_{K, S_{p}},\mathbb{Z}_{p}(p-1)) \cong \mathbb{Z}_{p} \oplus \mathbb{F}_{p}$.

   The assertion of (1) follows from this claim. In fact, by this claim and Proposition \ref{kappa_a}, the element
  \[
    \left( N\mathfrak{a}-\chi^{\boldsymbol{1}-\boldsymbol{m}}(\sigma_{\mathfrak{a}}) \right) \kappa_{\boldsymbol{m}}
  \] is contained in the image of a generator of the free part of $H^{1}_{\et}(O_{K, S_{p}},\mathbb{Z}_{p}(p-1))$. Hence it follows that $N\mathfrak{a}-\chi^{\boldsymbol{1}-\boldsymbol{m}}(\sigma_{\mathfrak{a}}) \not \equiv 0 \bmod p^{2}$ and that $\kappa_{\boldsymbol{m}}$ is a generator since the cokernel of ($\ast$) is cyclic of order $p$ by Lemma \ref{coh} (3).
  
  First, $\kappa_{\boldsymbol{m}, \mathfrak{a}}$ is nonzero on the free part since its restriction is a nonzero multiple of $\kappa_{\boldsymbol{m}}$, which is nontrivial by Corollary \ref{Soule_cyc}. To prove that $\kappa_{\boldsymbol{m}, \mathfrak{a}}$ generates the free part, it suffices to show that its reduction to
  \[
    H^{1}_{\et}(O_{K, S_{p}}, \mathbb{F}_{p}) \xrightarrow{\sim}  H^{1}_{\et}(O_{K(p), S_{p}}, \mathbb{F}_{p})^{\mathrm{Gal}(K(p)/K)} \tag{$\ast$}
  \] is nontrivial. However, by the construction of $e_{\boldsymbol{m}}$, the image is the character associated to $p$-th roots of
  \[
    \phi_{\boldsymbol{m}}\left( \theta_{\mathfrak{a}}(\omega) \right) \in E_{1}/E_{1}^{p}
  \] which is nontrivial. This concludes the proof of (1).

  (2) By assumption, $\kappa_{(p-1,p-1)}$ generates $H^{1}_{\et}(O_{K(p^{\infty}), S_{p}}, \mathbb{Z}_{p}(p-1))^{\mathrm{Gal}(K(p^{\infty})/K)}$. Moreover, if we choose an ideal $\mathfrak{a}$ prime to $6p$ such that $N\mathfrak{a}-\chi^{\boldsymbol{1}-\boldsymbol{m}}(\sigma_{\mathfrak{a}}) \not \equiv 0 \bmod p^{2}$, then it follows from Proposition \ref{kappa_a} and Lemma \ref{coh} that $\kappa_{\boldsymbol{m}, \boldsymbol{a}}$ generates the free part of $H^{1}_{\et}(O_{K, S_{p}},\mathbb{Z}_{p}(p-1))$. 
  
  The long exact sequence associated to 
  \[
    0 \to \mathbb{Z}_{p}(p-1) \xrightarrow{p}  \mathbb{Z}_{p}(p-1) \to \mathbb{F}_{p} \to 0
  \] shows that the reduction map
  \[
    H^{1}_{\et}(O_{K, S_{p}}, \mathbb{Z}_{p}(p-1)) \to H^{1}_{\et}(O_{K, S_{p}}, \mathbb{F}_{p}) \cong \mathbb{F}_{p}^{2}
  \] is surjective. The latter cohomology group can be identified with the $\chi^{\boldsymbol{1}}$-isotypic component of $O_{K(p),S_{p}}^{\times}/O_{K(p),S_{p}}^{\times p}$ via ($\ast$) and the Kummer isomorphism.

  To conclude the proof, it suffices to show that the $p$-torsion subgroup of $H^{1}_{\et}(O_{K, S_{p}}, \mathbb{Z}_{p}(p-1))$ maps to the image of $\mu_{p}$. In fact, then the image of the reduction map is two-dimensional and is generated by $\phi_{\boldsymbol{m}}\left( \theta_{\mathfrak{a}}(\omega) \right)$ and $\mu_{p}$, both of which are contained in $C(K(p))$.
  
  Note that the $p$-torsion subgroup of $H^{1}_{\et}(O_{K, S_{p}}, \mathbb{Z}_{p}(p-1))$ coincides with $H^{0}_{\et}(O_{K, S_{p}}, \mathbb{F}_{p})=\mathbb{F}_{p}$ via the connecting homomorphism. The image of $1 \in \mathbb{F}_{p}$ in $H^{1}_{\et}(O_{K(p), S_{p}}, \mathbb{F}_{p})^{\mathrm{Gal}(K(p)/K)}$ is a homomorphism
  \[
    G_{K(p), S_{p}}^{(p)} \to \mathbb{F}_{p} \colon \sigma \mapsto \frac{\chi^{(p-1,p-1)}(\sigma)-1}{p} \bmod p
  \] which is trivial on $K(p)(\mu_{p^{2}})$ since $\chi^{(p-1,p-1)}$ is just the $(p-1)$-th power of the $p$-adic cyclotomic character. Hence the $p$-torsion subgroup of $H^{1}_{\et}(O_{K, S_{p}}, \mathbb{Z}_{p}(p-1))$ corresponds to $\mu_{p}$ via Kummer theory as desired.
\end{proof}

Now we prove Theorem \ref{thm:main} (3), which is an analogue of Theorem \ref{thm:fund} (3).

\begin{theorem}\label{thm:surj_22}
  The following two assertions are equivalent.
  \begin{enumerate}
    \item The elliptic Soul\'e character $\kappa_{\boldsymbol{m}}$ is surjective for every $\boldsymbol{m} \in I$ such that $\boldsymbol{m} \geq (2,2)$ unless $\boldsymbol{m} \equiv \boldsymbol{1} \bmod p-1$.
    \item The class number of $K(p)$ is not divisible by $p$ and there exists a unique prime of $K(p)$ above $\p$. 
  \end{enumerate}
\end{theorem}

\begin{proof}
  $(2) \Rightarrow (1)$ : Propositions \ref{surj_center}-\ref{surj_center_3}. $(1) \Rightarrow (2)$ : Let $\boldsymbol{m}=(m_{1}, m_{2}) \in I$ that satisfies $\boldsymbol{m} \geq (2,2)$ and $\boldsymbol{m} \not \equiv (0,0) \bmod p-1$. For an ideal $\mathfrak{a}$ prime to $6p$ such that $N\mathfrak{a}-\chi^{\boldsymbol{1}-\boldsymbol{m}}(\sigma_{\mathfrak{a}}) \not \equiv 0 \bmod p$, the element \[
    \epsilon_{\boldsymbol{m},1 \mathfrak{a}}
 \in K(p)^{\times}/K(p)^{\times p}
\] is nontrivial by Lemma \ref{infty_to_one}. As the proofs of Proposition \ref{surj_center} and Proposition \ref{surj_center_2} show, this nontriviality is equivalent to that of the $\chi^{\boldsymbol{1}-\boldsymbol{m}}$-isotypic component of $C(K(p))/\left( C(K(p)) \cap E_{1}^{p} \right)$. Moreover, the proof of Proposition \ref{surj_center_2} implies that $1-\chi_{2}^{m_{2}-1}(\sigma_{p})$ is nonzero modulo $p$ for every $m_{2} \geq 2$ such that $m_{2} \equiv 1 \bmod w_{K}$.Hence it follows that there exists a unique prime of $K(p)$ above $\p$.

Finally, since we assume that $\kappa_{(p-1,p-1)}$ is surjective, the $\chi^{\boldsymbol{1}}$-isotypic component of $C(K(p))/\left( C(K(p)) \cap E_{1}^{p} \right)$ is two-dimensional by Proposition \ref{surj_center_3} (2). Hence we have
\[
  \dim_{\mathbb{F}_{p}}C(p)/\left( C(K(p)) \cap E_{1}^{p} \right) \geq [K(p) : K]=\dim_{\mathbb{F}_{p}} E_{1}/E_{1}^{p}
\] by Dirichlet's unit theorem. The assertion now follows from Theorem \ref{Rubin_cl}.
\end{proof}

\subsection{Third case}\label{5.3}

We assume that either $m_{1}=1$ or $m_{2}=1$ and $\boldsymbol{m} \equiv (1,1) \bmod p-1$. It is enough to assume that $\boldsymbol{m}=(p,1)$ since a similar argument can be applied to the case of $(1,p)$.

We fix an ideal $\mathfrak{a}$ prime to $p$ such that $N\mathfrak{a}-1 \not \equiv 0 \bmod p$. The element
\[
\epsilon_{\boldsymbol{m}, 1, \mathfrak{a}} \in \left( K(p)^{\times}/K(p)^{\times p} \right)^{\mathrm{Gal}(K(p)/K)}=K^{\times}/K^{\times p}.
\] is nothing but the norm of $\theta_{\mathfrak{a}}(\bar{\pi}\omega)$ (times $w_{K}$), which is congruent to $\pi^{12w(N\mathfrak{a}-1)}$ modulo the image of $O_{K}^{\times}$ by \cite[Chapter $\rm{I\hspace{-.01em}I}$ \textsection 2, 2.2 Proposition and 2.5 Proposition $(\mathrm{ii})$]{dS}. Hence Lemma \ref{infty_to_one} gives:

\begin{proposition}\label{surj_(1,p)}
Let $m>1$ be an integer such that $m \equiv 1 \bmod p-1$. Then, the $(m,1)$-th (resp. the $(1,m)$-th) elliptic Soul\'e character $\kappa_{(m,1)}$ (resp. $\kappa_{(1,m)}$) is surjective (in particular, nontrivial).
\end{proposition}

\subsection{Fourth case}\label{5.4}

We assume that $\boldsymbol{m}=(m_{1}, m_{2}) \in I$ satisfies $m_{2}=1$ and $m_{1} \not \equiv 1 \bmod p-1$. We fix an ideal $\mathfrak{a}$ prime to $6p$ such that we have $N\mathfrak{a}-\chi_{1}^{1-m_{1}}(\sigma_{\mathfrak{a}}) \not \equiv 0 \bmod p$. 

\begin{proposition}\label{surj_edge}
Let $\boldsymbol{m}=(m_{1}, m_{2}) \in I$.
\begin{enumerate}
\item Suppose that $m_{2}=1$. If the class number of $K(\p)$ is not divisible by $p$, then $\kappa_{\boldsymbol{m}}$ is surjective (in particular, nontrivial).
\item Suppose that $m_{1}=1$. If the class number of $K(\bar{\p})$ is not divisible by $p$, then $\kappa_{\boldsymbol{m}}$ is surjective (in particular, nontrivial).
\end{enumerate}
\end{proposition}

\begin{proof} 
We only prove the first assertion since the proof of (2) is similar. By Lemma \ref{infty_to_one}, the assertion is equivalent to saying that $\epsilon_{\boldsymbol{m}, 1, \mathfrak{a}} \in K(\p)^{\times}/K(\p)^{\times p}$ is nontrivial. By Theorem \ref{Rubin_cl}, it follows that
\[
  C(K(\p))/\left( C(K(\p)) \cap E_{(1,0)}^{p} \right)= E_{(1,0)}/E_{(1,0)}^{p}
\] and $\chi_{1}^{1-m_{1}}$-isotypic component of this space is one-dimensional by Dirichlet's unit theorem.  

Note that the group $C(K(\p))$ is generated by $\theta_{\mathfrak{b}}(\bar{\pi}\omega)^{\sigma-1}$, where $\mathfrak{b}$ runs through the ideals of $O_{K}$ with $(\mathfrak{b}, 6\mathfrak{p})=1$ and $\sigma$ through $\mathrm{Gal}(K(\p)/K)$ (see  Remark \ref{useful_remark}). A similar argument to the proof of Proposition \ref{surj_center} shows that the $\chi_{1}^{1-m_{1}}$-isotypic component of $E_{(1,0)}/E_{(1,0)}^{p}$ is generated by $\epsilon_{\boldsymbol{m}, 1, \mathfrak{a}}$ when regarding it as a subspace of $K(\p)^{\times}/K(\p)^{\times p}$. This concludes the proof.
\end{proof}

Finally we prove Theorem \ref{thm:main} (5) in the introduction.

\begin{theorem}\label{thm:surj_edge}
The following two assertions are equivalent.
\begin{enumerate}
\item The elliptic Soul\'e characters $\kappa_{(m,1)}$ are surjective for all $(m,1) \in I$.
\item The class number of the mod-$\p$ ray class field $K(\p)$ is not divisible by $p$.
\end{enumerate}
A similar equivalence also holds for $K(\bar{\p})$.
\end{theorem}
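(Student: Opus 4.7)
The plan is to prove the two implications separately. The direction $(2) \Rightarrow (1)$ should be essentially immediate: for $(m,1)\in I$ with $m\not\equiv 1\bmod p-1$, Proposition~\ref{prp:surj_edge}(1) yields the surjectivity of $\kappa_{(m,1)}$ under the hypothesis $p\nmid h(K(\mathfrak{p}))$, while for $m\equiv 1\bmod p-1$ the surjectivity is unconditional by Proposition~\ref{prp:surj_(1,p)}. The real work is the reverse implication, which I propose to prove by contrapositive.

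So assume $p\mid h(K(\mathfrak{p}))$ and set $G:=\mathrm{Gal}(K(\mathfrak{p})/K)$, whose order $(p-1)/w$ is coprime to $p$. Maschke's theorem ensures that every $\mathbb{F}_p[G]$-module splits cleanly into isotypic components indexed by characters $\psi\colon G\to\mathbb{F}_p^\times$. Theorem~\ref{thm:Rubin} translates our hypothesis into $p\mid [E_{(1,0)}:C(K(\mathfrak{p}))]$, so the image of $C(K(\mathfrak{p}))$ in $E_{(1,0)}/E_{(1,0)}^p$ is a proper $\mathbb{F}_p[G]$-submodule, and consequently at least one isotypic component is deficient.

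I then identify the isotypic structure of $E_{(1,0)}/E_{(1,0)}^p$: since $K(\mathfrak{p})$ is totally imaginary over the imaginary quadratic field $K$ and the archimedean places of $K(\mathfrak{p})$ form a single $G$-orbit, Dirichlet's unit theorem combined with Maschke gives $E_{(1,0)}\otimes\mathbb{Q}\cong \mathbb{Q}[G]/\mathbb{Q}$; over $\mathbb{F}_p$ each nontrivial isotypic component of $E_{(1,0)}/E_{(1,0)}^p$ is therefore one-dimensional, while the trivial component vanishes since $p\geq 5>w$ makes $\mu(K)/p$ trivial. A possible contribution from $\mu_p\subset K(\mathfrak{p})$ would already be absorbed into $C(K(\mathfrak{p}))$ because $\mu(K(\mathfrak{p}))\subset C(K(\mathfrak{p}))$ by Definition~\ref{dfn:ell_unit_R}, so only the free part of the isotypic decomposition matters for the comparison. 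Hence the deficient $\psi$ must be nontrivial; and as $m$ varies in $\{m\geq 2:m\equiv 1\bmod w\}$ modulo $p-1$, the assignment $m\mapsto\chi_1^{1-m}|_G$ is a bijection onto $\widehat{G}$ sending $m\equiv 1\bmod p-1$ to the trivial character, so we can write $\psi=\chi_1^{1-m}|_G$ for a unique $m$ with $(m,1)\in I$ and $m\not\equiv 1\bmod p-1$.

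For this $m$, the argument of Proposition~\ref{prp:surj_edge}(1) shows that the $\chi_1^{1-m}$-isotypic component of $C(K(\mathfrak{p}))$ in $E_{(1,0)}/E_{(1,0)}^p$ is the line spanned by $\epsilon_{(m,1),1,\mathfrak{a}}$ for any $\mathfrak{a}$ prime to $6\mathfrak{p}$ with $N\mathfrak{a}-\chi_1^{1-m}(\sigma_{\mathfrak{a}})\not\equiv 0\bmod p$; the deficiency therefore forces $\epsilon_{(m,1),1,\mathfrak{a}}\in E_{(1,0)}^p$, which by the Case~4 discussion at the beginning of Section~\ref{5} renders $\kappa_{(m,1)}\bmod p$ trivial and hence $\kappa_{(m,1)}$ non-surjective, contradicting (1). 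The analogous equivalence for $K(\bar{\mathfrak{p}})$ follows by symmetry. The main obstacle in this plan is the careful $G$-module bookkeeping for $E_{(1,0)}/E_{(1,0)}^p$---in particular the exceptional case $\mu_p\subset K(\mathfrak{p})$ (which can occur when $w=2$ and $K\subset\mathbb{Q}(\mu_p)$)---and this is handled by absorbing the $\mu$-contribution into $C(K(\mathfrak{p}))$.
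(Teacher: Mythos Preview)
Your proposal is correct and follows essentially the same approach as the paper. The paper's proof is extremely terse---it simply observes that $(2)\Rightarrow(1)$ follows from Propositions~\ref{prp:surj_(1,p)} and~\ref{prp:surj_edge}, and that $(1)\Rightarrow(2)$ follows because the argument of Section~\ref{5.4} shows the image of $C(K(\mathfrak{p}))$ generates $E_{(1,0)}/E_{(1,0)}^p$, whence Theorem~\ref{thm:Rubin} gives $p\nmid h(K(\mathfrak{p}))$. Your contrapositive argument unpacks exactly this, making the isotypic bookkeeping explicit. One remark: your concern about $\mu_p\subset K(\mathfrak{p})$ is in fact vacuous, since $K(\mu_p)/K$ is ramified at $\bar{\mathfrak{p}}$ (because $K_{\bar{\mathfrak{p}}}=\mathbb{Q}_p$ and $\mathbb{Q}_p(\mu_p)/\mathbb{Q}_p$ is totally ramified) while $K(\mathfrak{p})/K$ is unramified there; so $\mu_p\not\subset K(\mathfrak{p})$ and the extra case never arises.
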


\begin{proof}
 $(2) \Rightarrow (1)$ : Propositions \ref{surj_(1,p)}-\ref{surj_edge}.
 $(1) \Rightarrow (2)$ : The proof of Proposition \ref{surj_edge} shows that $\chi_{1}^{1-m}$-isotypic component of 
  \[
  C(K(\p))/\left( C(K(\p)) \cap E_{(1,0)}^{p} \right) \subset K(\p)^{\times}/K(\p)^{\times \p}
  \] is one-dimensional for every $(m,1) \in I$ such that $m \not \equiv 1 \bmod p-1$. Hence we have
  \[
    C(K(\p))/\left( C(K(\p)) \cap E_{(1,0)}^{p} \right)=E_{(1,0)}/E_{(1,0)}^{p}
  \] by Dirichlet's unit theorem. By Theorem \ref{Rubin_cl}, the class number of $K(\p)$ is prime to $p$ as desired. 
\end{proof}

{\bf Acknowledgements.} 
The author would like to thank Professor Akio Tamagawa and Benjamin Collas for their helpful comments and discussions. The author also would like to express his gratitude to the referees for valuable suggestions to improve the content of this paper. This work is supported by JSPS KAKENHI Grand Number 23KJ1882.

\bibliographystyle{amsplain}
\bibliography{References}

\end{document}